\documentclass[12pt]{amsart}
\usepackage{amssymb}
\usepackage{enumerate}

\makeatletter
\@namedef{subjclassname@2020}{%
  \textup{2020} Mathematics Subject Classification}
\makeatother

\newtheorem{theorem}{Theorem}[section]
\newtheorem{corollary}[theorem]{Corollary}
\newtheorem{lemma}[theorem]{Lemma}
\newtheorem{proposition}[theorem]{Proposition}

\newtheorem{conjecture}[equation]{Conjecture}

\theoremstyle{definition}
\newtheorem{definition}[theorem]{Definition}
\newtheorem{remark}[theorem]{Remark}

\frenchspacing

\textwidth=13.5cm
\textheight=23cm
\parindent=16pt
\oddsidemargin=-0.5cm
\evensidemargin=-0.5cm
\topmargin=-0.5cm

%
%

\newcommand{\ud}[0]{\,\mathrm{d}}

\newcommand{\abs}[1]{|#1|}

\newcommand{\Norm}[2]{\|#1\|_{#2}}

\newcommand{\ave}[1]{\langle #1\rangle}


\newcommand{\BMO}[0]{\operatorname{BMO}}
\newcommand{\CMO}[0]{\operatorname{CMO}}

\newcommand{\supp}[0]{\operatorname{supp}}
\newcommand{\loc}[0]{\operatorname{loc}}

\newcommand{\testi}{{\mathcal S}}


\newcommand{\R}{\mathbb{R}}
\newcommand{\C}{\mathbb{C}}


\newcommand{\eps}[0]{\varepsilon}



\begin{document}

\baselineskip=17pt

\title[Extrapolation of compactness]{Extrapolation of compactness on\\ weighted Morrey spaces}

\author[S. Lappas]{Stefanos Lappas}
\address{Department of Mathematics and Statistics\\
P.O.Box~68 (Pietari Kalmin katu~5) FI-00014\\ 
University of Helsinki\\ Finland}
\email{stefanos.lappas@helsinki.fi}

\date{}

\begin{abstract}
In a previous work, ``compact versions'' of Rubio de Francia's weighted extrapolation theorem were proved, which allow one to extrapolate the compactness of an linear operator from just one space to the full range of weighted Lebesgue spaces, where this operator is bounded. In this paper, we extend these results to the setting of weighted Morrey spaces. As applications, we easily obtain new results on the weighted compactness of commutators of Calder\'on--Zygmund singular integrals, rough singular integrals and Bochner--Riesz multipliers.
\end{abstract}

\subjclass[2020]{Primary 47B38; Secondary 42B20, 42B35, 46B70}

\keywords{Weighted extrapolation, compact operator, singular integral, commutator, Muckenhoupt weight, Bochner--Riesz multiplier}

\maketitle

\section{Introduction}

We refer to a locally integrable positive almost everywhere function $w$ on $\R^d$ as a weight and we define the weighted Lebesgue and Morrey spaces as follows: 

\begin{definition}
Let $1\leq p<\infty$ and $w$ be a weight. Then a weighted Lebesgue space is defined by
\begin{equation*}
  L^p(w):=\Big\{f:\R^d\to\C\text{ measurable }\Big|\ \Norm{f}{L^p(w)}:=\Big(\int_{\R^d}\abs{f}^p w\Big)^{1/p}<\infty\Big\}.
\end{equation*}
\end{definition}

\begin{definition}[\cite{Samko}]\label{weighted Morrey space}
Let $1\leq p<\infty$, $0<\lambda<d$ and $w$ be a weight. Then the Samko type weighted Morrey space is defined by
\begin{equation*}
  \mathcal L^{p,\lambda}(w):=\Big\{f\in L^p_{loc}(w)\Big|\ \Norm{f}{\mathcal L^{p,\lambda}(w)}:=\sup_Q |Q|^{-\frac{\lambda}{dp}}\Big(\int_{Q}\abs{f}^p w\Big)^{1/p}<\infty\Big\},
\end{equation*}
where the supremum is taken over all cubes $Q\subset\R^d$.
\end{definition}

\begin{remark}
Alternatively, we could define the Samko type weighted Morrey spaces with balls instead of cubes. If $w\equiv 1$, then $\mathcal L^{p,\lambda}(w)=\mathcal L^{p,\lambda}(\R^d)$, where $\mathcal L^{p,\lambda}(\R^d)$ is the classical Morrey space (see \cite{Morrey}).
\end{remark}

As we will work with Muckenhoupt weight characteristics, we recall the following definitions: 

\begin{definition}[\cite{Gehring, Muckenhoupt:Ap}]\label{def:Muchenhoupt weights}
A weight $w\in L_{\loc}^1(\R^d)$ is called a Muckenhoupt $A_p(\R^d)$ weight (or $w\in A_p(\R^d)$) if 
\begin{equation*}
\begin{split}
  &[w]_{A_p}:=\sup_Q\ave{w}_Q\ave{w^{-\frac{1}{p-1}}}_Q^{p-1}<\infty,\qquad 1<p<\infty, \\
  &[w]_{A_1}:=\sup_Q\ave{w}_Q\Norm{w^{-1}}{L^\infty(Q)}<\infty,\qquad p=1,
\end{split}
\end{equation*}
where the supremum is taken over all cubes $Q\subset\R^d$ and $\ave{w}_Q:=\abs{Q}^{-1}\int_Q w$. 

A weight $w$ is said to belong to the reverse H\"older class $RH_{\sigma}(\R^d)$ (or $w\in RH_{\sigma}(\R^d)$) if
\begin{equation*}
\begin{split}
  &[w]_{RH_\sigma}:=\sup_Q\ave{w^{\sigma}}_Q^{1/\sigma}\ave{w}_Q^{-1}<\infty,\qquad 1<\sigma<\infty,  \\
  &[w]_{RH_\infty}:=\sup_Q\Norm{w}{L^\infty(Q)}\ave{w}_Q^{-1}<\infty,\qquad \sigma=\infty.
\end{split}
\end{equation*}
\end{definition}

The classes $RH_{\sigma}(\R^d)$ and $A_p(\R^d)$ were introduced to study the {\em $L^p$-integra\-bility} of the partial derivatives of a quasiconformal mapping and the weighted norm inequalities for {\em Hardy--Littlewood maximal function}, respectively; see \cite{Gehring, Muckenhoupt:Ap}.

The following theorem of Rubio de Francia \cite{Rubio:factorAp} (see also the work of Auscher--Martell \cite{AM}) on the extrapolation of {\em boundedness} on weighted spaces is one of the most important tools of modern harmonic analysis:

\begin{theorem}[\cite{AM}, Theorem 4.9 and \cite{Rubio:factorAp}]\label{thm:limited range extrp.}
Let $1\leq p_{-}<p_{+}\leq\infty$, and $T$ be a linear operator simultaneously defined and bounded on $L^{p_1}(\tilde w)$ for {\bf some} $p_1\in[p_{-},p_{+}]$ and {\bf all} $\tilde w\in A_{p_1/p_{-}}(\R^d)\cap RH_{(p_{+}/p_1)'}(\R^d)$. Then $T$ is also defined and bounded on $L^p(w)$ for all $p\in(p_{-},p_{+})$ and all $w\in A_{p/p_{-}}(\R^d)\cap RH_{(p_{+}/p)'}(\R^d)$.
\end{theorem}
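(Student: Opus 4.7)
The plan is to follow the classical Rubio de Francia strategy --- duality combined with an iteration algorithm --- adapted to the limited-range weight class $A_{p/p_{-}} \cap RH_{(p_{+}/p)'}$. Fix $p \in (p_{-}, p_{+})$ and $w \in A_{p/p_{-}}(\R^d) \cap RH_{(p_{+}/p)'}(\R^d)$, and let $p_1$ be the exponent for which the hypothesis is available. By a symmetric argument one can assume $p > p_1$ (the reverse case is handled by dualizing $T$ suitably), so that H\"older-type duality on $L^p(w)$ at the rescaling exponent $p/p_1$ gives
\[
\|Tf\|_{L^p(w)}^{p_1} \;=\; \sup_{h\geq 0,\, \|h\|_{L^{(p/p_1)'}(w)}\leq 1}\; \int |Tf|^{p_1}\, h\, w \, dx,
\]
reducing everything to an integral estimate at exponent $p_1$ against a variable density $h\,w$.

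The second step is to manufacture from $h$ a weight $\tilde w \in A_{p_1/p_{-}} \cap RH_{(p_{+}/p_1)'}$ against which one may invoke the hypothesis, while retaining $h\,w$ controlled by $\tilde w$ up to admissible powers. This is the role of a Rubio de Francia iteration
\[
Rh := \sum_{k=0}^{\infty} \frac{M^k h}{(2\|M\|)^k},
\]
where $M$ is \emph{not} the bare Hardy--Littlewood maximal operator but a variant $M_r g := (M(|g|^r))^{1/r}$, with $r$ chosen so that $M_r$ is bounded on $L^{(p/p_1)'}(w)$ and, equally importantly, so that its iterates lie in the correct weight class after rescaling. The output then satisfies $h\leq Rh$, $\|Rh\|_{L^{(p/p_1)'}(w)}\leq 2\|h\|_{L^{(p/p_1)'}(w)}$, and pairing with $w$ through the rescaling produces a $\tilde w$ in the desired class with quantitatively controlled characteristic.

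The main obstacle is precisely this correspondence between the iteration operator and the target weight class: one must verify that iterates of $M_r$ produce weights in $A_{p_1/p_{-}} \cap RH_{(p_{+}/p_1)'}$ with controlled constants. The cleanest bridge is the Johnson--Neugebauer equivalence
\[
w \in A_q \cap RH_s \;\iff\; w^s \in A_{s(q-1)+1},
\]
which converts the intersection condition into a pure $A_q$ condition on a power of $w$ and reduces matters to the Coifman--Rochberg characterisation of $A_1$ weights as powers of maximal functions, where the classical algorithm applies. Once this correspondence is in place, applying the hypothesized $L^{p_1}(\tilde w)$-boundedness of $T$ and unwinding the H\"older estimate closes the argument; throughout, the delicate point is tracking the H\"older exponents and the weight characteristics through the various rescalings, which is the place where the limited-range proof departs materially from the classical $A_p$ version.
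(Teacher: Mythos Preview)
The paper does not prove this theorem at all: it is quoted as a known result from \cite{AM} (their Theorem~4.9) and \cite{Rubio:factorAp}, and is used only as background to contrast with the compactness extrapolation in Theorem~\ref{thm:limited range extrp.compact}. So there is no ``paper's own proof'' against which to compare.

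That said, your sketch is a faithful outline of the Auscher--Martell argument: the duality reduction at exponent $p/p_1$, the Rubio de Francia iteration built from a rescaled maximal operator, and the use of the Johnson--Neugebauer equivalence $w\in A_q\cap RH_s \iff w^s\in A_{s(q-1)+1}$ to translate the intersection class into a single $A_q$ condition are exactly the ingredients used in \cite{AM}. Two places would need tightening if you were to turn this into a self-contained proof. First, the phrase ``the reverse case is handled by dualizing $T$ suitably'' is delicate: for an abstract linear $T$ one does not have a formal adjoint, and the standard way around this (as in \cite{CUMP:book}) is to work with pairs $(f,g)$ rather than operators, or to dualize at the level of the norm inequality rather than the operator. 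Second, the parameter $r$ in $M_r$ and the verification that $(Rh)\cdot w$ (raised to the appropriate power) lands in $A_{p_1/p_-}\cap RH_{(p_+/p_1)'}$ with uniformly bounded constants is the crux of the limited-range argument, and your sketch acknowledges but does not carry out this computation. These are genuine details rather than gaps in the strategy.
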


\begin{remark}
We don't seem to have an analogue of Rubio de Francia's extrapolation theorem on weighted Morrey spaces.
\end{remark}

In \cite[Theorem 3.31]{CUMP:book}, a version of Theorem \ref{thm:limited range extrp.} is stated in terms of non-negative measurable pairs of functions $(f,g)$. This means that one does not need to work with specific operators since nothing about the operators themselves is used (like linearity or sublinearity) and they play no role. However, we work with the pairs $(|f|,|Tf|)$, where $T$ is a linear operator, since the abstract compactness results that we will use in order to prove Theorem \ref{thm:main} below hold for linear operators (see Theorem \ref{thm:CwKa} of Cwikel--Kalton and Theorem \ref{thm:CR} of Cwikel--Rochberg).

In the recent paper \cite{HL}, the authors provided the following version
for extrapolation of {\em compactness}: (See also \cite{COY,HL2020} for extensions to multilinear operators.)

\begin{theorem}[\cite{HL}, Theorems 1.3 and 2.4]\label{thm:limited range extrp.compact}
In the setting of Theorem \ref{thm:limited range extrp.}, suppose in addition that $T$ is compact on $L^{p_1}(w_1)$ for {\bf some} $w_1\in A_{p_1/p_{-}}(\R^d)\cap RH_{(p_{+}/p_1)'}(\R^d)$. Then $T$ is compact on $L^{p}(w)$ for all $p\in(p_{-},p_{+})$ and all $w\in A_{p/p_{-}}(\R^d)\cap RH_{(p_{+}/p)'}(\R^d)$.
\end{theorem}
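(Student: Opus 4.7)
The strategy is to reduce extrapolation of compactness to interpolation of compactness via the Stein--Weiss identification of weighted Lebesgue spaces as complex interpolation spaces, combined with Theorem~\ref{thm:limited range extrp.} for boundedness and either Theorem~\ref{thm:CwKa} (Cwikel--Kalton) or Theorem~\ref{thm:CR} (Cwikel--Rochberg) for the passage from boundedness on one endpoint and compactness on the other to compactness on the whole interpolation scale.

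Fix a target pair $(p,w)$ with $p\in(p_-,p_+)$ and $w\in A_{p/p_-}(\R^d)\cap RH_{(p_+/p)'}(\R^d)$; we wish to prove that $T$ is compact on $L^p(w)$. The plan is to exhibit a second pair $(p_0,w_0)$ in the same extrapolation class, and some $\theta\in(0,1)$, satisfying
\begin{equation*}
\frac{1}{p}=\frac{1-\theta}{p_0}+\frac{\theta}{p_1},\qquad w^{1/p}=w_0^{(1-\theta)/p_0}\,w_1^{\theta/p_1}.
\end{equation*}
These relations are exactly the complex interpolation identity $L^p(w)=[L^{p_0}(w_0),L^{p_1}(w_1)]_\theta$. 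I would first choose $\theta>0$ small, which forces $p_0$ close to $p$, and then solve the second equation for $w_0$ as $w_0=\bigl(w^{1/p}w_1^{-\theta/p_1}\bigr)^{p_0/(1-\theta)}$. By the openness of the Muckenhoupt and reverse H\"older classes under small perturbations of both exponent and weight, i.e.\ the self-improvement results of Coifman--Fefferman (for $A_r$) and Gehring (for $RH_\sigma$), $w_0$ lies in $A_{p_0/p_-}(\R^d)\cap RH_{(p_+/p_0)'}(\R^d)$ provided $\theta$ is taken small enough.

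Once $(p_0,w_0)$ is in hand, Theorem~\ref{thm:limited range extrp.} delivers boundedness of $T$ on $L^{p_0}(w_0)$, while compactness on $L^{p_1}(w_1)$ is assumed. Applying Cwikel--Kalton or Cwikel--Rochberg to the complex interpolation couple $\bigl(L^{p_0}(w_0),L^{p_1}(w_1)\bigr)$ then yields compactness of $T$ on $L^p(w)$ as the interpolate at parameter $\theta$, which is the desired conclusion.

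The main obstacle I anticipate is the quantitative construction of the auxiliary pair: one must verify that the algebraically forced weight $w_0$ really satisfies both the Muckenhoupt and the reverse H\"older bound relative to the perturbed exponent $p_0$, uniformly in the required smallness of $\theta$. This is essentially a lemma about the stability of the class $A_{r}\cap RH_s$ under fractional products of weights from neighboring classes; once such a lemma is in place, the rest of the proof is a clean application of interpolation of compactness for linear operators on the complex interpolation scale of weighted Lebesgue spaces.
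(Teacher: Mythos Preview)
Your proposal is correct and matches the approach of \cite{HL} (from which this theorem is quoted) as well as the paper's own proof of the Morrey analogue, Theorem~\ref{thm:main}: the stability lemma you anticipate for the auxiliary pair $(p_0,w_0)$ is precisely Lemma~\ref{lem:main} (quoted from \cite[Lemma~4.9]{HL}), after which Stein--Weiss interpolation and Cwikel--Kalton (Theorem~\ref{thm:CwKa}, any of conditions \eqref{it:UMD}--\eqref{it:lattice} working in the Lebesgue setting) finish the argument. One small remark: Cwikel--Rochberg (Theorem~\ref{thm:CR}) is not an alternative to Cwikel--Kalton here but rather a supplementary step needed only when $[\cdot,\cdot]_\theta\neq[\cdot,\cdot]^\theta$, which does not occur for weighted Lebesgue spaces.
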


In this paper, we extend Theorem \ref{thm:limited range extrp.compact} to the setting of weighted Morrey space. In particular, we obtain the following:

\begin{theorem}\label{thm:main}
Let $0<\lambda<d$, $1\leq p_{-}<p_{+}\leq\infty$ and $T$ be a linear operator simultaneously defined and bounded on $\mathcal L^{p,\lambda}(w)$ for {\bf all} $p\in(p_{-},p_{+})$ and {\bf all} $w\in A_{p/p_{-}}(\R^d)\cap RH_{(p_{+}/p)'}(\R^d)$. Suppose in addition that $T$ is compact on $\mathcal L^{p_1,\lambda}(w_1)$ for {\bf some}  $p_1\in[p_{-},p_{+}]$ and {\bf some} $w_1\in A_{p_1/p_{-}}(\R^d)\cap RH_{(p_{+}/p_1)'}(\R^d)$. Then $T$ is compact on $\mathcal L^{p,\lambda}(w)$ for all $p\in(p_{-},p_{+})$ and all $w\in A_{p/p_{-}}(\R^d)\cap RH_{(p_{+}/p)'}(\R^d)$. 
\end{theorem}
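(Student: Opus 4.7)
The natural plan is to run an interpolation-of-compactness argument on the weighted Morrey scale $\{\mathcal L^{q,\lambda}(v)\}$ with the Morrey exponent $\lambda$ held fixed, mimicking the blueprint of \cite{HL}. Fix a target admissible pair $(p,w)$ with $p \in (p_-,p_+)$ and $w \in A_{p/p_-}(\R^d) \cap RH_{(p_+/p)'}(\R^d)$. I would first locate an auxiliary admissible pair $(p_2,w_2)$, possibly with $p_2$ on the opposite side of $p$ from $p_1$, such that the weight factorization $w^{1/p} = (w_1^{1/p_1})^{1-\theta} (w_2^{1/p_2})^\theta$ holds for some $\theta \in (0,1)$. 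This is the standard Muckenhoupt factorization step familiar from Rubio de Francia's theorem: the openness of $A_q(\R^d) \cap RH_\sigma(\R^d)$ under small perturbations of the exponents, together with the convex-combination structure of these classes, should make such a choice available. A preliminary reduction may be needed if $p_1 \in \{p_-,p_+\}$, using the assumed full-range boundedness plus one iteration of the compactness step below to move the compactness point into the open interval.

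Next I would invoke an interpolation identity for Samko-type weighted Morrey spaces, identifying $\mathcal L^{p,\lambda}(w)$ with the $\theta$-interpolation space of the couple $(\mathcal L^{p_1,\lambda}(w_1), \mathcal L^{p_2,\lambda}(w_2))$. Applying the Cwikel--Kalton theorem (for the real $K$-method) or the Cwikel--Rochberg theorem (for the complex method) to the operator $T$, which is bounded on both endpoints and compact on the first, would then yield compactness on $\mathcal L^{p,\lambda}(w)$. Since $(p,w)$ was an arbitrary admissible pair, this gives the full conclusion.

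The main obstacle I expect is establishing the interpolation identity for weighted Morrey spaces. For \emph{unweighted} Morrey couples it is classical that the first complex interpolation functor does not reproduce the Morrey space in general --- the intermediate space is typically strictly larger --- and the weighted refinement only complicates matters. The resolution will presumably come from using the real $K$-method, Calder\'on's second complex method, or a block-type identity adapted to Samko-type spaces, combined with a careful tracking of the $A_q \cap RH_\sigma$ constraint across the interpolation. A secondary concern is verifying the technical hypotheses of Cwikel--Kalton or Cwikel--Rochberg (for instance that the relevant couples are Gagliardo complete or that a suitable approximation property is available) in the Morrey setting, though these should reduce to standard checks analogous to those carried out in \cite{HL}.
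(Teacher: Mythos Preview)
Your high-level blueprint---weight factorization to locate an auxiliary endpoint, followed by interpolation of compactness---matches the paper exactly, and you have correctly anticipated the central obstacle: Calder\'on's first complex functor $[\cdot,\cdot]_\theta$ does \emph{not} recover the full Morrey space $\mathcal L^{p,\lambda}(w)$, only the proper subspace of functions satisfying condition~\eqref{main cond.}. You also correctly name Calder\'on's second method $[\cdot,\cdot]^\theta$ as a way around this, since (by Theorem~\ref{thm:HNS}) it \emph{does} yield the full space.

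The gap is in how you propose to deploy the Cwikel--Kalton and Cwikel--Rochberg theorems. You present them as interchangeable alternatives (``for the real $K$-method'' versus ``for the complex method''), but the paper uses them \emph{in tandem}, and neither alone suffices. Cwikel--Kalton (Theorem~\ref{thm:CwKa}) delivers compactness only on $[X_0,X_1]_\theta$, the first complex space---which here is the proper subspace, not the target. Cwikel--Rochberg (Theorem~\ref{thm:CR}) is not an independent compactness interpolation theorem at all; it is a bridging result stating that compactness on $[X_0,X_1]_\theta$ upgrades to compactness \emph{from} $[X_0,X_1]^\theta$ \emph{into} $[Y_0,Y_1]_\theta$. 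Since $[X_0,X_1]^\theta = \mathcal L^{p,\lambda}(w)$ and $[Y_0,Y_1]_\theta \hookrightarrow \mathcal L^{p,\lambda}(w)$, this two-step combination is what yields compactness on the full Morrey space. Your proposal, as written, would stall after the first step with compactness only on a subspace.

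A second, smaller gap: the side conditions you suggest for Cwikel--Kalton (Gagliardo completeness, approximation properties) are not the ones that work here. Morrey spaces fail UMD and reflexivity, so conditions~\eqref{it:UMD}--\eqref{it:Yinterm} of Theorem~\ref{thm:CwKa} are unavailable; the paper uses condition~\eqref{it:lattice}, that $X_0,X_1$ are complexified Banach lattices of measurable functions on a common measure space, which is immediate for $\mathcal L^{p_j,\lambda}(w_j)$ (Lemma~\ref{lem:lemOk}).
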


\begin{remark}
Theorem \ref{thm:main} remains true in the case $p_{+}=\infty$, provided that $p_1<\infty$. Thus the reverse H\"older conditions on $w,w_1$ are vacuous. Due to Theorem \ref{thm:HNS} below, it seems that our results don't apply to other type of weighted Morrey spaces such as the Komori--Shirai type weighted Morrey space considered in \cite{KS}. In addition, notice that if we let $\lambda\to 0$ in Definition \ref{weighted Morrey space}, then $\mathcal {L}^{p,0}(w)\equiv L^p(w)$ and hence Theorem \ref{thm:main} formally recovers Theorem \ref{thm:limited range extrp.compact}.
\end{remark}

When $w_1\equiv1$, Theorem \ref{thm:main} says that we can obtain weighted compactness if the weighted boundedness and unweighted compactness are already known. This case is relevant to all our applications in Sections \ref{sec:4}--\ref{sec:7}.

The paper is organized as follows: in Section \ref{sec:main results} we collect some previously known results from which the proof of Theorem \ref{thm:main} will follow in Section \ref{sec:3}. In Sections \ref{sec:4}--\ref{sec:7} we provide several applications of our main result. An example of these applications to commutators of {\em Calder\'on--Zygmund operators} is the following (we refer to Sections \ref{sec:4} and \ref{sec:5} for the notions of $\CMO(\R^d)$ and Calder\'on--Zygmund operators, respectively):

\begin{theorem}\label{thm:app2}
Let $b\in\CMO(\R^d)$, $T$ be a Calder\'on--Zygmund operator that extends boundedly to $L^2(\R^d)$ and satisfies for all $f\in C_c^\infty(\R^d)$ the condition $Tf(x)=\lim_{\eps\to 0}\int_{|x-y|\ge\eps}K(x,y)f(y)dy$ a.e. $x\in\R^d$. Then the commutator $[b,T]$ is compact on $\mathcal{L}^{p,\lambda}(w)$ for all $w\in A_{s}(\R^d)\cap RH_{t}(\R^d)$ and all $p,\lambda,s,t$ such that 
\begin{equation*}
  p\in(1,\infty),\quad 0<\lambda<d, \quad s\in\bigg[1,\min\bigg\{p,\frac{d}{\lambda}\bigg\}\bigg], \quad t\in\bigg(\bigg(\frac{d}{s\lambda}\bigg)',\infty\bigg).  
\end{equation*}
\end{theorem}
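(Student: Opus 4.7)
The plan is to deduce Theorem \ref{thm:app2} from Theorem \ref{thm:main} by choosing the extrapolation endpoints $p_-,p_+$ in terms of the given $s,t$. Fix an admissible tuple $(p,\lambda,s,t)$ and a weight $w\in A_s(\R^d)\cap RH_t(\R^d)$, and set $p_-:=p/s$ and $p_+:=pt/(t-1)$. Then $s=p/p_-$ and $t=(p_+/p)'$, so the hypothesis $w\in A_s\cap RH_t$ rewrites as $w\in A_{p/p_-}(\R^d)\cap RH_{(p_+/p)'}(\R^d)$. The constraints $s\in[1,\min\{p,d/\lambda\}]$ and $t>(d/(s\lambda))'$ translate into $1\le p_-\le p<p_+$ with $p_+\le pd/(s\lambda)$, confirming $p\in(p_-,p_+)\subseteq(1,\infty)$ and placing us in the setup of Theorem \ref{thm:main}.

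With these parameters fixed, I would verify the two hypotheses of Theorem \ref{thm:main} applied to the linear operator $[b,T]$. For the boundedness hypothesis, $[b,T]$ must be bounded on $\mathcal{L}^{q,\lambda}(v)$ for every $q\in(p_-,p_+)$ and every $v\in A_{q/p_-}(\R^d)\cap RH_{(p_+/q)'}(\R^d)$. Since $b\in\CMO(\R^d)\subset\BMO(\R^d)$ and $T$ is a Calder\'on--Zygmund operator, such weighted-Morrey boundedness of $[b,T]$ is a known result; I would invoke it either from the literature on Samko-type weighted Morrey spaces or from an earlier section of the paper. For the compactness seed, I would choose $w_1\equiv 1$, which lies in $A_1(\R^d)\cap RH_\infty(\R^d)$ and hence in $A_{p_1/p_-}(\R^d)\cap RH_{(p_+/p_1)'}(\R^d)$ for any $p_1\in(p_-,p_+)$. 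The compactness of $[b,T]$ on the unweighted Morrey space $\mathcal{L}^{p_1,\lambda}(\R^d)$ for $b\in\CMO$ is classical, established by approximating $b$ in the BMO norm by smooth compactly supported functions and applying a Fr\'echet--Kolmogorov-type criterion adapted to the Morrey setting.

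With both hypotheses in place, a direct application of Theorem \ref{thm:main} to $[b,T]$ yields the desired compactness on $\mathcal{L}^{p,\lambda}(w)$, completing the proof. The main obstacle is not the extrapolation step itself, which is immediate once the parameters are set, but rather locating the prerequisite weighted-Morrey boundedness in the full one-parameter family $(q,v)$ demanded by the extrapolation hypothesis, rather than only at the single target space. This is, however, the standard input to any extrapolation theorem; in our case it is covered by existing results on commutators of Calder\'on--Zygmund operators with BMO symbols acting on Samko-type weighted Morrey spaces, and in particular it also handles the boundary case $p_+=\infty$ mentioned in the remark after Theorem \ref{thm:main}.
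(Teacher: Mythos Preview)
Your strategy is the paper's: apply Theorem~\ref{thm:main} to $[b,T]$ with the unweighted compactness seed (the paper's Theorem~\ref{thm:AN}, due to Arai--Nakai) and weighted Morrey boundedness, which the paper obtains explicitly by chaining Coifman--Fefferman (Theorem~\ref{thm:CF}), the commutator bound of \'Alvarez--Bagby--Kurtz--P\'erez (Theorem~\ref{thm:commuBasic}), and the Duoandikoetxea--Rosenthal extrapolation to weighted Morrey spaces (Theorem~\ref{thm:extr. bdd. 1}); all of this is packaged as Corollary~\ref{cor:commuBasic 1} and Lemma~\ref{app2}.

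Two technical points where the paper is more careful than your sketch. First, your choice $p_-=p/s$ gives $p_-=p$ at the admissible endpoint $s=1$, so $p\notin(p_-,p_+)$ and Theorem~\ref{thm:main} does not apply there; the paper instead allows a range $\sigma\in\big((d/\lambda)',(t's)'\big]$, $p_+\in[t'p,\sigma'p/s]$, $p_-=\max\{1,p_+/\sigma'\}$, which keeps $p_-<p$ throughout and still yields $p/p_-\ge s$, $p_+/p\ge t'$ so that $A_s\cap RH_t\subset A_{p/p_-}\cap RH_{(p_+/p)'}$. Second, the weighted Morrey boundedness you invoke as a ``known result'' is supplied by Theorem~\ref{thm:extr. bdd. 1} only under the constraint $\lambda<d-d/\sigma$ tied to the reverse-H\"older exponent of the weight; the paper's parameter choices are arranged precisely so that this is met for every $(q,v)$ in the family required by Theorem~\ref{thm:main}, whereas your proposal defers this verification.
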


See also Theorems \ref{thm:app3} and \ref{thm:app4} for similar results on {\em rough singular integrals} and {\em Bochner--Riesz multipliers}. Although compactness of such operators on the unweighted Morrey spaces has been considered in the literature, obtaining compactness results on weighted Morrey spaces appears to be entirely new altogether.

\subsection*{Notation} Throughout the paper, we denote by $C$ a positive constant which is independent of the main parameters but it may change at each occurrence, and we write $f\lesssim g$ if $f\leq Cg$. The term cube will always refer to a cube $Q\subset\R^d$ and $|Q|$ will denote its Lebesgue measure. Recall from Definition \ref{def:Muchenhoupt weights} that $\ave{w}_Q$ denotes $\abs{Q}^{-1}\int_Q w$, the average of $w$ over $Q$, and $p'$ is the conjugate exponent to $p$, that is $p':=p/(p-1)$.

\section{Preliminaries}\label{sec:main results}

We collect the results from which the proof of Theorem \ref{thm:main} will follow in Section \ref{sec:3}. 

Let $S:=\{z\in\C:0<Re(z)<1\}$ and $\bar{S}:=\{z\in\C:0\leq Re(z)\leq 1\}$. 
Following \cite{Calderon, HNS}, we recall the following definitions of two complex interpolation functors: 

\begin{definition}[Calder\'on's first complex interpolation space] Let $(X_0,X_1)$ be a compatible couple of Banach spaces.
\begin{enumerate}[1.]
  \item Define $\mathcal{F}(X_0,X_1)$ as the set of all functions $F:\bar{S}\to X_0+X_1$ such that
  \begin{enumerate}[(a)]
  \item $F$ is continuous on $\bar{S}$ and $\sup_{z\in\bar{S}}\|F(z)\|_{X_0+X_1}<\infty$,
  \item $F$ is holomorphic on $S$,
  \item the functions $t\in\R\mapsto F(j+it)\in X_j$ are bounded and continous on $\R$ for $j=0,1$.
  \end{enumerate}
The space $\mathcal{F}(X_0,X_1)$ is equipped with the norm
\begin{equation*}
  \|F\|_{\mathcal{F}(X_0,X_1)}:=\max\bigg\{\sup_{t\in\R}\|F(it)\|_{X_0},\;\;\sup_{t\in\R}\|F(1+it)\|_{X_1}\bigg\}.
\end{equation*}
  \item Let $\theta\in(0,1)$. Define the complex interpolation space $[X_0,X_1]_{\theta}$ with respect to $(X_0,X_1)$ to be the set of all functions $x\in X_0+X_1$ such that $x=F(\theta)$ for some $F\in\mathcal{F}(X_0,X_1)$. The norm on $[X_0,X_1]_{\theta}$ is defined by 
  \begin{equation*}
  \|x\|_{[X_0,X_1]_{\theta}}:=\inf\{\|F\|_{\mathcal F(X_0, X_1)}: x=F(\theta)\;\;\text{for some}\;\; F\in\mathcal{F}(X_0,X_1)\}.
  \end{equation*}
\end{enumerate}
\end{definition}

Let $Y$ be a Banach space. We let 
\begin{equation*}
  \text{Lip}(\R,Y):=\Big\{f:\R\to Y\text{ continuous }\Big|\ \Norm{f}{ \text{Lip}(\R,Y)}<\infty\Big\}.
\end{equation*}
where
\begin{equation*}
  \text{Lip}(\R,Y):=\sup_{-\infty<s<t<\infty}\frac{\|f(t)-f(s)\|_Y}{|t-s|}.
\end{equation*}

\begin{definition}[Calder\'on's second complex interpolation space] Suppose that $\bar{X}=(X_0,X_1)$ is a compatible couple of Banach spaces.
\begin{enumerate}[1.]
  \item Define $\mathcal{G}(X_0,X_1)$ as the set of all functions $F:\bar{S}:\to X_0+X_1$ such that
  \begin{enumerate}[(a)]
  \item $F$ is continuous on $\bar{S}$ and $\sup_{z\in \bar{S}}\big\|\frac{F(z)}{1+|z|}\big\|_{X_0+X_1}<\infty$,
  \item $F$ is holomorphic on $S$,
  \item the functions $t\in\R\mapsto F(j+it)\in X_j$ are Lipschitz continuous on $\R$ for $j=0,1$.
  \end{enumerate}
The space $\mathcal{G}(X_0,X_1)$ is equipped with the norm
\begin{equation*}
  \|F\|_{\mathcal{G}(X_0,X_1)}:=\max\{\|F(i\cdot)\|_{\text{Lip}(\R,X_0)},\;\;\|F(1+i\cdot)\|_{\text{Lip}(\R,X_1)}\}.
\end{equation*}
  \item Let $\theta\in(0,1)$. Define the complex interpolation space $[X_0,X_1]^{\theta}$ with respect to $(X_0,X_1)$ to be the set of all functions $x\in X_0+X_1$ such that $x=F'(\theta)$ for some $F\in\mathcal{G}(X_0,X_1)$. The norm on $[X_0,X_1]^{\theta}$ is defined by 
  \begin{equation*}
  \|x\|_{[X_0,X_1]^{\theta}}:=\inf\{\|F\|_{\mathcal G(X_0, X_1)}: x=F'(\theta)\;\;\text{for some}\;\; F\in\mathcal{G}(X_0,X_1)\}.
  \end{equation*}
\end{enumerate}
\end{definition}

Our main abstract tools are the following theorems of Cwikel--Kalton \cite{CwKa} and Cwikel--Rochberg \cite{CR}:

\begin{theorem}[\cite{CwKa}]\label{thm:CwKa}
Let $(X_0,X_1)$ and $(Y_0,Y_1)$ be Banach couples and $T$ be a linear operator such that
$T:X_0+X_1\to Y_0+Y_1$ and $T:X_j\to Y_j$ boundedly for $j=0,1$.
Suppose moreover that $T:X_1\to Y_1$ is compact.
Then also $T:[X_0,X_1]_\theta\to[Y_0,Y_1]_\theta$ is compact for $\theta\in(0,1)$ under {\bf any} of the following four side conditions:
\
\begin{enumerate}
  \item\label{it:UMD} $X_1$ has the UMD (unconditional martingale differences) property,
  \item\label{it:Xinterm} $X_1$ is reflexive, and $X_1=[X_0,E]_\alpha$ for some Banach space $E$ and $\alpha\in(0,1)$,
  \item\label{it:Yinterm} $Y_1=[Y_0,F]_\beta$ for some Banach space $F$ and $\beta\in(0,1)$,
  \item\label{it:lattice} $X_0$ and $X_1$ are both complexified Banach lattices of measurable functions on a common measure space.
\end{enumerate}
\end{theorem}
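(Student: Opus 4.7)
The plan is to exhibit $T$ as the operator-norm limit, in $\mathscr{L}([X_0,X_1]_\theta, [Y_0,Y_1]_\theta)$, of a sequence of compact operators. By the fundamental Calder\'on interpolation inequality
\[
\|S\|_{[X_0,X_1]_\theta \to [Y_0,Y_1]_\theta} \leq \|S\|_{X_0 \to Y_0}^{1-\theta} \|S\|_{X_1 \to Y_1}^{\theta},
\]
it would suffice to construct a sequence $T_n: X_0 + X_1 \to Y_0 + Y_1$ such that (a) $T_n$ is uniformly bounded on both $X_0 \to Y_0$ and $X_1 \to Y_1$, (b) $\|T - T_n\|_{X_1 \to Y_1} \to 0$, and (c) each $T_n$ is compact as a map $[X_0,X_1]_\theta \to [Y_0,Y_1]_\theta$. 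Then (a) and (b) combined with the Calder\'on estimate force $\|T - T_n\|_{[X_0,X_1]_\theta \to [Y_0,Y_1]_\theta} \to 0$, and compactness is preserved under operator-norm limits, yielding the conclusion. Since $T:X_1\to Y_1$ is compact, there exist finite-rank $R_n$ approximating $T$ in $\mathscr{L}(X_1,Y_1)$; the real difficulty is to bend these $R_n$ into $T_n$'s that are also controllable on the $X_0\to Y_0$ side.

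Each of the four side conditions supplies a different mechanism to produce such $T_n$. For (1), the UMD property of $X_1$ provides unconditional Schauder-type decompositions with uniformly bounded partial-sum projections $P_n$; setting $T_n := T\circ P_n$ gives approximations in $\mathscr{L}(X_1,Y_1)$, and analytic continuation into the strip $S$, together with UMD-based vector-valued multiplier estimates, extends $P_n$ to a bounded operator on $X_0$ with controlled norms. For (2) and (3), the reiteration identity $[X_0, [X_0,E]_\alpha]_\theta = [X_0,E]_{\alpha\theta}$ (and its analogue for $Y$) realizes $[X_0,X_1]_\theta$ as an interpolation space of a couple whose ``$1$-endpoint'' is itself an intermediate space; this permits iterated analytic decompositions in two strip variables, so that the interpolation of compactness can be read off from a standard argument applied at a weaker target parameter where the endpoint is no longer extremal. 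For (4), Kalton's truncation operators $\tau_{a,b}(f) := \operatorname{sgn}(f)\cdot(\min(|f|,b)-a)_+$ are simultaneous lattice contractions on $X_0$ and $X_1$; images of the unit ball of $X_0 \cap X_1$ under $f\mapsto T(\tau_{a,b}f)$ form relatively compact sets in $Y_0 + Y_1$, and a truncation-exhaustion argument transfers this to compactness on the interpolation space.

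The main obstacle, as already hinted, is that Cwikel's classical counterexamples show interpolation of compactness can fail without side conditions, so no proof can avoid using some extra structural hypothesis in an essential way. Concretely, the compactness of $T:X_1\to Y_1$ by itself only yields approximations $R_n$ of $T$ in $\mathscr{L}(X_1,Y_1)$, and such $R_n$ need not be bounded $X_0\to Y_0$ at all. Thus the heart of the argument for each side condition is a quantitative construction—via multiplier theory, reiteration, or lattice truncation—of approximating operators that are simultaneously controllable on both endpoints of $(X_0,X_1)$, in a way compatible with the analytic representation of elements of $[X_0,X_1]_\theta$ as boundary values of holomorphic functions on $S$.
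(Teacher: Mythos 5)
The paper does not prove this theorem; it is quoted (with the index roles swapped) from Cwikel--Kalton \cite{CwKa} and used there as a black box, so there is no internal proof to compare against.

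Taken on its own terms, your proposal is an acknowledged outline rather than a proof, and the constructions you sketch for the side conditions are not carried out and in places are not correct as stated. The overall strategy -- exhibit $T$ as the operator-norm limit, as a map $[X_0,X_1]_\theta\to[Y_0,Y_1]_\theta$, of operators compact there, using the Calder\'on norm inequality to transfer $\|T-T_n\|_{X_1\to Y_1}\to 0$ into convergence on the interpolation scale -- is a sound plan, and you correctly isolate the key obstruction: finite-rank approximants of $T$ in the $X_1\to Y_1$ operator norm need not extend to $X_0$ at all. But the actual content of the theorem is precisely the missing construction. For (1), it is not true that UMD gives $X_1$ an ``unconditional Schauder-type decomposition with uniformly bounded partial-sum projections''; not every UMD space even has a Schauder basis. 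In Cwikel--Kalton the UMD hypothesis enters through boundedness of the Riesz projection on $X_1$-valued $L^p$ of the boundary line, i.e.\ it controls partial Fourier sums of the holomorphic Calder\'on representative $F:\bar S\to X_0+X_1$; the decomposition lives in the strip variable, not inside $X_1$. For (2)--(3) you invoke reiteration without indicating how it actually yields compactness; the Cwikel--Kalton proof routes through the Gustavsson--Peetre functor $\langle X_0,X_1\rangle_\theta$, for which one-sided interpolation of compactness is tractable, and then identifies the Calder\'on space with it under the strict-intermediateness assumption. For (4), the truncation heuristic is reasonable, but you neither verify that $\tau_{a,b}$ acts compatibly on both lattices and on the interpolation space nor justify the claimed precompactness of $\{T(\tau_{a,b}f)\}$. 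So the plan identifies the right tension and the right inequality, but none of the four cases is actually proved, and your proposed route is not the one \cite{CwKa} follows.
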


We have swapped the roles of the indices $0$ and $1$ in comparison to \cite{CwKa}. For the UMD property, see \cite[Ch. 4]{HNVW1}.

\begin{theorem}[\cite{CR}, Theorem 2.2]\label{thm:CR}
Let $(X_0,X_1)$ and $(Y_0,Y_1)$ be arbitrary Banach couples. Let $T$ be a bounded linear operator such that $T:X_0+X_1\to Y_0+Y_1$. Suppose moreover that $T:[X_0,X_1]_{\theta}\to[Y_0,Y_1]_{\theta}$ is compact for some $\theta\in(0,1)$. Then also $T:[X_0,X_1]^{\theta}\to[Y_0,Y_1]_{\theta}$ is compact for the same $\theta\in(0,1)$. 
\end{theorem}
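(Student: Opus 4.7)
The plan is to approximate each element of $[X_0,X_1]^\theta$ by first-order vertical difference quotients that automatically lie in the smaller space $[X_0,X_1]_\theta$, and then leverage the assumed compactness of $T$ on the first Calder\'on method with a Cantor diagonal extraction. The starting observation is the continuous inclusion $[X_0,X_1]_\theta\hookrightarrow[X_0,X_1]^\theta$, which follows by integrating any $F\in\mathcal F(X_0,X_1)$ to a primitive $\widetilde F\in\mathcal G(X_0,X_1)$ with $\widetilde F'(\theta)=F(\theta)$; the content of the theorem is therefore the reverse transfer of compactness.

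For $F\in\mathcal G(X_0,X_1)$ and $h>0$, I would set $G_h(z):=(F(z+ih)-F(z))/(ih)$ on $\bar S$. The Lipschitz hypothesis on the boundary traces of $F$ immediately bounds $G_h(j+i\cdot)$ in $X_j$ by $\|F\|_{\mathcal G}$; moreover $G_h$ is continuous on $\bar S$, holomorphic on $S$, and its $X_0+X_1$-norm grows at most linearly in $|z|$ by the growth condition built into the definition of $\mathcal G$. A Phragm\'en--Lindel\"of argument applied via bounded functionals on $X_0+X_1$ then yields $G_h\in\mathcal F(X_0,X_1)$ with $\|G_h\|_{\mathcal F}\le\|F\|_{\mathcal G}$. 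Consequently $y_h:=G_h(\theta)$ lies in $[X_0,X_1]_\theta$ with norm bounded by $\|F\|_{\mathcal G}$ uniformly in $h$, while $y_h\to F'(\theta)$ in $X_0+X_1$ as $h\to 0$. Given now a bounded sequence $(x_n)\subset[X_0,X_1]^\theta$, I pick representatives $F_n\in\mathcal G$ with $F_n'(\theta)=x_n$ and $\sup_n\|F_n\|_{\mathcal G}\le M$; for each fixed $h$ the sequence $y_{h,n}:=(F_n(\theta+ih)-F_n(\theta))/(ih)$ is bounded in $[X_0,X_1]_\theta$ by $M$, so the compactness of $T\colon[X_0,X_1]_\theta\to[Y_0,Y_1]_\theta$ produces a subsequence along which $(Ty_{h,n})_n$ converges. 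Iterating over a null sequence $h_k\to 0$ and diagonalizing yields a single subsequence $(n_j)$ with $(Ty_{h_k,n_j})_j$ Cauchy in $[Y_0,Y_1]_\theta$ for every $k$.

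The hard part is to upgrade this $k$-by-$k$ Cauchyness to Cauchyness of $(Tx_{n_j})_j$ itself, which reduces to showing $\|T(x_n-y_{h,n})\|_{[Y_0,Y_1]_\theta}\to 0$ as $h\to 0$, uniformly in $n$. The defect $x_n-y_{h,n}$ converges to $0$ only in the sum space $X_0+X_1$, so one has to quantify this in the target interpolation space. My plan is to rewrite the error as a second-order divided difference of $F_n$ and realize it as $H_h^{(n)}(\theta)$ for some $H_h^{(n)}\in\mathcal F(X_0,X_1)$ whose $\mathcal F$-norm vanishes as $h\to 0$ uniformly in $n$; this exploits the continuity of $t\mapsto F_n(j+it)$ on the boundary (beyond mere Lipschitzness) together with Cauchy-type bounds for $F_n''$ on compact subdomains of $S$. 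This quantitative second-order control of the $\mathcal G$-representatives, uniform over the bounded sequence, is the technical crux of the Cwikel--Rochberg argument; once established, it combines with the diagonal extraction of the previous paragraph to force $(Tx_{n_j})_j$ to be Cauchy in $[Y_0,Y_1]_\theta$, completing the proof of compactness of $T\colon[X_0,X_1]^\theta\to[Y_0,Y_1]_\theta$.
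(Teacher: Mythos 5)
Your first two paragraphs are correct and set up the right machinery: the difference quotient $G_h(z)=(F(z+ih)-F(z))/(ih)$ does lie in $\mathcal F(X_0,X_1)$ with $\|G_h\|_{\mathcal F}\le\|F\|_{\mathcal G}$ (the Lipschitz bound on the boundary lines gives the $X_j$ estimates, and Phragm\'en--Lindel\"of applied to scalar functionals gives the $X_0+X_1$ bound on $\bar S$), so $y_h:=G_h(\theta)\in[X_0,X_1]_\theta$ with norm $\le\|F\|_{\mathcal G}$ and $y_h\to F'(\theta)$ in $X_0+X_1$. The third paragraph, however, contains a genuine gap. You want $\|T(x_n-y_{h,n})\|_{[Y_0,Y_1]_\theta}\to0$ uniformly in $n$ and propose to realize $x_n-y_{h,n}$ as $H^{(n)}_h(\theta)$ with $\|H^{(n)}_h\|_{\mathcal F}\to0$. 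But the $\mathcal F$-norm of any such $H^{(n)}_h$ is controlled by the $X_j$-norms of its boundary traces, and the only regularity you have there is Lipschitz; second-order divided differences of a merely Lipschitz function are $O(1)$, not $o(1)$, so there is no mechanism to make the $\mathcal F$-norm small. Interior Cauchy estimates for $F_n''$ control nothing on the boundary lines where the $\mathcal F$-norm is computed. Worse, the quantity $\|T(x_n-y_{h,n})\|_{[Y_0,Y_1]_\theta}$ is not even \emph{a priori} finite: $Tx_n$ is only known to lie in $Y_0+Y_1$, since boundedness of $T$ from $[X_0,X_1]^\theta$ to $[Y_0,Y_1]_\theta$ is precisely part of what must be proved.

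The repair is much softer and makes both the uniform estimate and the Cantor diagonalization unnecessary. Fix a single $x$ in the unit ball of $[X_0,X_1]^\theta$ and $F\in\mathcal G$ with $F'(\theta)=x$, $\|F\|_{\mathcal G}\le 2$. You have produced $y_h$ in the ball of radius $2$ of $[X_0,X_1]_\theta$ with $y_h\to x$ in $X_0+X_1$. Compactness of $T:[X_0,X_1]_\theta\to[Y_0,Y_1]_\theta$ gives a sequence $h_k\to0$ with $Ty_{h_k}\to z$ in $[Y_0,Y_1]_\theta$. On the other hand $Ty_{h_k}\to Tx$ in $Y_0+Y_1$ since $T$ is bounded on the sum; combined with the continuous embedding $[Y_0,Y_1]_\theta\hookrightarrow Y_0+Y_1$ this forces $z=Tx$. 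Hence $Tx\in\overline{T\big(2B_{[X_0,X_1]_\theta}\big)}$, the closure taken in $[Y_0,Y_1]_\theta$, which is a compact set by hypothesis. Since $x$ was arbitrary in the unit ball, $T\big(B_{[X_0,X_1]^\theta}\big)$ is contained in a compact subset of $[Y_0,Y_1]_\theta$, which yields both boundedness and compactness of $T:[X_0,X_1]^\theta\to[Y_0,Y_1]_\theta$ at once. The point you are missing is that compactness lets you pass from $X_0+X_1$-convergence of the inputs to $[Y_0,Y_1]_\theta$-convergence of the outputs along a subsequence, with the limit pinned down by consistency in the sum space; no quantitative rate, let alone a uniform one, is needed.
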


We consider a measurable function $f$, weights $v,v_0,v_1$ and numbers $p_0,p,\\p_1$, $R>0$ fixed. The following objects depend on these quantities, but we do not indicate this in the notation. We define
\begin{equation*}
\begin{split}
 E_{R,0}:&=\bigg\{x\in\R^d:|f(x)|^{p_0-p}\frac{v_0(x)}{v(x)}\ge R\bigg\},  \\  
 E_{R,1}:&=\bigg\{x\in\R^d:|f(x)|^{p_1-p}\frac{v_1(x)}{v(x)}\ge R\bigg\},  \\ 
 E_R:&=E_{R,0}\cup E_{R,1}.
\end{split}
\end{equation*}
Define
\begin{equation*}
  f_R:=f(1-\chi_{E_R})
\end{equation*}
for $f\in\mathcal{L}^{q,\lambda}(v)$ and consider the following condition:
\begin{equation}\label{main cond.}
f=\lim_{R\to\infty}f_R\;\;\text{in}\;\;\mathcal{L}^{q,\lambda}(v).
\end{equation}

We will use Theorems \ref{thm:CwKa} and \ref{thm:CR} in the following special setting: 

\begin{proposition}\label{prop:main}
Let $0<\lambda<d$, let $1\leq p_{-}<p_{+}\leq\infty$ and  $q_1\in[p_{-},p_{+}]$, $q\in(p_{-},p_{+})$
\begin{equation*}
v\in A_{q/p_{-}}(\R^d)\cap RH_{(p_{+}/q)'}(\R^d),\qquad v_1\in A_{q_1/p_{-}}(\R^d)\cap RH_{(p_{+}/q_1)'}(\R^d). 
\end{equation*}
Then
\begin{equation*}
\begin{split}
  [\mathcal L^{q_0,\lambda}(v_0),\mathcal L^{q_1,\lambda}(v_1)]^{\gamma}&=\mathcal L^{q,\lambda}(v)  \\
  [\mathcal L^{q_0,\lambda}(v_0),\mathcal L^{q_1,\lambda}(v_1)]_{\gamma}&=\{f\in \mathcal L^{q,\lambda}(v):
  \eqref{main cond.}\quad\text{holds}\}
\end{split}
\end{equation*}
for some $q_0\in(p_{-},p_{+})$,  $v_0\in A_{q_0/p_{-}}(\R^d)\cap RH_{(p_{+}/q_0)'}(\R^d)$, and $\gamma\in(0,1)$.
\end{proposition}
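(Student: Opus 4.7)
The plan is to reduce the proposition to a weighted analogue of the Hakim--Nakai--Sawano-type complex interpolation identity for Morrey spaces (cf.\ Theorem~\ref{thm:HNS} alluded to in the remark after Theorem~\ref{thm:main}). The parameters $q_0$, $v_0$, and $\gamma\in(0,1)$ are free in the statement, and I would pick them so that $(q,v)$ lies on the standard Stein--Weiss interpolation segment between $(q_0,v_0)$ and $(q_1,v_1)$. Concretely, fix $\gamma\in(0,1)$ small (to be shrunk later if needed) and define $q_0$ by
\begin{equation*}
  \frac{1}{q}=\frac{1-\gamma}{q_0}+\frac{\gamma}{q_1},
\end{equation*}
so that $q_0\to q$ as $\gamma\to 0$; since $q\in(p_-,p_+)$ this forces $q_0\in(p_-,p_+)$ for $\gamma$ small. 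Then set
\begin{equation*}
  v_0:=\Big(v^{1/q}\cdot v_1^{-\gamma/q_1}\Big)^{q_0/(1-\gamma)},
\end{equation*}
which gives the identity $v^{1/q}=v_0^{(1-\gamma)/q_0}\,v_1^{\gamma/q_1}$ that makes the weighted $L^p$-interpolation scale match up.

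The next step is to check that $v_0\in A_{q_0/p_-}(\R^d)\cap RH_{(p_+/q_0)'}(\R^d)$. The hypothesis supplies $v\in A_{q/p_-}\cap RH_{(p_+/q)'}$ and $v_1\in A_{q_1/p_-}\cap RH_{(p_+/q_1)'}$, and the classes $A_s\cap RH_t$ are both open in their defining exponents and stable under taking products of (small) fractional powers; this can be made quantitative via the classical equivalence $w\in A_p\cap RH_\sigma\iff w^\sigma\in A_{\sigma(p-1)+1}$ together with an elementary exponent computation. So, after possibly shrinking $\gamma$, the weight $v_0$ lies in the desired Muckenhoupt/reverse H\"older class, and all three data points $(q_j,v_j)$ and $(q,v)$ satisfy the hypotheses of the weighted Morrey theory.

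With the parameters in hand, the two interpolation identities follow by invoking the weighted Hakim--Nakai--Sawano interpolation theorem for the couple $(\mathcal{L}^{q_0,\lambda}(v_0),\mathcal{L}^{q_1,\lambda}(v_1))$: Calder\'on's second complex functor $[\,\cdot\,,\,\cdot\,]^\gamma$ recovers all of $\mathcal{L}^{q,\lambda}(v)$, whereas the first functor $[\,\cdot\,,\,\cdot\,]_\gamma$ selects precisely the closed subspace characterized by \eqref{main cond.}. The sets $E_{R,0}$, $E_{R,1}$ and their union $E_R$ are tailored so that on $\R^d\setminus E_R$ the function $f$ is simultaneously controlled in both endpoint norms, and $f_R$ becomes the natural truncation giving the correct density property. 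The main obstacle is the transfer of the HNS argument from the unweighted to the weighted Samko-type Morrey setting: in the unweighted case the cut-off is performed purely in modulus on a Morrey-style level set, and the weighted version requires reintroducing $v_0, v_1$ carefully into the truncation so that $f_R\to f$ in both $\mathcal{L}^{q_0,\lambda}(v_0)$ and $\mathcal{L}^{q_1,\lambda}(v_1)$. Once this HNS-type statement is available (which is essentially the content of Theorem~\ref{thm:HNS} in the paper), the two identities in the proposition read off directly.
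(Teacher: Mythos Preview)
Your approach is correct and essentially identical to the paper's: choose $(q_0,v_0,\gamma)$ satisfying the convexity relations \eqref{eq:convexity} with $v_0$ in the right weight class, then read off both interpolation identities from Theorem~\ref{thm:HNS}. The paper simply black-boxes your second paragraph as a citation to Lemma~\ref{lem:main} (which is \cite[Lemma~4.9]{HL}, proved via exactly the Johnson--Neugebauer equivalence $w\in A_p\cap RH_\sigma\Leftrightarrow w^\sigma\in A_{\sigma(p-1)+1}$ that you invoke); also, your worry about ``transferring the HNS argument to the weighted setting'' is unfounded, since Theorem~\ref{thm:HNS} is already stated for arbitrary weights $w_0,w_1$, so once $(q_0,v_0,\gamma)$ are in hand the two identities are immediate.
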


We postpone the proof of Proposition \ref{prop:main} to the following section.

\begin{lemma}\label{lem:lemOk}
If $0<\lambda<d$, $1\leq p_j<\infty$ and $w_j$ are weights, then the spaces $X_j=\mathcal L^{p_j,\lambda}(w_j)$ satisfy the condition \eqref{it:lattice} of Theorem \ref{thm:CwKa}.
\end{lemma}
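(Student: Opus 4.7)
My plan is to verify directly that the weighted Morrey space $\mathcal L^{p,\lambda}(w)$ is a complexified Banach lattice of measurable functions on the measure space $(\R^d,dx)$, with $dx$ Lebesgue measure. The starting observation is that both $\mathcal L^{p_0,\lambda}(w_0)$ and $\mathcal L^{p_1,\lambda}(w_1)$ embed into $L^0(\R^d,dx)$, the space of Lebesgue-a.e.\ equivalence classes of measurable functions: since each $w_j$ is strictly positive almost everywhere, its null sets coincide with the Lebesgue null sets, so both spaces use the same notion of a.e.\ equality and inherit the same pointwise-a.e.\ partial order $f\le g$. This supplies the ``common measure space'' clause of condition \eqref{it:lattice}.

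With this ambient setup fixed, the verification splits into three essentially routine steps. First, the lattice property: the norm
\begin{equation*}
  \Norm{f}{\mathcal L^{p,\lambda}(w)} = \sup_Q \abs{Q}^{-\lambda/(dp)}\Big(\int_Q \abs{f}^p w\Big)^{1/p}
\end{equation*}
manifestly depends on $f$ only through $\abs{f}$ and is monotone, because $\abs{f}\le\abs{g}$ a.e.\ forces $\int_Q \abs{f}^p w \le \int_Q \abs{g}^p w$ cube by cube. Second, completeness: if $(f_n)$ is Cauchy in $\mathcal L^{p,\lambda}(w)$, then its restriction to any fixed cube $Q_0$ is Cauchy in $L^p(Q_0,w)$, and a standard diagonal argument over an exhaustion of $\R^d$ by cubes produces a subsequence $(f_{n_k})$ converging a.e.\ on $\R^d$ to some measurable $f$; Fatou's lemma applied inside each cube $Q$ then yields $\Norm{f-f_n}{\mathcal L^{p,\lambda}(w)}\to 0$, and in particular $f\in\mathcal L^{p,\lambda}(w)$. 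Third, the complexification: the real-valued members form a real Banach lattice under pointwise order with $f\vee g$, $f\wedge g$ taken pointwise, and $\mathcal L^{p,\lambda}(w)$ is simply its complexification, with modulus $\abs{f}(x)=(\Re f(x)^2+\Im f(x)^2)^{1/2}$, so the hypothesis of \eqref{it:lattice} holds for both $X_0$ and $X_1$.

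The main obstacle here is not substantive but presentational: all the ingredients are standard, and no special property of Muckenhoupt weights or of the Morrey scaling exponent $\lambda$ intervenes beyond the a priori finiteness of the norm on individual elements. The only step requiring a short argument rather than a direct unwinding of definitions is the Fatou step inside the completeness proof; everything else is immediate from the formula for $\Norm{\cdot}{\mathcal L^{p,\lambda}(w)}$.
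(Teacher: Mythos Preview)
Your proposal is correct and follows the same approach as the paper: a direct verification that $\mathcal L^{p_j,\lambda}(w_j)$ are complexified Banach lattices of measurable functions on the common measure space $\R^d$. The paper's proof simply asserts that this is easy to verify (with a reference to \cite{ST}), whereas you have spelled out the monotonicity, completeness via Fatou, and complexification steps explicitly.
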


\begin{proof}
It is easy to verify that $X_j=\mathcal L^{p_j,\lambda}(w_j)$ are complexified Banach lattices of measurable functions on the common measure space $\R^d$ (see also \cite{ST}).
\end{proof}

\begin{remark}
We observe that Morrey spaces don't satisfy any of the conditions \eqref{it:UMD}, \eqref{it:Xinterm} and \eqref{it:Yinterm} of Theorem \ref{thm:CwKa}. For more details, see \cite[Theorem 4.3.3]{HNVW1} and \cite{ST}.
\end{remark}

We quote the following results from which the proof of Proposition \ref{prop:main} will follow:

\begin{theorem}[\cite{HNS}, Theorem 2.3]\label{thm:HNS}
If $0<\lambda<d$, $q_0,q_1\in[1,\infty)$ and $w_0,w_1$ are two weights, then for all $\theta\in(0,1)$ we have
\begin{equation*}
\begin{split}
  [\mathcal L^{q_0,\lambda}(w_0),\mathcal L^{q_1,\lambda}(w_1)]^{\theta}&=\mathcal L^{q,\lambda}(w)  \\
  [\mathcal L^{q_0,\lambda}(w_0),\mathcal L^{q_1,\lambda}(w_1)]_{\theta}&=\{f\in \mathcal L^{q,\lambda}(w):
  \eqref{main cond.}\quad\text{holds}\},
\end{split}
\end{equation*}
where
\begin{equation}\label{eq:convexity}
  \frac{1}{q}:=\frac{1-\theta}{q_0}+\frac{\theta}{q_1},\qquad
  w^{\frac{1}{q}}:=w_0^{\frac{1-\theta}{q_0}}w_1^{\frac{\theta}{q_1}}.
\end{equation}
\end{theorem}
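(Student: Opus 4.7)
The plan is to establish both identities by constructing analytic functions on the strip $S$ adapted to the weighted Morrey lattice structure, with the weight hypothesis \eqref{eq:convexity} entering only through a pointwise convexity identity. Write $X_j:=\mathcal L^{q_j,\lambda}(w_j)$ and $1/q(z):=(1-z)/q_0+z/q_1$, so that $q(\theta)=q$.

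\emph{The identity $[X_0,X_1]^\theta=\mathcal L^{q,\lambda}(w)$.} For the inclusion $\supseteq$, given a non-zero $f\in\mathcal L^{q,\lambda}(w)$ I would set
\begin{equation*}
  G(z,x):=\abs{f(x)}^{q/q(z)}\sign(f(x))\Big(\frac{w(x)}{w_0(x)}\Big)^{(1-z)/q_0}\Big(\frac{w(x)}{w_1(x)}\Big)^{z/q_1},
\end{equation*}
and $F(z):=\int_0^z G(\zeta)\ud\zeta$. The convexity relation \eqref{eq:convexity} forces $G(\theta,x)=f(x)$, while along $\Re z=0$ and $\Re z=1$ one obtains the pointwise identities $\abs{G(it,x)}^{q_0}w_0(x)=\abs{f(x)}^q w(x)$ and $\abs{G(1+it,x)}^{q_1}w_1(x)=\abs{f(x)}^q w(x)$, which after a cube-by-cube computation and taking the Morrey supremum yield $\Norm{G(it)}{X_0}^{q_0}=\Norm{G(1+it)}{X_1}^{q_1}=\Norm{f}{\mathcal L^{q,\lambda}(w)}^q$. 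This Lipschitz control on the boundary places $F\in\mathcal G(X_0,X_1)$ with $F'(\theta)=G(\theta)=f$. The reverse inclusion is a Hadamard three-lines argument applied to $z\mapsto\int F(z)(x)h(x)\ud x$ with $h$ in a norming subset of a predual of $\mathcal L^{q,\lambda}(w)$; differentiating the three-lines bound at $z=\theta$ yields $\Norm{F'(\theta)}{\mathcal L^{q,\lambda}(w)}\le\Norm{F}{\mathcal G(X_0,X_1)}$.

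\emph{The identity $[X_0,X_1]_\theta=\{f\in\mathcal L^{q,\lambda}(w):\eqref{main cond.}\ \text{holds}\}$.} The inclusion $\subseteq$ combines a three-lines bound on $F(\theta)$ with the observation that the uniform (not merely Lipschitz) boundedness of $F$ on $\Re z\in\{0,1\}$ forces the super-level-set mass of $F(\theta)$ on $E_R$ to vanish as $R\to\infty$, which is precisely \eqref{main cond.}. Conversely, given $f$ satisfying \eqref{main cond.} I would apply the $G$-construction above to each truncation $f_R$: on $E_R^c$ the ratios $\abs{f}^{q_j-q}w_j/w$ are bounded by $R$, so the associated $G_R$ is \emph{uniformly} bounded on $\Re z\in\{0,1\}$ rather than merely Lipschitz, placing $G_R\in\mathcal F(X_0,X_1)$ with $G_R(\theta)=f_R$. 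The Morrey convergence $f_R\to f$ then upgrades to Cauchyness of $\{G_R\}$ in $\mathcal F(X_0,X_1)$, producing the required $F$ with $F(\theta)=f$.

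\emph{Main obstacle.} The heart of the argument is the first identity, where the strictly subspace character of $[X_0,X_1]_\theta$ inside $\mathcal L^{q,\lambda}(w)$ reflects the non-separability of Morrey spaces and the failure of smooth truncations to be dense in Morrey norm; condition \eqref{main cond.} is exactly the density selector that arises from analytic extensions. Pinning this down rigorously requires converting the pointwise parametrization $\abs{f}^{q/q(z)}$ into cube-averaged control uniformly in $Q\subset\R^d$, with no dominated-convergence crutch from separability. A subsidiary obstacle is the three-lines lemma itself: since a conventional $L^{q'}(w^{1-q'})$-duality is unavailable for Morrey spaces, one must identify the Morrey predual with a block-type space and verify that the classical $L^1$--$L^\infty$ three-lines inequality descends through pairings with block atoms uniformly in the parameters.
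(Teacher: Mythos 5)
The paper does not prove Theorem~\ref{thm:HNS}: it is imported verbatim from Hakim--Nakamura--Sawano \cite{HNS}, and the present paper only cites it (together with the Fatou-property reference \cite{ST}). So there is no in-paper proof to compare against; what follows is an assessment of your sketch on its own merits and against what the \cite{HNS} argument is known to look like.

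Your explicit construction of $G(z,x)$ is the correct Stein--Weiss/Calder\'on interpolation function, and the pointwise boundary identities $\abs{G(j+it,x)}^{q_j}w_j(x)=\abs{f(x)}^q w(x)$ together with $G(\theta,\cdot)=f$ (forced by \eqref{eq:convexity}) do check out, so the inclusion $\mathcal L^{q,\lambda}(w)\subseteq[X_0,X_1]^{\theta}$ is on solid ground modulo routine verifications of strong holomorphy and the growth bound. However, the rest is a sketch with genuine gaps. First, the reverse inclusion $[X_0,X_1]^{\theta}\subseteq\mathcal L^{q,\lambda}(w)$ is the crux and you defer it to a three-lines argument through a block-space predual; this is substantially harder than you indicate because Calder\'on's upper method evaluates $F'(\theta)$, not $F(\theta)$, so the classical three-lines bound is not directly applicable and one needs a derivative (Schwarz--Pick type) estimate together with a norming-set identification for the non-reflexive Morrey predual. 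The cleaner route, and the one \cite{HNS} follows, is to invoke the Calder\'on--Lozanovskii product theory for couples of Banach lattices with the Fatou property: one proves $X_0^{1-\theta}X_1^{\theta}=\mathcal L^{q,\lambda}(w)$ by a direct computation and then uses the general identifications $[X_0,X_1]^{\theta}=X_0^{1-\theta}X_1^{\theta}$ and $[X_0,X_1]_{\theta}=\overline{X_0\cap X_1}$ in that product space; your argument re-derives fragments of this machinery by hand but does not close the loop. Second, the Cauchyness of $\{G_R\}$ in $\mathcal F(X_0,X_1)$ does not follow from $f_R\to f$ in Morrey norm, because $f\mapsto G$ is nonlinear (through $\abs{f}^{q/q(z)}\sign f$), so $G_R-G_{R'}$ is not the interpolation function of $f_R-f_{R'}$; one must instead build the interpolation function of the difference $f_R-f_{R'}$ directly and observe that its boundary $X_j$-norms are $\Norm{f_R-f_{R'}}{\mathcal L^{q,\lambda}(w)}^{q/q_j}$. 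Third, the inclusion $[X_0,X_1]_{\theta}\subseteq\{f:\eqref{main cond.}\text{ holds}\}$ is only asserted (``uniform boundedness forces vanishing mass on $E_R$''); this is precisely the delicate half of the \cite{HNS} theorem and requires either the closure characterization above or a concrete approximation argument showing that every $F\in\mathcal F(X_0,X_1)$ produces $F(\theta)$ whose tails on $E_R$ vanish, neither of which your sketch supplies.
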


\begin{remark}
The authors of \cite{HNS} proved Theorem \ref{thm:HNS} in the framework of generalized weighted Morrey spaces. As explained in \cite[Example 1.2]{HNS}, if one takes $\varphi(x,r)=|B(x,r)|^{\frac{1}{q}-\frac{\lambda}{dq}}$and $v=1$ in \cite[Definition 1.1]{HNS}, where  $B=B(x,r)$ is a ball with center $x$ and radius $r$ and $v$ is a weight, then the Samko type weighted Morrey space is an example of the generalized weighted Morrey space.
\end{remark}

In order to connect Theorem \ref{thm:HNS} with the $A_{q/p_{-}}(\R^d)\cap RH_{(p_{+}/q)'}(\R^d)$ weights, we need:

\begin{lemma}[\cite{HL}, Lemma 4.9]\label{lem:main}
Let $1\leq p_{-}<p_{+}\leq\infty$, $q_1\in[p_{-},p_{+}]$, $q\in(p_{-},p_{+})$, and
\begin{equation*}
  w_1\in A_{q_1/p_{-}}(\R^d)\cap RH_{(p_{+}/q_1)'}(\R^d),\qquad w\in A_{q/p_{-}}(\R^d)\cap
  RH_{(p_{+}/q)'}(\R^d).
\end{equation*}
Then there exist $q_0\in(p_{-},p_{+})$, $w_0\in A_{q_0/p_{-}}(\R^d)\cap RH_{(p_{+}/q_0)'}(\R^d)$, and $\theta\in(0,1)$ such that (\ref{eq:convexity}) holds.
\end{lemma}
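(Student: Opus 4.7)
The plan is to treat $\theta\in(0,1)$ as a free parameter, solve the two identities in \eqref{eq:convexity} directly for $q_0$ and $w_0$, and then show that for $\theta$ sufficiently close to $0$ the resulting pair satisfies all the required membership constraints.

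From the exponent identity I solve
\[
 q_0 = q_0(\theta) = \frac{q\,q_1(1-\theta)}{q_1 - \theta q},
\]
which is continuous in $\theta$ with $q_0(0) = q \in (p_-,p_+)$; hence $q_0(\theta) \in (p_-,p_+)$ for $\theta$ small enough. Substituting back into the weight identity gives the closed-form expression
\[
w_0 = w^{a(\theta)}\,w_1^{b(\theta)}, \qquad a(\theta) := \frac{q_0(\theta)}{q(1-\theta)}, \qquad b(\theta) := -\frac{\theta\,q_0(\theta)}{q_1(1-\theta)},
\]
with $a(0)=1$ and $b(0)=0$. Both $a$ and $b$ are continuous in $\theta$, so as $\theta\to 0^+$ the weight $w_0(\theta)$ degenerates (in the sense of its weight-class data) to $w$.

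The nontrivial step is the membership $w_0 \in A_{q_0/p_-}(\R^d)\cap RH_{(p_+/q_0)'}(\R^d)$ for $\theta$ small. The clean device here is the standard single-exponent reformulation
\[
u \in A_{p/p_-}(\R^d)\cap RH_{(p_+/p)'}(\R^d) \iff u^{(p_+/p)'} \in A_{r(p)}(\R^d), \qquad r(p) := (p_+/p)'\bigl(\tfrac{p}{p_-} - 1\bigr)+1,
\]
which collapses the joint condition into a single Muckenhoupt condition. With this reformulation the task reduces to verifying
\[
w^{a(\theta)(p_+/q_0)'}\,w_1^{b(\theta)(p_+/q_0)'} \in A_{r(q_0(\theta))}(\R^d).
\]
At $\theta=0$ this is exactly the hypothesis $w^{(p_+/q)'} \in A_{r(q)}(\R^d)$. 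For small $\theta>0$, the exponents on $w$ and $w_1$ and the target index $r(q_0(\theta))$ are all small perturbations of their $\theta=0$ values, and membership is preserved by combining the Gehring-type self-improvement of the $A_{r(q)}$ class (openness both in the exponent $r$ and in the power of the weight) with the analogous membership of $w_1^{(p_+/q_1)'}$ inherited from the hypothesis on $w_1$.

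The main obstacle is precisely this simultaneous motion in $\theta$: the weight $w_0$, the power $(p_+/q_0)'$, \emph{and} the target index $r(q_0(\theta))$ all shift with $\theta$, so a one-sided openness argument applied to $A_s$ or $RH_t$ separately does not suffice. The single-exponent reformulation above is what turns the two-parameter perturbation into a one-parameter openness statement for the scale of $A_r(\R^d)$ classes, and that is where the heart of the argument lies.
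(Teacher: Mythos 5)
The setup and the algebraic reduction are right: solving the first identity in \eqref{eq:convexity} gives $q_0(\theta)=\frac{q q_1(1-\theta)}{q_1-\theta q}$ with $q_0(0)=q$, and the second then forces $w_0=w^{a(\theta)}w_1^{b(\theta)}$ with $a(0)=1$, $b(0)=0$, so $q_0(\theta)\in(p_-,p_+)$ for $\theta$ small is automatic. Invoking the Johnson--Neugebauer/Auscher--Martell change of scale, namely $u\in A_{p/p_-}\cap RH_{(p_+/p)'}\Leftrightarrow u^{(p_+/p)'}\in A_{r(p)}$ with $r(p)=(p_+/p)'(p/p_- -1)+1$, is also the right move and is the same device used in \cite{HL}.

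The gap is in the step you yourself label as ``the heart of the argument.'' After the change of scale you do \emph{not} obtain a one-parameter openness statement for a single $A_r$ class: you must show that $U^{\alpha(\theta)}V^{\beta(\theta)}\in A_{r(q_0(\theta))}$, where $U=w^{(p_+/q)'}\in A_{r(q)}$, $V=w_1^{(p_+/q_1)'}\in A_{r(q_1)}$, $\alpha(\theta)\to 1$, $\beta(\theta)\to 0^-$, and $r(q_0(\theta))\to r(q)$. Two genuinely different weights are still present, so this is a two-weight perturbation statement, not an openness statement for a single weight in a single scale. Gehring self-improvement and $A_p$ openness alone do not give you that $U^{\alpha}V^{\beta}$ lands in any $A_s$ class: there is no log-convexity of $A_s$ in the exponent of an arbitrary second weight $V$. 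What is needed, and what you never supply, is a quantitative argument of the following type: apply H\"older inside the $A_{r(q_0)}$ averages to split $\ave{U^{\alpha}V^{\beta}}_Q$ and $\ave{U^{-\alpha/(r(q_0)-1)}V^{-\beta/(r(q_0)-1)}}_Q$ into $U$-factors and $V$-factors; control the $U$-factors with reverse H\"older self-improvement of $U$ (which absorbs $\alpha$ close to $1$ and $r(q_0)$ close to $r(q)$); control the $V$-factors with the $A_\infty$/reverse-Jensen estimates for $V$, which requires $|\beta|$ small relative to the H\"older exponents and relative to the integrability index $1/(r(q_1)-1)$ of $V^{-1}$; then collect the factors. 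This is precisely the nontrivial content of \cite[Lemma~4.9]{HL} (building on its Lemma~4.4 for the pure $A_p$ case), and without it the present proposal only proves the elementary exponent identity, not the weight-class membership of $w_0$.

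A secondary imprecision: since $\beta(\theta)<0$, you are introducing a \emph{negative} power of $V$. That it is locally integrable at all (hence that $w_0$ is a weight) already uses $V\in A_{r(q_1)}$ with $r(q_1)>1$ (or the $A_1$/ $RH_\infty$ endpoint variants); that it does so with quantitative control on cubes is part of the missing estimate, and deserves to be stated rather than left implicit.
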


\begin{remark}
Lemma \ref{lem:main} remains true in the case $p_{+}=\infty$, provided that $q_1<\infty$. In this case the reverse H\"older conditions on $w,w_0,w_1$ are vacuous and the proof is given in \cite[Lemma 4.4]{HL}.
\end{remark}

\section{The Proof of the key Proposition \ref{prop:main} and Theorem \ref{thm:main}}\label{sec:3}

To complete the proof of Theorem \ref{thm:main} it remains to verify Proposition \ref{prop:main}: 

\begin{proof}[Proof of Proposition \ref{prop:main}]
We prove the proposition in the case $p_{+}<\infty$. The case $p_{+}=\infty$ is proved in a similar way. With some $0<\lambda<d$, we are given $1\leq p_{-}<p_{+}<\infty$, $q_1\in[p_{-},p_{+}]$, $q\in(p_{-},p_{+})$ and weights $v\in A_{q/p_{-}}(\R^d)\cap RH_{(p_{+}/q)'}(\R^d)$, $v_1\in A_{q_1/p_{-}}(\R^d)\cap RH_{(p_{+}/q_1)'}(\R^d)$. By Lemma \ref{lem:main}, there is some  $q_0\in(p_{-},p_{+})$, a weight $v_0\in A_{q_0/p_{-}}(\R^d)\cap RH_{(p_{+}/q_0)'}(\R^d)$, and $\theta\in(0,1)$ such that
\begin{equation*}
  \frac{1}{q}=\frac{1-\theta}{q_0}+\frac{\theta}{q_1},
  \qquad
  v^{\frac{1}{q}}=v_0^{\frac{1-\theta}{q_0}}v_1^{\frac{\theta}{q_1}}.
\end{equation*}
By Theorem \ref{thm:HNS}, we then have \begin{equation*}
\begin{split}
  [\mathcal L^{q_0,\lambda}(v_0),\mathcal L^{q_1,\lambda}(v_1)]^{\theta}&=\mathcal L^{q,\lambda}(v)  \\
  [\mathcal L^{q_0,\lambda}(v_0),\mathcal L^{q_1,\lambda}(v_1)]_{\theta}&=\{f\in \mathcal L^{q,\lambda}(v):
  \eqref{main cond.}\quad\text{holds}\},
\end{split}
\end{equation*}
as we claimed.
\end{proof}

We can now give the proof of our main result:

\begin{proof}[Proof of Theorem \ref{thm:main}]
Let $p_{+}<\infty$ and recall that the assumptions of Theorem \ref{thm:main} are in force. The case $p_{+}=\infty$ is proved in a similar way. In particular, $T$ is a bounded linear operator on $\mathcal L^{p,\lambda}(w)$ for all $p\in(p_{-},p_{+})$ and all $w\in A_{p/p_{-}}(\R^d)\cap RH_{(p_{+}/p)'}(\R^d)$. In addition, it is assumed that $T$ is a compact operator on $\mathcal L^{p_1,\lambda}(w_1)$ for some $p_1\in[p_{-},p_{+}]$ and some $w_1\in A_{p_1/p_{-}}(\R^d)\cap RH_{(p_{+}/p_1)'}(\R^d)$. We need to prove that $T$ is actually compact on $\mathcal L^{p,\lambda}(w)$ for all $p\in(p_{-},p_{+})$ and all $w\in A_{p/p_{-}}(\R^d)\cap RH_{(p_{+}/p)'}(\R^d)$. Now, fix some $p\in(p_{-},p_{+})$ and $w\in A_{p/p_{-}}(\R^d)\cap RH_{(p_{+}/p)'}(\R^d)$. By Proposition \ref{prop:main}, we have
\begin{equation*}
\begin{split}
  [\mathcal L^{p_0,\lambda}(w_0),\mathcal L^{p_1,\lambda}(w_1)]^{\theta}&=\mathcal L^{p,\lambda}(w)  \\
  [\mathcal L^{p_0,\lambda}(w_0),\mathcal L^{p_1,\lambda}(w_1)]_{\theta}&=\{f\in \mathcal L^{p,\lambda}(w):
  \eqref{main cond.}\quad\text{holds}\}
\end{split}
\end{equation*}
for some $p_0\in(p_{-},p_{+})$, some $w_0\in A_{p_0/p_{-}}(\R^d)\cap RH_{(p_{+}/p_0)'}(\R^d)$ and some $\theta\in(0,1)$. Writing $X_j=Y_j=\mathcal L^{p_j,\lambda}(w_j)$, we know that $T:X_0+X_1\to Y_0+Y_1$, that $T:X_0\to Y_0$ is bounded (since $T$ is bounded on all $\mathcal L^{q,\lambda}(w)$ with $q\in(p_{-},p_{+})$ and $w\in A_{q/p_{-}}(\R^d)\cap RH_{(p_{+}/q)'}(\R^d)$), and that $T:X_1\to Y_1$ is compact (since this was assumed). By Lemma \ref{lem:lemOk}, the last condition \eqref{it:lattice} of Theorem \ref{thm:CwKa} is also satisfied by these spaces $X_j=\mathcal L^{p_j,\lambda}(w_j)$. By Theorem \ref{thm:CwKa}, it follows that $T$ is also compact on $[X_0,X_1]_\theta=[Y_0,Y_1]_\theta=\{f\in \mathcal L^{p,\lambda}(w):\eqref{main cond.}\quad\text{holds}\}$. Hence, by Theorem \ref{thm:CR}, we conclude that $T$ is also compact from  $[X_0,X_1]^\theta=\mathcal L^{p,\lambda}(w)$ to $[Y_0,Y_1]_\theta=\{f\in \mathcal L^{p,\lambda}(w):\eqref{main cond.}\quad\text{holds}\}$. In particular, this implies that $T$ is also compact on $\mathcal L^{p,\lambda}(w)$.
\end{proof}

\section{Commutators with functions of bounded mean oscillation}\label{sec:4}

We indicate several applications of Theorem \ref{thm:main} which deal with commutators of the form
\begin{equation*}
  [b,T]:f\mapsto bT(f)-T(bf),
\end{equation*}
where the pointwise multiplier $b$ belongs to the space
\begin{equation*}
  \BMO(\R^d):=\Big\{f:\R^d\to\C\ \Big|\ \Norm{f}{\BMO}:=\sup_Q\ave{\abs{f-\ave{f}_Q}}_Q<\infty\Big\}
\end{equation*}
 of functions of bounded mean oscillation, or its subspace
 \begin{equation*}
  \CMO(\R^d):=\overline{C_c^\infty(\R^d)}^{\BMO(\R^d)},
\end{equation*}
where the closure is in the $\BMO$ norm. We will need the following results of Duoandikoetxea--Rosenthal \cite{DR} (see also \cite{DR2018, DR2020}) on the extrapolation of boundedness on the corresponding weighted Morrey spaces from the assumption of weighted estimates on $L^p(w)$ spaces:

\begin{theorem}[\cite{DR}, Theorem 1.1]\label{thm:extr. bdd. 1}
Let $1\leq\kappa\leq p_1<\infty$ and $T$ be a defined and bounded operator on $L^{p_1}(w)$ for all $w\in A_{p_1/\kappa}(\R^d)$. Then $T$ is bounded on $\mathcal L^{p,\lambda}(w)$ for all $p\in(\kappa,\infty)$ (and also $p=\kappa$ if $p_1=\kappa$), all $0<\lambda<d-\frac{d}{\sigma}$ and all $w\in A_{p/\kappa}(\R^d)\cap RH_{\sigma}(\R^d)$.
\end{theorem}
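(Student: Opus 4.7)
The strategy is to combine classical Rubio de Francia extrapolation with a cube-by-cube reduction of the Morrey norm to a family of weighted Lebesgue norms.

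First, I would apply Rubio de Francia's classical extrapolation theorem in the weighted $L^p$ setting: the hypothesis that $T$ is bounded on $L^{p_1}(w)$ for every $w\in A_{p_1/\kappa}(\R^d)$ already implies that $T$ is bounded on $L^q(u)$ for every $q\in(\kappa,\infty)$ and every $u\in A_{q/\kappa}(\R^d)$, with operator norm controlled by an increasing function of $[u]_{A_{q/\kappa}}$. In the endpoint case $p_1=\kappa$, the hypothesis directly yields the $L^\kappa(u)$ bound for every $u\in A_1$. This reduces the problem to a transfer from weighted Lebesgue to weighted Morrey bounds, with the Muckenhoupt/reverse-H\"older structure $A_{p/\kappa}\cap RH_\sigma$ encoding the compatibility.

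Fix $p\in(\kappa,\infty)$, $0<\lambda<d-d/\sigma$, $w\in A_{p/\kappa}\cap RH_\sigma$, and $f\in\mathcal{L}^{p,\lambda}(w)$. The task is to estimate $|Q|^{-\lambda/(dp)}\|Tf\|_{L^p(Q,w)}$ uniformly in cubes $Q$. The central device is to produce, for each such $Q$, an auxiliary weight $\omega_Q$ on $\R^d$ satisfying three properties: (i) $\omega_Q\geq w$ on $Q$, so that $\|Tf\|_{L^p(Q,w)}\leq\|Tf\|_{L^p(\omega_Q)}$; (ii) $\omega_Q\in A_{p/\kappa}(\R^d)$ with Muckenhoupt characteristic bounded by a constant depending only on $[w]_{A_{p/\kappa}}$ and $[w]_{RH_\sigma}$, independently of $Q$; and (iii) $\|f\|_{L^p(\omega_Q)}\lesssim|Q|^{\lambda/(dp)}\|f\|_{\mathcal{L}^{p,\lambda}(w)}$ uniformly in $Q$. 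Given (i)--(iii), the already-established $L^p(\omega_Q)$-boundedness of $T$ (with norm uniform in $Q$ thanks to (ii)) chains to
\[
|Q|^{-\lambda/(dp)}\|Tf\|_{L^p(Q,w)}\leq|Q|^{-\lambda/(dp)}\|Tf\|_{L^p(\omega_Q)}\lesssim|Q|^{-\lambda/(dp)}\|f\|_{L^p(\omega_Q)}\lesssim\|f\|_{\mathcal{L}^{p,\lambda}(w)},
\]
and taking the supremum over $Q$ finishes the proof.

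The hard part is constructing $\omega_Q$ so that (i)--(iii) hold simultaneously with $Q$-independent constants. A natural template is a weight that coincides with $w$ on (a small enlargement of) $Q$ and decays controllably outside, modified so that it is bounded below well enough to be a genuine Muckenhoupt weight. Verifying (ii) constrains how fast $\omega_Q$ may decay outside $Q$, while (iii) is proved by splitting $\R^d$ into the dyadic annuli $2^{k+1}Q\setminus 2^kQ$, using the Morrey bound on each, and summing. In that summation the reverse-H\"older property enters through the comparison $w(2^kQ)\lesssim[w]_{RH_\sigma}|2^kQ|\langle w\rangle_{2^kQ}$, and the resulting geometric series converges precisely when $\lambda<d/\sigma'=d-d/\sigma$, matching exactly the range in the hypothesis. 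The technical heart of the argument is to exhibit a choice of $\omega_Q$ (decay parameter and tail shape) that meets the $A_{p/\kappa}$ constraint from (ii) and the Morrey summation constraint from (iii) at the same time; the restriction $\lambda<d-d/\sigma$ guarantees that the admissible set of parameters is nonempty.
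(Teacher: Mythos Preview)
The paper does not give its own proof of this statement: Theorem~\ref{thm:extr. bdd. 1} is quoted verbatim from Duoandikoetxea--Rosenthal \cite[Theorem~1.1]{DR} and used as a black box, so there is no ``paper's proof'' to compare against.

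That said, your sketch is essentially the strategy of the original paper \cite{DR}. The two-step structure---first extrapolate in $L^p$ via Rubio de Francia, then transfer to the Morrey scale by inserting, for each cube $Q$, an auxiliary global weight $\omega_Q$ that (i) dominates $w$ on $Q$, (ii) lies in $A_{p/\kappa}$ with $Q$-uniform characteristic, and (iii) captures the Morrey norm through a dyadic-annulus summation---is precisely their mechanism. One small inaccuracy: the line you wrote about the reverse-H\"older condition entering via ``$w(2^kQ)\lesssim[w]_{RH_\sigma}|2^kQ|\langle w\rangle_{2^kQ}$'' is a tautology (it is just the definition of the average). In \cite{DR} the $RH_\sigma$ hypothesis is used not in the annular summation for (iii) but in the construction of $\omega_Q$ itself, to guarantee the \emph{uniform} $A_{p/\kappa}$ bound in (ii); the constraint $\lambda<d-d/\sigma$ arises from matching the admissible decay rate of $\omega_Q$ (dictated by (ii)) against the growth rate needed for the geometric series in (iii) to converge. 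So your outline is correct in spirit but slightly misplaces where the $RH_\sigma$ assumption does its work.
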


\begin{theorem}[\cite{DR}, Corollary 3.2]\label{thm:extr. bdd. 2}
Let $1\leq p_{-}\leq p_1\leq p_{+}<\infty$ and $T$ be a defined and bounded operator on $L^{p_1}(w)$ for all $w\in A_{p_1/p_{-}}(\R^d)\cap RH_{(p_{+}/p_1)'}(\R^d)$. Then for all $p\in(p_{-},p_{+})$ (and also $p=p_{-}$ if $p_1=p_{-}$) and $\sigma>(p_{+}/p)'$, $T$ is bounded on $\mathcal L^{p,\lambda}(w)$ for all $0<\lambda<d\bigg(1-\frac{p}{p_{+}}-\frac{1}{\sigma}\bigg)$ and all $w\in A_{p/p_{-}}(\R^d)\cap RH_{\sigma}(\R^d)$.
\end{theorem}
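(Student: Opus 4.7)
The plan is a two-step reduction. First, extend the given weighted Lebesgue estimate to a full scale via Rubio de Francia's extrapolation. Second, transport this to the Morrey scale by a cube-by-cube argument that constructs a suitable auxiliary weight.

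\emph{Step 1.} Apply Theorem \ref{thm:limited range extrp.} to the hypothesis. This upgrades the assumed boundedness on $L^{p_1}(w)$ for all $w \in A_{p_1/p_{-}}(\R^d) \cap RH_{(p_{+}/p_1)'}(\R^d)$ to $T : L^q(v) \to L^q(v)$ for every $q \in (p_{-}, p_{+})$ and every $v \in A_{q/p_{-}}(\R^d) \cap RH_{(p_{+}/q)'}(\R^d)$. Since $\sigma > (p_{+}/p)'$ and reverse H\"older classes are nested, the target weight satisfies $w \in A_{p/p_{-}} \cap RH_{(p_{+}/p)'}$; in particular $T$ is bounded on $L^p(w)$.

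\emph{Step 2.} Fix a cube $Q_0 \subset \R^d$; the goal is
$$|Q_0|^{-\lambda/(dp)} \Big(\int_{Q_0} |Tf|^p w\Big)^{1/p} \lesssim \|f\|_{\mathcal L^{p,\lambda}(w)},$$
uniformly in $Q_0$. The idea is to build an auxiliary weight $\omega = \omega_{Q_0}$ with three properties: (i) $\omega \ge c\, w \chi_{Q_0}$ pointwise, so that $\int_{Q_0} |Tf|^p w \le C \int |Tf|^p \omega$; (ii) $\omega \in A_{p/p_{-}} \cap RH_{(p_{+}/p)'}$ with characteristic bounded independently of $Q_0$, so that Step 1 yields $\int |Tf|^p \omega \lesssim \int |f|^p \omega$; and (iii) $\int |f|^p \omega \le C |Q_0|^{\lambda/d} \|f\|_{\mathcal L^{p,\lambda}(w)}^p$, so that the Morrey hypothesis on $f$ transfers. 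A natural candidate is $\omega_{Q_0}(x) = w(x) \, \phi\big((x - x_{Q_0})/\ell(Q_0)\big)$ with $\phi$ a radial, polynomially decaying envelope; property (iii) is then checked by decomposing the integral over dyadic annuli $2^{k+1} Q_0 \setminus 2^k Q_0$ and applying the Morrey definition on each piece.

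The main obstacle is the calibration of $\phi$. The decay of $\phi$ has to be fast enough to beat the growth factor $|2^k Q_0|^{\lambda/d} = 2^{k\lambda} |Q_0|^{\lambda/d}$ appearing in (iii), but slow enough to preserve the uniform Muckenhoupt and reverse H\"older characteristics required in (ii). Precisely this tension between (ii) and (iii) produces the numerical constraints $\sigma > (p_{+}/p)'$ and $\lambda < d(1 - p/p_{+} - 1/\sigma)$ in the statement, and verifying both conditions by direct computation on dyadic shells would be the most technically demanding step of the argument.
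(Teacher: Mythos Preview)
The paper does not prove this statement; Theorem~\ref{thm:extr. bdd. 2} is quoted from \cite[Corollary~3.2]{DR} and used as a black box in the applications of Sections~\ref{sec:4}--\ref{sec:7}. There is therefore no proof in the present paper to compare against.

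That said, your two-step plan---limited-range Rubio de Francia extrapolation on the Lebesgue scale, followed by a cube-by-cube transfer to the Morrey scale via an auxiliary weight $\omega_{Q_0}$---is the strategy of \cite{DR}. You also correctly identify the crux: the tension between the decay of the envelope (needed for the annular summation in (iii)) and the preservation of uniform $A_{p/p_-}\cap RH_{(p_+/p)'}$ constants (needed for (ii)) is exactly what produces the numerical constraint $\lambda<d(1-p/p_+-1/\sigma)$. One implementation remark: in the literature the envelope is not taken to be a smooth radial $\phi$ but rather a power of $M(\chi_{Q_0})$ or an explicit power weight centred at $Q_0$, because the $A_p$ and $RH_\sigma$ constants of products of $w$ with such weights can be tracked via the Jones factorisation and known properties of power weights; a generic Schwartz-type $\phi$ would make your property (ii) considerably harder to verify directly. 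Apart from that detail your sketch is on the right track.
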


\begin{remark}
In \cite{DR}, Theorems \ref{thm:extr. bdd. 1} and \ref{thm:extr. bdd. 2} are stated in terms of non-negative measurable pairs of functions $(f,g)$. This has the advantage of providing immediately several different versions. In our applications below, we will apply these with $[b,T]$ in place of $T$. 
\end{remark}

In \cite{ABKP,BMMST}, the following general results on weighted boundedness about commutators were obtained:

\begin{theorem}[\cite{ABKP}]\label{thm:commuBasic}
Let $1\leq\kappa<p_1<\infty$, and $T$ be a linear operator defined and bounded on $L^{p_1}(\tilde w)$ for all $\tilde w\in A_{p_1/\kappa}(\R^d)$, with the operator norm dominated by some increasing function of $[\tilde w]_{A_{p_1/\kappa}}$. Suppose also that $b\in\BMO(\R^d)$. Then also $[b,T]$ extends to a bounded linear operator on $L^{p_1}(\tilde w)$ for all $\tilde w\in A_{p_1/\kappa}(\R^d)$, and its operator norm is dominated by another increasing function of $[\tilde w]_{A_{p_1/\kappa}}$.
\end{theorem}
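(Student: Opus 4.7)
The plan is to run the classical Coifman--Rochberg--Weiss conjugation argument (the original strategy of \cite{ABKP}). For $b\in\BMO(\R^d)$, introduce the complex analytic family
\begin{equation*}
  T_z f:=e^{zb}\,T(e^{-zb}f),\qquad z\in\C,
\end{equation*}
so that, at least formally, $[b,T]f=\frac{d}{dz}T_z f|_{z=0}$. Fixing a nice test function $f\in C_c^\infty(\R^d)$, the map $z\mapsto T_z f$ takes values in $L^{p_1}(\tilde w)$ and is holomorphic in a neighbourhood of $z=0$; by Cauchy's formula, for any $\eps>0$,
\begin{equation*}
  [b,T]f=\frac{1}{2\pi\img}\oint_{|z|=\eps}\frac{T_z f}{z^2}\,dz,\qquad\text{whence}\qquad \Norm{[b,T]f}{L^{p_1}(\tilde w)}\leq\eps^{-1}\sup_{|z|=\eps}\Norm{T_z f}{L^{p_1}(\tilde w)}.
\end{equation*}
Thus the task reduces to bounding $T_z$ uniformly on $L^{p_1}(\tilde w)$ for $z$ in a small disk of radius $\eps=\eps([\tilde w]_{A_{p_1/\kappa}},\Norm{b}{\BMO})$.

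The pointwise identity $|e^{-zb}|^{p_1}e^{p_1\Re(z)b}\equiv 1$ gives the two equalities
\begin{equation*}
  \Norm{T_z f}{L^{p_1}(\tilde w)}=\Norm{T(e^{-zb}f)}{L^{p_1}(\tilde w\,e^{p_1\Re(z)b})},\qquad \Norm{e^{-zb}f}{L^{p_1}(\tilde w\,e^{p_1\Re(z)b})}=\Norm{f}{L^{p_1}(\tilde w)}.
\end{equation*}
Combined with the hypothesis that $\Norm{T}{L^{p_1}(u)\to L^{p_1}(u)}\leq\phi([u]_{A_{p_1/\kappa}})$ for some increasing $\phi$, the bound on $T_z$ will follow from the weight perturbation lemma: \emph{there exists $\eps>0$ depending only on $[\tilde w]_{A_{p_1/\kappa}}$ and $\Norm{b}{\BMO}$ such that $\tilde w\,e^{sb}\in A_{p_1/\kappa}(\R^d)$ with $[\tilde w\,e^{sb}]_{A_{p_1/\kappa}}\leq 2[\tilde w]_{A_{p_1/\kappa}}$ for all real $|s|\leq p_1\eps$.} This is a standard consequence of the John--Nirenberg inequality (which controls $\ave{e^{s(b-\ave{b}_Q)}}_Q$ for $|s|\lesssim\Norm{b}{\BMO}^{-1}$) together with the self-improving open property of the Muckenhoupt class $A_{p_1/\kappa}$.

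Granted the perturbation lemma, chaining the above estimates yields $\Norm{T_z f}{L^{p_1}(\tilde w)}\leq\phi(2[\tilde w]_{A_{p_1/\kappa}})\Norm{f}{L^{p_1}(\tilde w)}$ uniformly for $|z|\leq\eps$, and substituting into the Cauchy bound produces
\begin{equation*}
  \Norm{[b,T]f}{L^{p_1}(\tilde w)}\leq\eps^{-1}\phi(2[\tilde w]_{A_{p_1/\kappa}})\Norm{f}{L^{p_1}(\tilde w)},
\end{equation*}
whose right-hand side is an increasing function of $[\tilde w]_{A_{p_1/\kappa}}$, as required; the bound then extends to general $f\in L^{p_1}(\tilde w)$ by density. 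The main obstacle is the quantitative perturbation lemma, specifically the explicit dependence of $\eps$ on the weight characteristic, which has to come from a genuinely quantitative use of John--Nirenberg rather than from the merely qualitative openness of $A_{p_1/\kappa}$. A secondary technical point is justifying the $L^{p_1}(\tilde w)$-valued holomorphy of $z\mapsto T_z f$ on a small disk around the origin, for which working on the dense subspace $C_c^\infty(\R^d)$ and combining the boundedness of $T$ with dominated convergence suffices.
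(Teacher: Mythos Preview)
The paper does not give its own proof of this theorem: it is quoted verbatim from \cite{ABKP}, with the remark that ``the statement in \cite[Theorem~2.13]{ABKP} is somewhat more general, but the above particular case is easily seen to be contained in it.'' So there is no in-paper argument to compare against; your sketch is precisely the Coifman--Rochberg--Weiss conjugation method that \cite{ABKP} itself employs, and the outline is correct. The two technical points you flag---quantitative stability of $A_{p_1/\kappa}$ under multiplication by $e^{sb}$ (via reverse H\"older plus John--Nirenberg) and the holomorphy of $z\mapsto T_zf$ on a dense class---are exactly the ones that need care, and you have identified them accurately. One small addendum: it is cleanest to first take $b\in L^\infty(\R^d)$ (so that holomorphy and membership of $e^{-zb}f$ in the relevant weighted spaces are immediate), obtain the bound with constants independent of $\Norm{b}{\infty}$, and then pass to general $b\in\BMO(\R^d)$ by truncation.
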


The statement in \cite[Theorem 2.13]{ABKP} is somewhat more general, but the above particular case is easily seen to be contained in it.

\begin{theorem}[\cite{BMMST}, Corollary 5.3]\label{coro:restricted}
Let $1\leq p_{-}<p_1<p_{+}\leq\infty$, and $T$ be a linear operator bounded on $L^{p_1}(\tilde w)$ for all $\tilde w\in A_{\frac{p_1}{p_{-}}}(\R^d)\cap RH_{\big(\frac{p_{+}}{p_1}\big)'}(\R^d)$. If $b\in\BMO(\R^d)$, then $[b,T]$ is bounded on $L^{p_1}(\tilde w)$ for all $\tilde w\in A_{\frac{p_1}{p_{-}}}(\R^d)\cap RH_{\big(\frac{p_{+}}{p_1}\big)'}(\R^d)$.
\end{theorem}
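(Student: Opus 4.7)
The plan is to adapt the classical complex-variables conjugation method of Coifman--Rochberg--Weiss to the limited-range weight class $A_{p_1/p_{-}}(\R^d)\cap RH_{(p_{+}/p_1)'}(\R^d)$. For $z\in\C$ and $b\in\BMO(\R^d)$, introduce the conjugated family
$$T_z f:=e^{zb}\,T(e^{-zb}f),\qquad T_0=T,$$
so that $[b,T]f=\frac{d}{dz}\big|_{z=0}T_z f$, and then use Cauchy's integral formula to write
$$[b,T]f=\frac{1}{2\pi\img}\oint_{|z|=r}\frac{T_z f}{z^2}\,\ud z.$$
This reduces the desired bound for $\|[b,T]f\|_{L^{p_1}(\tilde w)}$ to the uniform estimate $\sup_{|z|=r}\|T_z f\|_{L^{p_1}(\tilde w)}\lesssim\|f\|_{L^{p_1}(\tilde w)}$ for some small radius $r>0$ to be chosen later.

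Next, I would translate this into a weighted bound on $T$ itself. Setting $\tilde w_z:=e^{p_1\Re(z) b}\tilde w$, the elementary identities
$$\|T_z f\|_{L^{p_1}(\tilde w)}=\|T(e^{-zb}f)\|_{L^{p_1}(\tilde w_z)},\qquad \|e^{-zb}f\|_{L^{p_1}(\tilde w_z)}=\|f\|_{L^{p_1}(\tilde w)}$$
show it suffices to bound $T$ on $L^{p_1}(\tilde w_z)$ uniformly for $|z|\le r$, with constants depending only on the weight characteristic of $\tilde w$ and on $\|b\|_\BMO$.

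The main obstacle is the \emph{openness} of the class $A_{p_1/p_{-}}(\R^d)\cap RH_{(p_{+}/p_1)'}(\R^d)$ under the perturbation $\tilde w\mapsto e^{t b}\tilde w$ for small $|t|$. This rests on the John--Nirenberg inequality: for $b\in\BMO(\R^d)$, the function $e^{t b}$ satisfies reverse-H\"older estimates of every order on cubes as long as $|t|\lesssim 1/\|b\|_\BMO$. A quantitative bookkeeping with the definitions of $A_p$ and $RH_\sigma$, together with the factorization of the limited-range class into an $A_p$ factor and a reverse-H\"older factor, then shows that $\tilde w_z$ still belongs to the same class $A_{p_1/p_{-}}(\R^d)\cap RH_{(p_{+}/p_1)'}(\R^d)$, with characteristic depending only on that of $\tilde w$, on $\|b\|_\BMO$, and on $r$, provided $r$ is chosen small enough.

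To close the argument, I would combine this openness with the quantitative form of the hypothesized weighted boundedness of $T$: as is standard from the proof of the Auscher--Martell limited-range extrapolation theorem, the operator norm of $T$ on $L^{p_1}(\tilde w_z)$ is dominated by an increasing function of the weight characteristic. This yields the uniform bound $\sup_{|z|=r}\|T_z\|_{L^{p_1}(\tilde w)\to L^{p_1}(\tilde w)}\lesssim 1$, and the Cauchy representation above then gives the boundedness of $[b,T]$ on $L^{p_1}(\tilde w)$. The holomorphy of $z\mapsto T_z f$ needed to justify the contour integral follows by density from the fact that for $f\in L^{p_1}(\tilde w)\cap L^{p_1}(\tilde w_{r})\cap L^{p_1}(\tilde w_{-r})$ (a dense subclass in view of the openness) the map is $L^{p_1}(\tilde w)$-valued holomorphic in a neighborhood of the origin.
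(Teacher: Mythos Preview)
The paper does not prove this statement; it quotes it as Corollary~5.3 of \cite{BMMST} and uses it as a black box in Corollary~\ref{cor:commuBasic 2}. Your outline is precisely the approach of \cite{BMMST}: the Coifman--Rochberg--Weiss conjugation $T_z=e^{zb}T(e^{-zb}\cdot)$ together with Cauchy's formula, combined with the stability of $A_{p_1/p_-}\cap RH_{(p_+/p_1)'}$ under multiplication by $e^{tb}$ for small $|t|$ via John--Nirenberg. So there is nothing in the present paper to compare against, and your method matches the cited source.

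One step deserves more care. The hypothesis is only that $T$ is bounded on $L^{p_1}(\tilde w)$ for each $\tilde w$ in the class, with no stated dependence of the operator norm on the weight characteristic; your Cauchy-integral bound, however, requires a \emph{uniform} estimate over $|z|=r$. Your appeal to ``the proof of Auscher--Martell extrapolation'' to manufacture such a dependence is not automatic: extrapolation transports bounds between exponents, but at a fixed exponent it does not by itself upgrade a qualitative hypothesis to a quantitative one. In \cite{BMMST} this passage is handled explicitly, and in the full-range analogue quoted here as Theorem~\ref{thm:commuBasic} (from \cite{ABKP}) the quantitative dependence is part of the hypothesis. You should either add that assumption or supply the self-improvement argument that converts qualitative boundedness on the whole class into a bound depending only on the characteristic; once that is in place, the rest of your argument is correct.
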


A combination of Theorems \ref{thm:main}, \ref{thm:extr. bdd. 1}, \ref{thm:extr. bdd. 2}, \ref{thm:commuBasic} and \ref{coro:restricted} immediately gives the following two corollaries:
 
\begin{corollary}\label{cor:commuBasic 1}

Let $1\leq\kappa<p_{+}<\infty$, and $1<\sigma_1<\infty$, and 
\begin{equation*}
  p_{-}=\max\bigg\{\kappa,p_{+}\bigg(1-\frac{1}{\sigma_1}\bigg)\bigg\},
\end{equation*}
and $0<\lambda_1<d-\frac{d}{\sigma_1}$. Suppose moreover that:
\begin{enumerate}
  \item $T$ is a linear operator defined and bounded on $L^{\tilde p_1}(\tilde w)$ for some $\tilde p_1\in(\kappa,\infty)$ and for all $\tilde w\in A_{\tilde p_1/\kappa}(\R^d)$, with the operator norm dominated by some increasing function of $[\tilde w]_{A_{\tilde p_1/\kappa}}$,
  \item the commutator $[b,T]$ is compact on $\mathcal L^{p_1,\lambda_1}(w_1)$ for some 
  \begin{equation*}
  b\in\BMO(\R^d),
  \end{equation*}
  some $p_1\in[p_{-},p_{+}]$ and some $w_1\in A_{p_1/p_{-}}(\R^d)\cap RH_{(p_{+}/p_1)'}(\R^d)$.
\end{enumerate}
Then $[b,T]$ is compact on $\mathcal L^{p,\lambda_1}(w)$  for the same $b$, for all $p\in(p_{-},p_{+})$ and all $w\in A_{p/p_{-}}(\R^d)\cap RH_{(p_{+}/p)'}(\R^d)$.
\end{corollary}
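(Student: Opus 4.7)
The plan is to reduce the claim to a direct application of Theorem \ref{thm:main} with $[b,T]$ playing the role of the operator in that theorem. The assumed compactness of $[b,T]$ on $\mathcal L^{p_1,\lambda_1}(w_1)$ for some $p_1\in[p_{-},p_{+}]$ and $w_1\in A_{p_1/p_{-}}(\R^d)\cap RH_{(p_{+}/p_1)'}(\R^d)$ is exactly the compactness hypothesis in Theorem \ref{thm:main}. So the real content is to produce the required boundedness of $[b,T]$ on $\mathcal L^{p,\lambda_1}(w)$ for every $p\in(p_{-},p_{+})$ and every $w\in A_{p/p_{-}}(\R^d)\cap RH_{(p_{+}/p)'}(\R^d)$.

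First I would upgrade the weighted $L^{\tilde p_1}$ boundedness of $T$ to a weighted $L^{\tilde p_1}$ boundedness of $[b,T]$. Hypothesis (1) together with $b\in\BMO(\R^d)$ and Theorem \ref{thm:commuBasic} immediately yield that $[b,T]$ is bounded on $L^{\tilde p_1}(\tilde w)$ for all $\tilde w\in A_{\tilde p_1/\kappa}(\R^d)$, with operator norm controlled by an increasing function of $[\tilde w]_{A_{\tilde p_1/\kappa}}$. Next, I would invoke the Morrey extrapolation Theorem \ref{thm:extr. bdd. 1} with this $[b,T]$ (and with the choice $\sigma=\sigma_1$, noting the given bound $0<\lambda_1<d-d/\sigma_1$) to conclude that $[b,T]$ is bounded on $\mathcal L^{p,\lambda_1}(w)$ for every $p\in(\kappa,\infty)$ and every $w\in A_{p/\kappa}(\R^d)\cap RH_{\sigma_1}(\R^d)$.

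The step to watch is verifying that this boundedness class actually covers the class appearing in Theorem \ref{thm:main}, i.e.\ that
\begin{equation*}
  A_{p/p_{-}}(\R^d)\cap RH_{(p_{+}/p)'}(\R^d)\subset A_{p/\kappa}(\R^d)\cap RH_{\sigma_1}(\R^d)
  \quad\text{for every }p\in(p_{-},p_{+}).
\end{equation*}
Since $p_{-}\ge\kappa$ by definition, the inclusion $A_{p/p_{-}}\subset A_{p/\kappa}$ is automatic. For the reverse H\"older part, a short computation gives $(p_{+}/p)'=p_{+}/(p_{+}-p)\ge\sigma_1$ precisely when $p\ge p_{+}(1-1/\sigma_1)$, which is guaranteed by the other half of the definition $p_{-}=\max\{\kappa,p_{+}(1-1/\sigma_1)\}$ and by $p>p_{-}$. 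Because $RH_s\subset RH_r$ whenever $r\le s$, the containment follows. This is really the only nontrivial bookkeeping in the argument, and it is where the specific choice of $p_-$ is used.

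Having established that $[b,T]$ is bounded on $\mathcal L^{p,\lambda_1}(w)$ for the full required range of $(p,w)$, and recalling that compactness of $[b,T]$ on $\mathcal L^{p_1,\lambda_1}(w_1)$ is assumed, I would apply Theorem \ref{thm:main} (with $\lambda=\lambda_1$ and the given $p_{-},p_{+}$) directly to $[b,T]$ to obtain the compactness of $[b,T]$ on $\mathcal L^{p,\lambda_1}(w)$ for every $p\in(p_{-},p_{+})$ and every $w\in A_{p/p_{-}}(\R^d)\cap RH_{(p_{+}/p)'}(\R^d)$, which is the desired conclusion.
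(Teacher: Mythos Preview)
Your proof is correct and follows essentially the same route as the paper: apply Theorem~\ref{thm:commuBasic} to transfer the weighted $L^{\tilde p_1}$ bounds from $T$ to $[b,T]$, extrapolate via Theorem~\ref{thm:extr. bdd. 1} with $\sigma=\sigma_1$, and then feed the resulting boundedness together with the assumed compactness into Theorem~\ref{thm:main}. The only difference is that you spell out explicitly the inclusion $A_{p/p_{-}}\cap RH_{(p_{+}/p)'}\subset A_{p/\kappa}\cap RH_{\sigma_1}$ (and in particular where the definition of $p_{-}$ is used), which the paper compresses into the phrase ``In particular''.
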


\begin{proof}
We verify the assumptions of Theorem \ref{thm:main} for the fixed numbers $\lambda_1,p_1,\sigma_1,\kappa,p_{+},d$ and the weight $w_1$ appearing in the statement of the Corollary, and the operator $[b,T]$ in place of $T$. By Theorem \ref{thm:commuBasic}, $[b,T]$ is bounded on $L^{\tilde p_1}(\tilde w)$ for the same $\tilde p_1\in(\kappa,\infty)$ and all $\tilde w\in A_{p_1/\kappa}(\R^d)$. Then, by Theorem \ref{thm:extr. bdd. 1} with $[b,T]$ in place of $T$, $[b,T]$ is bounded on $\mathcal L^{p,\lambda}(\tilde w)$ for all $p\in(\kappa,\infty)$, all $0<\lambda<d-\frac{d}{\sigma}$ and all $\tilde w\in A_{p/\kappa}(\R^d)\cap RH_{\sigma}(\R^d)$. By choosing $\lambda=\lambda_1$ and $\sigma=\sigma_1$ to be the fixed numbers appearing in the statement of the Corollary, $[b,T]$ is bounded on $\mathcal L^{p,\lambda_1}(\tilde w)$ for all $p\in(\kappa,\infty)$ and all $\tilde w\in A_{p/\kappa}(\R^d)\cap RH_{\sigma_1}(\R^d)$. In particular, $[b,T]$ is bounded on $\mathcal L^{p,\lambda_1}(\tilde w)$ for all $p\in(p_{-},p_{+})$ and all $\tilde w\in A_{p/p_{-}}(\R^d)\cap RH_{(p_{+}/p)'}(\R^d)$ where we recall that $p_{-}=\max\bigg\{\kappa,p_{+}\bigg(1-\frac{1}{\sigma_1}\bigg)\bigg\}$ and  $1\leq p_{-}<p_{+}<\infty$.
By assumption, $[b,T]$ is compact on $\mathcal L^{p_1,\lambda_1}(w_1)$ for some
$p_1\in[p_{-},p_{+}]$ and some $w_1\in A_{p_1/p_{-}}(\R^d)\cap RH_{(p_{+}/p_1)'}(\R^d)$. Thus the assumptions of Theorem \ref{thm:main} hold for the operator $[b,T]$ in place of $T$. Hence, the conclusion of Theorem \ref{thm:main} gives the compactness of $[b,T]$ on $\mathcal L^{p,\lambda_1}(w)$ for all $p\in(p_{-},p_{+})$ and all $w\in A_{p/p_{-}}(\R^d)\cap RH_{(p_{+}/p)'}(\R^d)$.
\end{proof}

\begin{corollary}\label{cor:commuBasic 2}
Let $1\leq p_{-}<p_{+}<\infty$, and $1<\sigma_1<\infty$, and $0<\lambda_1<d-\frac{d}{\sigma_1}$, and 
\begin{equation*}
   q_{+}=p_{+}\bigg(1-\frac{1}{\sigma_1}-\frac{\lambda_1}{d}\bigg),\qquad q_{-}=\max\bigg\{q_{+}\bigg(1-\frac{1}{\sigma_1}\bigg),p_{-}\bigg\}.
\end{equation*}
Suppose moreover that:
\begin{enumerate}
  \item $T$ is a linear operator defined and bounded on $L^{\tilde p_1}(\tilde w)$ for some $\tilde p_1\in(p_{-},p_{+})$ and for all $\tilde w\in A_{\tilde p_1/p_{-}}(\R^d)\cap RH_{(p_{+}/\tilde p_1)'}(\R^d)$,
  \item the commutator $[b,T]$ is compact on $\mathcal L^{p_1,\lambda_1}(w_1)$ for some 
  \begin{equation*}
  b\in\BMO(\R^d),
  \end{equation*}
  some $p_1\in[q_{-},q_{+}]$ and some $w_1\in A_{p_1/q_{-}}(\R^d)\cap RH_{(q_{+}/p_1)'}(\R^d)$.
\end{enumerate}
Then $[b,T]$ is compact on $\mathcal L^{p,\lambda_1}(w)$ for the same $b$, for all $p\in(q_{-},q_{+})$ and all $w\in A_{p/q_{-}}(\R^d)\cap RH_{(q_{+}/p)'}(\R^d)$.
\end{corollary}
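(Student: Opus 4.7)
The plan is to mirror the argument of Corollary~\ref{cor:commuBasic 1} but substitute Theorem~\ref{coro:restricted} for Theorem~\ref{thm:commuBasic} (so as to handle the limited-range $RH$ condition) and substitute Theorem~\ref{thm:extr. bdd. 2} for Theorem~\ref{thm:extr. bdd. 1}. The ultimate goal is to verify the hypotheses of Theorem~\ref{thm:main} for the operator $[b,T]$ with the role of the endpoints $(p_-,p_+)$ played by $(q_-,q_+)$.

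First, I would invoke Theorem~\ref{coro:restricted} on the commutator: since $T$ is bounded on $L^{\tilde p_1}(\tilde w)$ for all $\tilde w\in A_{\tilde p_1/p_{-}}(\R^d)\cap RH_{(p_{+}/\tilde p_1)'}(\R^d)$ and $b\in\BMO(\R^d)$, the commutator $[b,T]$ enjoys the same weighted $L^{\tilde p_1}$-boundedness. Next, I would feed this family of weighted $L^{\tilde p_1}$-estimates into Theorem~\ref{thm:extr. bdd. 2} (with $[b,T]$ in place of $T$), choosing $\sigma=\sigma_1$ and $\lambda=\lambda_1$. The requirement $0<\lambda_1<d\bigl(1-\tfrac{p}{p_+}-\tfrac{1}{\sigma_1}\bigr)$ rearranges to $p<p_{+}\bigl(1-\tfrac{1}{\sigma_1}-\tfrac{\lambda_1}{d}\bigr)=q_{+}$, and the condition $\sigma_1>(p_{+}/p)'$ then follows automatically because $q_{+}<p_{+}(1-1/\sigma_1)$. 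This yields the boundedness of $[b,T]$ on $\mathcal L^{p,\lambda_1}(\tilde w)$ for all $p\in(p_{-},q_{+})$ and all $\tilde w\in A_{p/p_{-}}(\R^d)\cap RH_{\sigma_1}(\R^d)$.

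Third, I need the inclusion
\begin{equation*}
  A_{p/q_{-}}(\R^d)\cap RH_{(q_{+}/p)'}(\R^d)\subseteq A_{p/p_{-}}(\R^d)\cap RH_{\sigma_1}(\R^d)
\end{equation*}
for every $p\in(q_{-},q_{+})$. The $A_p$-part is immediate from $q_{-}\geq p_{-}$, which forces $p/q_{-}\leq p/p_{-}$. The reverse-H\"older part reduces to checking $(q_{+}/p)'\geq\sigma_1$, equivalently $p\geq q_{+}(1-1/\sigma_1)$, and this is precisely the reason why $q_{-}$ is defined as $\max\{q_{+}(1-1/\sigma_1),p_{-}\}$. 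Combining with the previous step, $[b,T]$ is bounded on $\mathcal L^{p,\lambda_1}(w)$ for every $p\in(q_{-},q_{+})$ and every $w\in A_{p/q_{-}}(\R^d)\cap RH_{(q_{+}/p)'}(\R^d)$.

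Finally, I would apply Theorem~\ref{thm:main} to $[b,T]$ with the endpoints $p_{-},p_{+}$ of that theorem replaced by $q_{-},q_{+}$, the weighted $\mathcal L^{p,\lambda_1}$-boundedness hypothesis being supplied by the previous paragraph and the single weighted compactness hypothesis being supplied directly by assumption~(2) of the corollary. The conclusion then delivers the claimed compactness of $[b,T]$ on $\mathcal L^{p,\lambda_1}(w)$ for all $p\in(q_{-},q_{+})$ and all $w\in A_{p/q_{-}}(\R^d)\cap RH_{(q_{+}/p)'}(\R^d)$. The only real obstacle in this argument is the bookkeeping of Step~3: one must keep in mind that moving to a stronger $RH$-class is a weight restriction but gives a weaker reverse-H\"older condition, and the definition of $q_{-}$ is tailored exactly so this inclusion goes in the correct direction.
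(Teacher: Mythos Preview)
Your proposal is correct and follows essentially the same route as the paper's own proof: apply Theorem~\ref{coro:restricted} to pass the weighted $L^{\tilde p_1}$-bounds to $[b,T]$, feed these into Theorem~\ref{thm:extr. bdd. 2} with $\sigma=\sigma_1$ and $\lambda=\lambda_1$ to obtain boundedness of $[b,T]$ on $\mathcal L^{p,\lambda_1}(\tilde w)$ for $p\in(p_-,q_+)$ and $\tilde w\in A_{p/p_-}\cap RH_{\sigma_1}$, then restrict to $p\in(q_-,q_+)$ and the smaller weight class $A_{p/q_-}\cap RH_{(q_+/p)'}$, and finally invoke Theorem~\ref{thm:main}. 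Your Step~3 makes explicit the weight-class inclusion that the paper records only with the phrase ``In particular'', but the argument is the same.
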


\begin{proof}
We verify the assumptions of Theorem \ref{thm:main} for the fixed numbers $\lambda_1,p_1,\sigma_1,p_{-},p_{+},d$ and the weight $w_1$ appearing in the statement of the Corollary, and the operator $[b,T]$ in place of $T$. By Theorem \ref{coro:restricted}, $[b,T]$ is bounded on  $L^{\tilde p_1}(\tilde w)$ for the same $\tilde p_1\in(p_{-},p_{+})$ and for all $\tilde w\in A_{\tilde p_1/p_{-}}(\R^d)\cap RH_{(p_{+}/\tilde p_1)'}(\R^d)$. Then, by Theorem \ref{thm:extr. bdd. 2} with $[b,T]$ in place of $T$, for all $p\in(p_{-},p_{+})$ and $\sigma>(p_{+}/p)'$, $[b,T]$ is bounded on $\mathcal L^{p,\lambda}(\tilde w)$, for all  $0<\lambda<d\bigg(1-\frac{p}{p_{+}}-\frac{1}{\sigma}\bigg)$ and all $\tilde w\in A_{p/p_{-}}(\R^d)\cap RH_{\sigma}(\R^d)$. Equivalently, by expressing the range of parameter $p$ in terms of $\lambda$, $[b,T]$ is bounded on $\mathcal L^{p,\lambda}(\tilde w)$, for all
$p\in\bigg(p_{-},p_{+}\bigg(1-\frac{1}{\sigma}-\frac{\lambda}{\sigma}\bigg)\bigg)$ with $0<\lambda<d-\frac{d}{\sigma}$ and all $\tilde w\in A_{p/p_{-}}(\R^d)\cap RH_{\sigma}(\R^d)$. By choosing $\lambda=\lambda_1$ and $\sigma=\sigma_1$ to be the fixed numbers appearing in the statement of the Corollary, $[b,T]$ is bounded on $\mathcal L^{p,\lambda_1}(\tilde w)$ for all $p\in(p_{-},q_{+})$ and all $\tilde w\in A_{p/p_{-}}(\R^d)\cap RH_{\sigma_1}(\R^d)$ where we recall that $q_{+}=p_{+}\bigg(1-\frac{1}{\sigma_1}-\frac{\lambda_1}{d}\bigg)$ and $1\leq p_{-}<q_{+}<\infty$. In particular, $[b,T]$ is bounded on $\mathcal L^{p,\lambda_1}(\tilde w)$ for all $p\in(q_{-},q_{+})$ and all $\tilde w\in A_{p/q_{-}}(\R^d)\cap RH_{(q_{+}/p)'}(\R^d)$ where we recall that 
$q_{-}=\max\bigg\{q_{+}\bigg(1-\frac{1}{\sigma_1}\bigg),p_{-}\bigg\}$ and $1\leq q_{-}<q_{+}<\infty$. By assumption, $[b,T]$ is compact on $\mathcal L^{p_1,\lambda_1}(w_1)$ for some $p_1\in[q_{-},q_{+}]$ and some $w_1\in A_{p_1/q_{-}}(\R^d)\cap RH_{(q_{+}/p_1)'}(\R^d)$. Thus the assumptions of Theorem \ref{thm:main} hold for the operator $[b,T]$ in place of $T$. Hence, the conclusion of Theorem \ref{thm:main} gives the compactness of $[b,T]$ on $\mathcal L^{p,\lambda_1}(w)$ for all $p\in(q_{-},q_{+})$ and all $w\in A_{p/q_{-}}(\R^d)\cap RH_{(q_{+}/p)'}(\R^d)$.
\end{proof}

\section{Commutators of Calder\'on--Zygmund singular integrals}\label{sec:5}

In our application below, we consider Calder\'on--Zygmund singular integral operators which are defined as follows: 

$T$ is a linear operator defined on a suitable class of test functions on $\R^d$, and it has the representation
\begin{equation*}
  Tf(x)=\int_{\R^d}K(x,y)f(y)\ud y,\qquad x\notin\supp f,
\end{equation*}
where the kernel $K$ satisfies the {\em size condition} 
\begin{equation*}
  \abs{K(x,y)}\lesssim\frac{1}{\abs{x-y}^d}
\end{equation*}
and, for some $\delta\in(0,1]$, the {\em smoothness condition}
\begin{equation*}
  \abs{K(x,y)-K(z,y)}+\abs{K(y,x)-K(y,z)}\lesssim \frac{\abs{x-z}^\delta}{\abs{x-y}^{d+\delta}},
\end{equation*}
for all $x,z,y\in\R^d$ such that $\abs{x-y}>\frac12\abs{x-z}$. 
We will need the following classical result of  Coifman--Fefferman \cite{CF}:

\begin{theorem}[\cite{CF}]\label{thm:CF}
Let $T$ be a Calder\'on--Zygmund operator that extends to a bounded operator on $L^2(\R^d)$. Then $T$ extends to a bounded operator on $L^p(w)$ for all $p\in(1,\infty)$ and all $w\in A_p(\R^d)$.
\end{theorem}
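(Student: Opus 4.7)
The approach I would take is the classical good-$\lambda$ argument of Coifman--Fefferman, which reduces weighted $L^p$ bounds for $T$ to weighted $L^p$ bounds for the Hardy--Littlewood maximal operator $M$. Since Muckenhoupt's theorem characterizes the boundedness of $M$ on $L^p(w)$ by $w\in A_p(\R^d)$, the task becomes to establish a suitable level-set comparison between $T$ and $M$.

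The first step is to exploit the $L^2$ boundedness together with the size and smoothness estimates on $K$ to run the Calder\'on--Zygmund decomposition, deducing that $T$ and its maximal truncation $T^\sharp f(x):=\sup_{\varepsilon>0}\bigl|\int_{|x-y|>\varepsilon}K(x,y)f(y)\ud y\bigr|$ are of weak type $(1,1)$. Marcinkiewicz interpolation then gives unweighted $L^p$ boundedness of $T^\sharp$ for all $1<p<\infty$, which provides the ambient setting where the good-$\lambda$ argument is carried out (in particular furnishing a dense class of $f$ for which $T^\sharp f$ has finite distribution function in every reasonable sense).

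The core of the proof is to establish the good-$\lambda$ inequality
$$\bigl|\{T^\sharp f>2\lambda,\ Mf\le\gamma\lambda\}\bigr|\le C\gamma\,\bigl|\{T^\sharp f>\lambda\}\bigr|$$
for all $\lambda>0$ and all sufficiently small $\gamma>0$. This is proved by a Whitney decomposition $\{Q_j\}$ of the open set $\{T^\sharp f>\lambda\}$, splitting $f=f\chi_{2Q_j}+f\chi_{(2Q_j)^c}$ on each $Q_j$: the local part is handled through the weak type $(1,1)$ estimate from Step~1, while the far part is controlled on $Q_j$ via the H\"ormander smoothness condition, comparing values of $T^\sharp(f\chi_{(2Q_j)^c})$ at different points of $Q_j$ to a representative value and absorbing errors into $Mf$. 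To upgrade this to the weighted statement
$$w\bigl(\{T^\sharp f>2\lambda,\ Mf\le\gamma\lambda\}\bigr)\le C\gamma^{\eta}\,w\bigl(\{T^\sharp f>\lambda\}\bigr),$$
I would use $A_p(\R^d)\subset A_\infty(\R^d)$ and the quantitative consequence that $|E|/|Q|\le\delta$ implies $w(E)/w(Q)\le C\delta^\eta$ for subsets $E\subseteq Q$, applied cube-by-cube to the Whitney decomposition.

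Integrating the weighted good-$\lambda$ inequality against $p\lambda^{p-1}\ud\lambda$ and choosing $\gamma$ small enough so that $C\gamma^\eta<2^{-p}$ yields $\Norm{T^\sharp f}{L^p(w)}\le C\Norm{Mf}{L^p(w)}$, after which Muckenhoupt's theorem gives $\Norm{Mf}{L^p(w)}\le C\Norm{f}{L^p(w)}$ for $w\in A_p(\R^d)$, and the pointwise bound $|Tf|\le T^\sharp f$ (valid a.e.\ on a dense subclass) finishes the proof modulo a density argument. The main obstacle I anticipate is the \emph{a priori finiteness} issue in the integration step: the distribution function estimate lets one deduce $\Norm{T^\sharp f}{L^p(w)}\lesssim\Norm{Mf}{L^p(w)}$ only once $\Norm{T^\sharp f}{L^p(w)}$ is known to be finite, which is precisely the quantity we are trying to control. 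This is circumvented by first establishing the bound for truncated or bounded compactly supported $f$, with the distribution function a priori finite by the unweighted $L^p$ theory and standard weight estimates on sets of finite measure, and then extending by monotone convergence and density to all of $L^p(w)$.
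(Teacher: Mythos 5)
The paper does not prove Theorem \ref{thm:CF}; it simply cites Coifman--Fefferman \cite{CF} and uses the result as a black box. Your sketch reconstructs precisely the classical \cite{CF} good-$\lambda$ argument (Whitney decomposition, local part by weak $(1,1)$, far part by H\"ormander smoothness with errors absorbed into $Mf$, the $A_\infty$ upgrade, and finally integration against $p\lambda^{p-1}\,\ud\lambda$ plus Muckenhoupt's theorem), and it is correct in outline, including the a priori finiteness caveat you flag. Two small points are worth tightening. First, Marcinkiewicz interpolation between weak $(1,1)$ and $L^2$ only yields $1<p\le 2$; for $p>2$ you need duality for $T$, or the Cotlar inequality $T^\sharp f\lesssim M(Tf)+Mf$ for $T^\sharp$ (neither affects the good-$\lambda$ step itself). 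Second, for a Calder\'on--Zygmund operator in the generality used in the paper (an $L^2$-bounded operator whose kernel representation holds only \emph{off} the support of $f$), the pointwise bound $|Tf|\le T^\sharp f$ is not automatic: one only knows $Tf = T_{\mathrm{pv}}f + af$ for some $a\in L^\infty(\R^d)$, where $T_{\mathrm{pv}}$ denotes the principal-value operator with the same kernel. The extra term satisfies $|af|\lesssim Mf$, so the conclusion is unaffected, but this reduction should be stated explicitly since the theorem as quoted does not assume $T$ is given by a principal value.
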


The following result of Arai--Nakai \cite{AN}, based on Sawano and Shirai's method in \cite{SS}, provides a concrete condition to verify the assumptions of Corollary \ref{cor:commuBasic 1}:

\begin{theorem}[\cite{AN}]\label{thm:AN}
Let $0<\lambda<d$, $1<p<\infty$ and $T$ be a Calder\'on--Zygmund operator that extends to a bounded operator on $L^2(\R^d)$. Assume also that for all $f\in C_c^\infty(\R^d)$ we have $Tf(x)=\lim_{\eps\to 0}\int_{|x-y|\ge\eps}K(x,y)f(y)dy$ a.e. $x\in\R^d$. If $b\in\CMO(\R^d)$, then $[b,T]$ is compact on the unweighted $\mathcal L^{p,\lambda}(\R^d)$.
\end{theorem}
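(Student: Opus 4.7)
The plan is a classical two-step argument: first establish the norm bound $\Norm{[b,T]}{\mathcal L^{p,\lambda}\to\mathcal L^{p,\lambda}}\lesssim \Norm{b}{\BMO}$, so that a density argument reduces the compactness question for $b\in\CMO(\R^d)$ to the case $b\in C_c^\infty(\R^d)$; and then, for smooth compactly supported $b$, verify a Fr\'echet--Kolmogorov--Riesz-type precompactness criterion adapted to Morrey spaces (in the spirit of \cite{SS}).

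For the norm bound, I would chain the classical weighted estimates for $T$ with the commutator machinery and then extrapolate to Morrey. Coifman--Fefferman's Theorem \ref{thm:CF} gives $T$ bounded on $L^p(\tilde w)$ for every $\tilde w\in A_p(\R^d)$, with operator norm controlled by an increasing function of $[\tilde w]_{A_p}$; Theorem \ref{thm:commuBasic} (with $\kappa=1$) upgrades this to the boundedness of $[b,T]$ on the same spaces, with linear dependence on $\Norm{b}{\BMO}$; and Theorem \ref{thm:extr. bdd. 1} extrapolates to $\mathcal L^{p,\lambda}(\tilde w)$ for $\tilde w\in A_p(\R^d)\cap RH_\sigma(\R^d)$ whenever $\sigma>d/(d-\lambda)$. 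Specializing to $\tilde w\equiv 1$ yields the unweighted estimate. Then, picking $b_n\in C_c^\infty(\R^d)$ with $\Norm{b-b_n}{\BMO}\to 0$ (which is possible by the definition of $\CMO$), one has $[b,T]-[b_n,T]=[b-b_n,T]\to 0$ in operator norm, and since the compact operators form a norm-closed subspace of bounded operators on a Banach space, it suffices to prove the compactness of each $[b_n,T]$.

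For smooth compactly supported $b$, I would apply a Morrey-space precompactness theorem to the image of the unit ball $\mathcal F=\{[b,T]f:\Norm{f}{\mathcal L^{p,\lambda}}\le 1\}$. The usual conditions to verify are: uniform boundedness (immediate from step one); uniform vanishing at infinity, i.e.\ $\sup_{f\in\mathcal F}\Norm{\chi_{\{\abs{x}>R\}}[b,T]f}{\mathcal L^{p,\lambda}}\to 0$ as $R\to\infty$, which follows by writing $[b,T]f=bTf-T(bf)$, observing that the first term vanishes outside $\supp b$, and controlling the second for large $\abs{x}$ using the kernel decay $\abs{K(x,y)}\lesssim\abs{x-y}^{-d}$ acting on the compactly supported $bf$; and uniform translation continuity, $\sup_{f\in\mathcal F}\Norm{[b,T]f(\cdot+h)-[b,T]f(\cdot)}{\mathcal L^{p,\lambda}}\to 0$ as $\abs{h}\to 0$, obtained by splitting the difference into a $(b(x+h)-b(x))Tf(x)$ piece (bounded by $\abs{h}\Norm{\nabla b}{L^\infty}$) and a kernel-difference piece handled by the H\"older estimate $\abs{K(x+h,y)-K(x,y)}\lesssim\abs{h}^\delta\abs{x-y}^{-d-\delta}$, partitioning the integral near and far from $x$ as usual.

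The principal obstacle is precisely this last step: $\mathcal L^{p,\lambda}(\R^d)$ is neither separable nor has $C_c^\infty(\R^d)$ dense in it, so the naive Kolmogorov--Riesz characterization is insufficient, and one must either restrict to a suitable closed subspace containing the image of $[b,T]$ or invoke a Morrey-tailored criterion that includes a supplementary uniform vanishing condition on small cubes (reflecting the extra factor $\abs{Q}^{-\lambda/(dp)}$ in the Morrey norm). The Sawano--Shirai machinery of \cite{SS} supplies both the correct criterion and the quantitative estimates needed to verify it under the Morrey normalization, and \cite{AN} adapts this framework to Calder\'on--Zygmund commutators, using the assumed principal-value representation to justify the pointwise manipulations above on the dense class $C_c^\infty(\R^d)$.
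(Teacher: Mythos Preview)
The paper does not prove Theorem~\ref{thm:AN} at all: it is quoted verbatim as a result of Arai--Nakai~\cite{AN}, with the remark that it is ``based on Sawano and Shirai's method in~\cite{SS}'', and is used as a black-box input to Lemma~\ref{app2}. So there is no proof in the paper to compare your proposal against.

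That said, your outline matches the strategy that the paper attributes to~\cite{AN,SS}: a $\BMO$ operator-norm bound for $[b,T]$ on $\mathcal L^{p,\lambda}$ (which you correctly deduce from Theorems~\ref{thm:CF}, \ref{thm:commuBasic}, and~\ref{thm:extr. bdd. 1}), a density reduction to $b\in C_c^\infty$, and then a Morrey-adapted Fr\'echet--Kolmogorov criterion for $b$ smooth and compactly supported. You also correctly flag the genuine subtlety, namely that $\mathcal L^{p,\lambda}$ is non-separable and the classical Kolmogorov--Riesz theorem does not apply directly, so one needs the tailored precompactness criterion from~\cite{SS} (or its analogue in~\cite{AN}). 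Your sketch is faithful to the intended method, though of course the actual verification of the translation-continuity and tail conditions in the Morrey norm requires the detailed estimates carried out in~\cite{AN}.
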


\begin{remark}
In \cite{AN}, Theorem \ref{thm:AN} is stated and proved in the setting of generalized Morrey space $L^{(p,\varphi)}(\R^d)$, where $\varphi:\R^d\times(0,\infty)\to(0,\infty)$ is a variable growth function. This space coincides with $\mathcal L^{p,\lambda}(\R^d)$ by choosing  $\varphi(x,r)=|B(x,r)|^{\frac{\lambda}{d}-1}$, where  $B=B(x,r)$ is a ball with center $x$ and radius $r$. 
\end{remark}

By combining Corollary \ref{cor:commuBasic 1}, Theorem \ref{thm:CF} and Theorem \ref{thm:AN} we obtain the following result which appears to be new:

\begin{lemma}\label{app2}
Let $1<p_{+}<\infty$, and $1<\sigma<\infty$, and 
\begin{equation*}
  p_{-}=\max\bigg\{1,p_{+}\bigg(1-\frac{1}{\sigma}\bigg)\bigg\},
\end{equation*}
and $0<\lambda<d-\frac{d}{\sigma}$. Suppose moreover that $b\in\CMO(\R^d)$, $T$ is a Calder\'on--Zygmund operator that extends boundedly to $L^2(\R^d)$ and satisfies for all $f\in C_c^\infty(\R^d)$ the condition $Tf(x)=\lim_{\eps\to 0}\int_{|x-y|\ge\eps}K(x,y)f(y)dy$ a.e. $x\in\R^d$. Then the commutator $[b,T]$ is compact on $\mathcal L^{p,\lambda}(w)$ for all $p\in(p_{-},p_{+})$ and all $w\in A_{p/p_{-}}(\R^d)\cap RH_{(p_{+}/p)'}(\R^d)$.
\end{lemma}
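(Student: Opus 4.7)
The plan is to verify the hypotheses of Corollary \ref{cor:commuBasic 1} with the parameter choices $\kappa=1$, $\sigma_1=\sigma$, $\lambda_1=\lambda$, and $p_+$ unchanged. With $\kappa=1$, the formula $p_-=\max\{\kappa,p_+(1-1/\sigma_1)\}$ from that corollary collapses to $p_-=\max\{1,p_+(1-1/\sigma)\}$, matching exactly the $p_-$ of Lemma \ref{app2}. Since $\CMO(\R^d)\subset\BMO(\R^d)$, the hypothesis $b\in\CMO(\R^d)$ supplies a valid BMO symbol for the corollary.

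For hypothesis (1) of the corollary, I would invoke Theorem \ref{thm:CF} (Coifman--Fefferman): fixing any $\tilde p_1\in(1,\infty)$, say $\tilde p_1=2$, $T$ is bounded on $L^{\tilde p_1}(\tilde w)$ for every $\tilde w\in A_{\tilde p_1}(\R^d)=A_{\tilde p_1/\kappa}(\R^d)$. The standard quantitative form of the Coifman--Fefferman bound (as recorded e.g.\ in the references underlying Theorem \ref{thm:commuBasic}) gives an operator norm dominated by an increasing function of $[\tilde w]_{A_{\tilde p_1}}$, which is precisely the quantitative control demanded by the corollary.

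For hypothesis (2), I would pick the simplest admissible weight $w_1\equiv 1$, which lies in every Muckenhoupt class $A_q(\R^d)$ and every reverse H\"older class $RH_\sigma(\R^d)$, and in particular in $A_{p_1/p_-}(\R^d)\cap RH_{(p_+/p_1)'}(\R^d)$ for any $p_1\in[p_-,p_+]$. Because $1\leq p_-<p_+$ and $p_+>1$, the intersection $[p_-,p_+]\cap(1,\infty)$ is non-empty, so I may fix, say, $p_1=p_+$. Theorem \ref{thm:AN} of Arai--Nakai then yields the required unweighted compactness of $[b,T]$ on $\mathcal L^{p_1,\lambda}(\R^d)=\mathcal L^{p_1,\lambda}(w_1)$, since its hypotheses on $T$, $b$ and $\lambda$ are precisely those in force here (note $\lambda<d-d/\sigma<d$).

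Once both hypotheses are checked, Corollary \ref{cor:commuBasic 1} directly produces the compactness of $[b,T]$ on $\mathcal L^{p,\lambda}(w)$ for every $p\in(p_-,p_+)$ and every $w\in A_{p/p_-}(\R^d)\cap RH_{(p_+/p)'}(\R^d)$, which is the claim. There is no serious obstacle in the argument; the proof is essentially a bookkeeping exercise, and the only care needed is to align the parameters $(\kappa,\sigma_1,\lambda_1,p_\pm)$ of the corollary with those appearing in the lemma and to confirm that the ``seed'' weighted $L^{\tilde p_1}$-bound and the ``seed'' unweighted Morrey compactness are supplied respectively by Theorems \ref{thm:CF} and \ref{thm:AN}.
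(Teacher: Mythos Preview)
Your proof is correct and follows essentially the same route as the paper: both apply Corollary~\ref{cor:commuBasic 1} with $\kappa=1$, verify hypothesis~(1) via Theorem~\ref{thm:CF} and hypothesis~(2) via Theorem~\ref{thm:AN} with the unweighted choice $w_1\equiv 1$. Your version is slightly more explicit (fixing $\tilde p_1=2$ and $p_1=p_+$, and noting the quantitative dependence on $[\tilde w]_{A_{\tilde p_1}}$ required by hypothesis~(1)), but otherwise the arguments coincide.
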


\begin{proof}
Let us fix some $\sigma_1\in(1,\infty)$, $\lambda_1\in(0,d-\frac{d}{\sigma_1})$, $p_1\in[p_{-},p_{+}]$ and  $\tilde{p_1}\in(1,\infty)$ for which we verify the assumptions of Corollary \ref{cor:commuBasic 1} with $\kappa=1$. By Theorem \ref{thm:CF}, $T$ extends to a bounded operator on $L^{\tilde p_1}(\tilde w)$ for all $\tilde w\in A_{\tilde p_1}(\R^d)$. By Theorem \ref{thm:AN}, $[b,T]$ is a compact operator on $\mathcal L^{p_1,\lambda_1}(\R^d)=\mathcal L^{p_1,\lambda_1}(w_1)$ with $w_1\equiv 1\in A_{p_1/p_{-}}(\R^d)\cap RH_{(p_{+}/p_1)'}(\R^d)$.
Thus Corollary \ref{cor:commuBasic 1} applies to give the compactness of $[b,T]$ on $\mathcal L^{p,\lambda_1}(w)$ for all $p\in(p_{-},p_{+})$ and all $w\in A_{p/p_{-}}(\R^d)\cap RH_{(p_{+}/p)'}(\R^d)$.
\end{proof}

Now, we check the conditions of Lemma \ref{app2} in such a way that we obtain the following:

\begin{proof}[Proof of Theorem \ref{thm:app2}]
Under the conditions appearing in the assumptions and conclusion of Theorem, we check that we can find parameters $\sigma$ and  $p_{+},p_{-}$ as in the Lemma \ref{app2}. In particular, by choosing
$\sigma\in\bigg(\bigg(\frac{d}{\lambda}\bigg)',(t's)'\bigg]$
$p_{+}\in\bigg[t'p,\frac{\sigma'p}{s}\bigg]$, and $p_{-}=\max\bigg\{1,p_{+}\bigg(1-\frac{1}{\sigma}\bigg)\bigg\}$, Lemma \ref{app2} applies to give the compactness of $[b,T]$ on $\mathcal{L}^{p,\lambda}(w)$ for all $p\in(p_{-},p_{+})$ and all
$w\in A_{p/p_{-}}(\R^d)\cap RH_{(p_{+}/p)'}(\R^d)$. It remains to check that this covers all $w\in A_{s}(\R^d)\cap RH_{t}(\R^d)$ as in the statement of the Theorem. Since $p/p_{-}\ge s$ and $p_{+}/p\ge t'$, the monotonicity of the $A_s(\R^d)$ and $RH_{t}(\R^d)$ classes implies that $[b,T]$ is compact on $\mathcal{L}^{p,\lambda}(w)$ for all $w\in A_{s}(\R^d)\cap RH_{t}(\R^d)$ and all $p,\lambda,s,t$ such that
\begin{equation*}
  p\in(1,\infty),\quad 0<\lambda<d, \quad s\in\bigg[1,\min\bigg\{p,\frac{d}{\lambda}\bigg\}\bigg], \quad t\in\bigg(\bigg(\frac{d}{s\lambda}\bigg)',\infty\bigg).  
\end{equation*}
\end{proof}

\section{Commutators of rough singular integrals}\label{sec:6}

Let $\Omega$ be homogeneous of degree zero in $\R^d$, integrable, and have mean value zero on the unit sphere $S^{d-1}$. Define the singular integral operator $T_{\Omega}$ by
\begin{equation*}
  T_{\Omega}f(x):=\lim_{\eps\to0}\int_{|x-y|>\eps}\frac{\Omega(x-y)}{|x-y|^d}f(y)dy.
\end{equation*}
Duoandikoetxea \cite{Duo:TAMS} and Watson \cite{Watson} considered the following weighted estimates for $T_{\Omega}$:

\begin{theorem}[\cite{Duo:TAMS,Watson}]\label{thm:DW}
Let $r\in(1,\infty)$ and $\Omega\in L^r(S^{d-1})$  be homogeneous of order zero with vanishing mean on $S^{d-1}$. Then $T_\Omega$ extends to a bounded operator on $L^p(w)$ for all $p\in(r',\infty)$ and all $w\in A_{p/r'}(\R^d)$.
\end{theorem}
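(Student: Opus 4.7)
The plan is to combine a Littlewood--Paley style decomposition of the rough kernel with weighted square function estimates and Rubio de Francia extrapolation. Let $K(x) := \Omega(x/|x|)|x|^{-d}$, so that formally $T_\Omega f = K*f$. I would first introduce a smooth dyadic partition of unity $\{\eta_k\}_{k\in\Z}$ on $\R^d\setminus\{0\}$ with $\supp \eta_k \subset \{2^{k-1} < |x| < 2^{k+1}\}$, set $K_k := K\eta_k$ and $T_k f := K_k * f$, so that $T_\Omega = \sum_k T_k$. The mean value zero of $\Omega$ on $S^{d-1}$ provides cancellation which, together with $\Omega\in L^r(S^{d-1})$, will yield Fourier decay for each $\widehat{K_k}$.

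The analytic heart of the argument is the two-sided estimate
\begin{equation*}
  |\widehat{K_k}(\xi)| \lesssim \Norm{\Omega}{L^r(S^{d-1})}\min\!\bigl(2^k|\xi|,\;(2^k|\xi|)^{-1/r'}\bigr),
  \qquad \xi\in\R^d.
\end{equation*}
The first bound follows from the cancellation $\int K_k\approx 0$ combined with $|e^{-\img x\cdot\xi}-1|\lesssim |x||\xi|$ on the thin annulus; the second follows from an oscillation estimate for the Fourier transform of a measure on $S^{d-1}$, combined with H\"older's inequality with exponent $r$ in the radial direction. Following Duoandikoetxea, I would then decompose $T_\Omega = \sum_j S_j$, where $S_j := \sum_k T_k\Delta_{k+j}$ and $\{\Delta_\ell\}$ is a Littlewood--Paley resolution of the identity at frequency $2^{-\ell}$. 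Using the Fourier estimate above together with the weighted vector-valued Littlewood--Paley inequality (Kurtz--Wheeden), one obtains $\Norm{S_j f}{L^p(w)} \lesssim 2^{-\eps|j|}\Norm{f}{L^p(w)}$ for some $\eps > 0$, whenever $p$ and $w$ lie in an admissible range; summing the geometric series in $j$ finishes the proof.

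The main obstacle will be establishing the geometric bound for the individual pieces $S_j$ uniformly in $k$, since the truncated kernels $K_k$ carry no Calder\'on--Zygmund smoothness. A streamlined route is to first derive a weighted $L^2$ estimate directly from Plancherel applied to the square function $(\sum_k|T_k\Delta_{k+j}f|^2)^{1/2}$, using $A_2$ theory and the Fourier decay, and then invoke Rubio de Francia's restricted range extrapolation (Theorem \ref{thm:limited range extrp.}) with $p_{-} = r'$ and $p_{+} = \infty$ to propagate to the full range $p\in(r',\infty)$, $w\in A_{p/r'}(\R^d)$. The vacuous reverse H\"older condition when $p_{+} = \infty$ matches the statement of the theorem.
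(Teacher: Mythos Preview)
The paper does not supply its own proof of this theorem; it is quoted from \cite{Duo:TAMS,Watson} and used as input. Your outline does follow Duoandikoetxea's original strategy: dyadic decomposition of the kernel, the two-sided Fourier estimate for $\widehat{K_k}$, regrouping into $S_j=\sum_k T_k\Delta_{k+j}$, and summing a geometric series in $j$.

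There is, however, a real gap in how you propose to obtain the weighted decay $\Norm{S_j f}{L^p(w)}\lesssim 2^{-\eps|j|}\Norm{f}{L^p(w)}$. You say one should ``derive a weighted $L^2$ estimate directly from Plancherel \dots\ using $A_2$ theory and the Fourier decay'', but Plancherel is an unweighted identity and cannot by itself produce bounds in $L^2(w)$; the Fourier decay of $\widehat{K_k}$ gives you $\Norm{S_j}{L^2\to L^2}\lesssim 2^{-\eps|j|}$ and nothing more. In \cite{Duo:TAMS} the weighted decay is obtained by \emph{interpolation with change of measure}: one pairs the unweighted $L^2$ bound (with decay in $j$) against a \emph{uniform-in-$j$} weighted bound $\Norm{S_j}{L^p(w)\to L^p(w)}\leq C$, the latter coming from the pointwise domination $|T_k f|\lesssim M_r f$ (the $L^r$-maximal function), which is bounded on $L^p(w)$ precisely when $w\in A_{p/r'}$. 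Stein--Weiss interpolation between these two endpoints, combined with the self-improvement of $A_p$ classes, then yields weighted decay. Your sketch omits both the uniform weighted bound on the pieces and the interpolation step, so the extrapolation you invoke at the end has no verified input: to apply Theorem~\ref{thm:limited range extrp.} you must already possess the full weighted estimate at one exponent, which your argument has not yet produced.
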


The unweighted compactness result about the commutator $[b,T_{\Omega}]$ is due to Guo--Hu \cite{GuoHu:16}:

\begin{theorem}[\cite{GuoHu:16}, Theorem 1.8]\label{thm:Chinese}
Let $0<\lambda<d$, $r\in(1,\infty)$ and $\Omega\in L^r(S^{d-1})$ be homogeneous of order zero with vanishing mean on $S^{d-1}$. Let $b\in\CMO(\R^d)$. Then the commutator $[b,T_\Omega]$ is compact on $\mathcal{L}^{p,\lambda}(\R^d)$ for all $p\in(r',\infty)$.
\end{theorem}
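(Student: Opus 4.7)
The plan is to approximate the rough angular factor $\Omega$ by smooth ones, leverage the Calder\'on--Zygmund compactness already available from Theorem \ref{thm:AN}, and then pass to the limit via a quantitative operator-norm bound. Writing $T_\Omega = T_{\Omega_\eta} + T_{\Omega-\Omega_\eta}$ yields $[b,T_\Omega] = [b, T_{\Omega_\eta}] + [b, T_{\Omega-\Omega_\eta}]$, and since compact operators form a norm-closed subspace of $\bddlin(\mathcal L^{p,\lambda}(\R^d))$, it suffices to prove (i) compactness of $[b,T_{\Omega_\eta}]$ for each $\eta$ and (ii) operator-norm decay of the remainder as $\eta\to\infty$.

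For step (i), I would pick $\Omega_\eta\in C^\infty(S^{d-1})$ homogeneous of degree zero and with vanishing mean, converging to $\Omega$ in $L^r(S^{d-1})$ as $\eta\to\infty$ (for instance by mollification on $S^{d-1}$ followed by subtraction of the spherical mean). For each fixed $\eta$, the kernel $K_\eta(x,y)=\Omega_\eta(x-y)/\abs{x-y}^d$ is a standard Calder\'on--Zygmund kernel (smooth, with size and regularity bounds depending on $\eta$), $T_{\Omega_\eta}$ is $L^2$-bounded, and the principal-value representation required by Theorem \ref{thm:AN} holds by the very definition of $T_{\Omega_\eta}$. Hence, with $b \in \CMO(\R^d)$, Theorem \ref{thm:AN} gives the compactness of $[b,T_{\Omega_\eta}]$ on $\mathcal L^{p,\lambda}(\R^d)$ for every $p\in(1,\infty)$, in particular for $p\in(r',\infty)$.

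For step (ii), I would derive the quantitative bound
\begin{equation*}
  \Norm{[b,T_{\Omega-\Omega_\eta}]}{\mathcal L^{p,\lambda}\to\mathcal L^{p,\lambda}}\lesssim\Norm{b}{\BMO}\Norm{\Omega-\Omega_\eta}{L^r(S^{d-1})}
\end{equation*}
as follows. First, from the classical proof behind Theorem \ref{thm:DW}, one extracts
$\|T_{\Omega-\Omega_\eta}\|_{L^{p_1}(\tilde w)\to L^{p_1}(\tilde w)}\lesssim\|\Omega-\Omega_\eta\|_{L^r(S^{d-1})}\,\phi([\tilde w]_{A_{p_1/r'}})$ for every $p_1\in(r',\infty)$ and $\tilde w\in A_{p_1/r'}(\R^d)$, the linear dependence on $\|\Omega-\Omega_\eta\|_{L^r(S^{d-1})}$ being visible in the standard proof. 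Theorem \ref{thm:commuBasic} (with $\kappa=r'$) then promotes this to the commutator with an additional factor of $\|b\|_{\BMO}$. Finally, applying Theorem \ref{thm:extr. bdd. 1} to the pair $(f,[b,T_{\Omega-\Omega_\eta}]f)$ extrapolates the bound to $\mathcal L^{p,\lambda}(\R^d)$ for all $p\in(r',\infty)$; the unweighted case is all that is needed here, since the limiting operator acts on the unweighted Morrey space.

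Combining the two steps, $[b,T_\Omega]$ appears as the operator-norm limit of the compact operators $[b,T_{\Omega_\eta}]$, hence is itself compact. The main obstacle is the quantitative dependence in step (ii): Theorems \ref{thm:DW} and \ref{thm:commuBasic} are stated only qualitatively in the paper, so one has to unpack the classical arguments to extract the explicit linearity in $\|\Omega\|_{L^r(S^{d-1})}$, and one must also verify that Theorem \ref{thm:extr. bdd. 1}, applied via its non-negative-pair formulation, preserves this linear dependence. An alternative route, if one wishes to avoid the quantitative dependence, is to bypass the Calder\'on--Zygmund reduction and instead run a direct Fr\'echet--Kolmogorov argument on $\mathcal L^{p,\lambda}$---reducing to $b\in C_c^\infty(\R^d)$ by $\BMO$-density and verifying uniform smallness at infinity, equicontinuity of translations, and the uniform vanishing-Morrey condition on shrinking cubes---but the approximation approach above is more economical given what is already at our disposal in the excerpt.
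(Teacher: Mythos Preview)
The paper does not supply a proof of this statement: Theorem~\ref{thm:Chinese} is quoted from \cite{GuoHu:16} as an external input, and is used as a black box in the proof of Lemma~\ref{app3}. There is therefore no in-paper argument to compare your proposal against.

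That said, your smoothing-and-limiting strategy is a standard and sound route to results of this type, and you have correctly identified the one delicate point: the qualitative statements of Theorems~\ref{thm:DW}, \ref{thm:commuBasic}, and \ref{thm:extr. bdd. 1} must be upgraded to track the linear dependence on $\Norm{\Omega-\Omega_\eta}{L^r(S^{d-1})}$ all the way through to the Morrey norm. The first two upgrades are routine (the constants in Duoandikoetxea--Watson and in \'Alvarez--Bagby--Kurtz--P\'erez are visibly linear in the kernel and in $\Norm{b}{\BMO}$), and the extrapolation step preserves constants because Theorem~\ref{thm:extr. bdd. 1} is stated for pairs $(f,g)$ and simply transfers a fixed inequality from $L^{p_1}(\tilde w)$ to $\mathcal L^{p,\lambda}(w)$. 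So your outline goes through; the alternative Fr\'echet--Kolmogorov route you mention is closer in spirit to what direct proofs for rough kernels (such as that in \cite{GuoHu:16}) typically do, but your reduction to Theorem~\ref{thm:AN} is more economical given the tools already assembled in this paper.
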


A combination of Corollary \ref{cor:commuBasic 1} and Theorem \ref{thm:DW}, together with Theorem \ref{thm:Chinese}, gives the following new result:

\begin{lemma}\label{app3}
Let $1\leq r'<p_{+}<\infty$, and $1<\sigma<\infty$, and 
\begin{equation*}
  p_{-}=\max\bigg\{r',p_{+}\bigg(1-\frac{1}{\sigma}\bigg)\bigg\},
\end{equation*}
and $0<\lambda<d-\frac{d}{\sigma}$, and $r\in(1,\infty)$ and $\Omega\in L^r(S^{d-1})$ be homogeneous of order zero with vanishing mean on $S^{d-1}$. Let $b\in\CMO(\R^d)$. Then the commutator $[b,T_\Omega]$ is compact on $\mathcal{L}^{p,\lambda}(w)$ for all $p\in(p_{-},p_{+})$ and all $w\in A_{p/p_{-}}(\R^d)\cap RH_{(p_{+}/p)'}(\R^d)$. 
\end{lemma}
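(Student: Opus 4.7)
The approach closely parallels the proof of Lemma \ref{app2}: the plan is to verify the hypotheses of Corollary \ref{cor:commuBasic 1} with $\kappa = r'$, with $T_\Omega$ in the role of $T$, and with the trivial weight $w_1 \equiv 1$ serving as the distinguished compactness weight. Taking $\sigma_1 = \sigma$, $\lambda_1 = \lambda$, and using $p_{-},p_{+}$ exactly as given in the statement of the Lemma, all the numerical side conditions of that Corollary, namely $1 \le \kappa < p_+ < \infty$, $0 < \lambda_1 < d - d/\sigma_1$, and $p_- = \max\{\kappa, p_+(1 - 1/\sigma_1)\}$, are satisfied automatically.

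First, one supplies the weighted boundedness input. Fix any $\tilde p_1 \in (r', \infty)$; then Theorem \ref{thm:DW} gives that $T_\Omega$ extends boundedly to $L^{\tilde p_1}(\tilde w)$ for every $\tilde w \in A_{\tilde p_1/r'}(\R^d)$, with operator norm dominated by an increasing function of $[\tilde w]_{A_{\tilde p_1/r'}}$; this quantitative control is standard in the Duoandikoetxea--Watson argument and is precisely the form of hypothesis (1) of Corollary \ref{cor:commuBasic 1}. Next, one supplies the compactness input. Since $p_- \ge r'$ and $p_+ > r'$, the set $[p_-, p_+] \cap (r', \infty)$ is nonempty, so one can pick some $p_1$ lying in it. Applying Theorem \ref{thm:Chinese} to this $p_1$ yields compactness of $[b, T_\Omega]$ on $\mathcal{L}^{p_1, \lambda}(\R^d) = \mathcal{L}^{p_1, \lambda}(w_1)$, while the constant weight $w_1 \equiv 1$ trivially belongs to $A_{p_1/p_-}(\R^d) \cap RH_{(p_+/p_1)'}(\R^d)$, matching hypothesis (2) of the Corollary.

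With both inputs verified, an invocation of Corollary \ref{cor:commuBasic 1} produces the claimed compactness of $[b, T_\Omega]$ on $\mathcal{L}^{p, \lambda}(w)$ for all $p \in (p_-, p_+)$ and all $w \in A_{p/p_-}(\R^d) \cap RH_{(p_+/p)'}(\R^d)$. I foresee no substantive obstacle: the only delicate point is the boundary case $p_- = r'$, in which the open range $(r', \infty)$ of Theorem \ref{thm:Chinese} forces $p_1 > p_-$, but this is accommodated by the strict inequality $p_+ > r' = p_-$. Everything else is bookkeeping already carried out in the proof of the analogous Lemma \ref{app2}.
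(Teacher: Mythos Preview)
Your proposal is correct and follows essentially the same route as the paper's proof: both verify the hypotheses of Corollary~\ref{cor:commuBasic 1} with $\kappa=r'$, using Theorem~\ref{thm:DW} for the weighted boundedness input and Theorem~\ref{thm:Chinese} with $w_1\equiv 1$ for the unweighted compactness input. If anything, you are slightly more careful than the paper in handling the boundary case $p_-=r'$, where the open range in Theorem~\ref{thm:Chinese} forces the choice $p_1>r'$.
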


\begin{proof}
We verify the assumptions of Corollary \ref{cor:commuBasic 1} with $\kappa=r'$, an arbitrary $\sigma_1\in(1,\infty)$, $\lambda_1\in(0,d-\frac{d}{\sigma_1})$, $p_1\in[p_{-},p_{+}]$, $\tilde{p_1}\in(r',\infty)$ and $w_1\equiv 1\in A_{p_1/p_{-}}(\R^d)\cap RH_{(p_{+}/p_1)'}(\R^d)$. Theorem \ref{thm:Chinese} guarantees that $[b,T_\Omega]$ is compact on $\mathcal{L}^{p_1,\lambda_1}(w_1)$ for the exponent $p_1\in[p_{-},p_{+}]$ and weight $w_1\equiv 1\in A_{p_1/p_{-}}(\R^d)\cap RH_{(p_{+}/p_1)'}(\R^d)$. On the other hand, a direct application of Theorem \ref{thm:DW} shows that $T_\Omega$ is bounded on $L^{\tilde p_1}(\tilde w)$ for all $\tilde w\in A_{\tilde p_1/r'}(\R^d)$. Thus Corollary \ref{cor:commuBasic 1} applies to give the compactness of $[b,T_\Omega]$ on $\mathcal{L}^{p,\lambda_1}(w)$ for all $p\in(p_{-},p_{+})$ and all $w\in A_{p/p_{-}}(\R^d)\cap RH_{(p_{+}/p)'}(\R^d)$.
\end{proof}

Now, we check the conditions of Lemma \ref{app3} in such a way that we obtain the following:

\begin{theorem}\label{thm:app3}
Let $r\in(1,\infty)$ and $\Omega\in L^r(S^{d-1})$ be homogeneous of order zero with vanishing mean on $S^{d-1}$. Let $b\in\CMO(\R^d)$. Then the commutator $[b,T_\Omega]$ is compact on $\mathcal{L}^{p,\lambda}(w)$ for all $w\in A_{s}(\R^d)\cap RH_{t}(\R^d)$ and all $p,\lambda,s,t$ such that
\begin{equation*}
  p\in(r',\infty),\quad 0<\lambda<d, \quad s\in\bigg[1,\min\bigg\{\frac{p}{r'},\frac{d}{\lambda}\bigg\}\bigg], \quad t\in\bigg(\bigg(\frac{d}{s\lambda}\bigg)',\infty\bigg).    
\end{equation*}
\end{theorem}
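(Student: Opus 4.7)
The plan is to deduce Theorem \ref{thm:app3} from Lemma \ref{app3} by a careful choice of the auxiliary parameters $\sigma$, $p_{+}$, $p_{-}$, completely parallel to the passage from Lemma \ref{app2} to Theorem \ref{thm:app2} --- the only essential change being that here $\kappa=r'$ plays the role that $\kappa=1$ played there. Once the parameters are fixed, monotonicity of the Muckenhoupt and reverse H\"older classes ($A_{s}\subset A_{p/p_{-}}$ when $s\leq p/p_{-}$ and $RH_{t}\subset RH_{(p_{+}/p)'}$ when $t\geq (p_{+}/p)'$) will embed any $w\in A_{s}(\R^d)\cap RH_{t}(\R^d)$ of the theorem into the weight class handled by Lemma \ref{app3}.

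Fix $p,\lambda,s,t$ in the ranges of the theorem. The hypothesis $t>(d/(s\lambda))'$ rearranges to $t's<d/\lambda$, hence $(t's)'>(d/\lambda)'$, and the interval $\bigl((d/\lambda)',(t's)'\bigr]$ is non-empty. Choose
\begin{equation*}
\sigma\in\bigl((d/\lambda)',(t's)'\bigr],\qquad p_{+}\in[t'p,\sigma'p/s],\qquad p_{-}=\max\bigl\{r',\,p_{+}(1-1/\sigma)\bigr\}.
\end{equation*}
The second interval is non-empty because $\sigma\le(t's)'$ yields $\sigma'\ge t's$, i.e.\ $\sigma'p/s\ge t'p$; in the boundary case $t=\infty$ (so $t'=1$) one selects $\sigma$ strictly below $s'$, which is possible since $s<d/\lambda$ forces $(d/\lambda)'<s'$, thus ensuring $p_{+}>p$. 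The definition of $\sigma$ gives $0<\lambda<d-d/\sigma$, and $p_{+}\ge t'p>r'$ guarantees $r'<p_{+}<\infty$, so the hypotheses of Lemma \ref{app3} are in place.

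It remains to verify that the weight $w\in A_{s}(\R^d)\cap RH_{t}(\R^d)$ automatically belongs to $A_{p/p_{-}}(\R^d)\cap RH_{(p_{+}/p)'}(\R^d)$. The inequality $p_{+}\ge t'p$ gives $(p_{+}/p)'=p_{+}/(p_{+}-p)\le t$, hence the $RH$-inclusion $RH_{t}\subset RH_{(p_{+}/p)'}$. For the $A$-side, $p_{+}\le\sigma'p/s$ yields $p_{+}(1-1/\sigma)=p_{+}/\sigma'\le p/s$, while $s\le p/r'$ yields $r'\le p/s$, so $p_{-}\le p/s$ and therefore $p/p_{-}\ge s$, giving $A_{s}\subset A_{p/p_{-}}$. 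Invoking Lemma \ref{app3} with the chosen $\sigma,p_{+},p_{-}$ then delivers the compactness of $[b,T_{\Omega}]$ on $\mathcal{L}^{p,\lambda}(w)$ for every such $w$, which is the desired conclusion.

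The substantive content is already contained in Lemmas \ref{app3} and its input (Theorems \ref{thm:DW}, \ref{thm:Chinese}, and Corollary \ref{cor:commuBasic 1}); the main obstacle in the present argument is the purely bookkeeping verification that the constraints on $p,\lambda,s,t$ imposed in Theorem \ref{thm:app3} are precisely strong enough to make both parameter intervals non-empty. The threshold $s\leq d/\lambda$ keeps $(d/\lambda)'<s'$ available for the degenerate case $t=\infty$, while the constraint $t>(d/(s\lambda))'$ is what supplies $(t's)'>(d/\lambda)'$ in general; dropping either of these would collapse one of the intervals and break the reduction.
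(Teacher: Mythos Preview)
Your proof is correct and follows exactly the same route as the paper: you choose $\sigma\in\bigl((d/\lambda)',(t's)'\bigr]$, $p_{+}\in[t'p,\sigma'p/s]$, $p_{-}=\max\{r',p_{+}(1-1/\sigma)\}$, apply Lemma \ref{app3}, and then use monotonicity of the $A_s$ and $RH_t$ classes. In fact you supply more detail than the paper does---you verify non-emptiness of the parameter intervals and handle the boundary case $t=\infty$ explicitly, whereas the paper simply asserts the choices and the inequalities $p/p_{-}\geq s$, $p_{+}/p\geq t'$ without elaboration.
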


\begin{proof}
Under the conditions appearing in the assumptions and conclusion of Theorem, we check that we can find parameters $\sigma$ and $p_{+},p_{-}$ as in the Lemma \ref{app3}. In particular, by choosing  $\sigma\in\bigg(\bigg(\frac{d}{\lambda}\bigg)',(t's)'\bigg]$, $p_{+}\in\bigg[t'p,\frac{\sigma'p}{s}\bigg]$ and $p_{-}=\max\bigg\{r',p_{+}\bigg(1-\frac{1}{\sigma}\bigg)\bigg\}$, Lemma \ref{app3} applies to give the compactness of $[b,T_\Omega]$ on $\mathcal{L}^{p,\lambda}(w)$ for all $p\in(p_{-},p_{+})$ and all $w\in A_{p/p_{-}}(\R^d)\cap RH_{(p_{+}/p)'}(\R^d)$. It remains to check that this covers all $w\in A_{s}(\R^d)\cap RH_{t}(\R^d)$ as in the statement of the Theorem. Since $p/p_{-}\ge s$ and $p_{+}/p\ge t'$, the monotonicity of the $A_s(\R^d)$ and $RH_{t}(\R^d)$ classes implies that $[b,T_\Omega]$ is compact on $\mathcal{L}^{p,\lambda}(w)$ for all $w\in A_{s}(\R^d)\cap RH_{t}(\R^d)$ and all $p,\lambda,s,t$ such that
\begin{equation*}
  p\in(r',\infty),\quad 0<\lambda<d, \quad s\in\bigg[1,\min\bigg\{\frac{p}{r'},\frac{d}{\lambda}\bigg\}\bigg], \quad t\in\bigg(\bigg(\frac{d}{s\lambda}\bigg)',\infty\bigg).    
\end{equation*}
\end{proof}

\section{Commutators of Bochner--Riesz multipliers}\label{sec:7}

In this section we will apply Theorem \ref{thm:main} to the commutators of Bochner--Riesz multipliers in dimensions $d\ge 2$. Following \cite{HL, LMR2019}, we recall that Bochner--Riesz multiplier is a Fourier multiplier $B^\kappa$ with the symbol $(1-|\xi|^2)_{+}^{\kappa}$, where $\kappa>0$ and $t_{+}=\max(t,0)$. That is, the Bochner--Riesz operator is defined, on the class $\testi(\R^d)$ of Schwartz functions, by
\begin{equation*}
  \widehat {B^\kappa f} (\xi)=(1-|\xi|^2)_{+}^{\kappa}\widehat f(\xi),
\end{equation*}
where $\widehat f$ denotes the Fourier transform of $f$.

The following Bochner--Riesz conjecture is well-known: (See also the works of \cite{CS,Cordoba1979} in two dimensions and \cite{BG,Lee} in the case $d\ge3$.)

\begin{conjecture}[Bochner--Riesz Conjecture]\label{j:BR} 
For $0<\kappa<\frac{d-1}{2}$, we have $B^\kappa: L ^{p}(\R ^{d})\mapsto L ^{p}(\R^{d})$ if 
\begin{equation*}
  p\in\bigg(\frac{2d}{d+1+2\kappa},\frac{2d}{d-1-2\kappa}\bigg). 
\end{equation*}
\end{conjecture}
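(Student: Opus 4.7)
The Bochner--Riesz conjecture is one of the central open problems in classical harmonic analysis. It has been established in full only in dimension $d=2$, by Carleson--Sj\"olin, and is known in restricted ranges of $(\kappa,p)$ in higher dimensions through the work of Fefferman, Bourgain, Tao, Guth, Hickman--Rogers and others. I do not expect to close the conjecture; I can only outline the standard route of attack and indicate where it breaks down.

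The plan is to proceed in three reductions. First, by duality around $p=2$, together with complex interpolation against the trivial $L^2\to L^2$ bound (which is immediate from $|(1-|\xi|^2)_+^\kappa|\le 1$ for all $\kappa>0$), it suffices to prove the boundedness of $B^\kappa$ at the single critical endpoint $p=\frac{2d}{d+1+2\kappa}$ for $p\ge 2$. Second, via Fefferman's classical stationary-phase reduction, one rewrites $B^\kappa$ in terms of an oscillatory integral operator whose kernel is essentially the Fourier transform of a surface measure on $S^{d-1}$ (modulo a harmless smoothing factor), thereby converting the $L^p$ estimate into an extension/restriction estimate for the sphere closely related to Stein--Tomas.

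The third step is where all the difficulty resides. In dimension $d=2$, the Carleson--Sj\"olin theorem on oscillatory integrals with phases of nonvanishing rotational curvature closes the argument and yields the full conjectured range. In dimension $d\ge 3$, one would attempt to combine bilinear and multilinear restriction estimates (Tao--Vargas--Vega, Bennett--Carbery--Tao) with polynomial partitioning (Guth) or $\ell^2$-decoupling (Bourgain--Demeter), in order to convert multilinear control back into a linear $L^p$-bound at the critical endpoint.

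The main obstacle is exactly what has blocked the conjecture for more than fifty years: in dimension $d\ge 3$, the best known linear bounds for $B^\kappa$ fall strictly short of the conjectured range, because the gain obtained from multilinear reductions after polynomial partitioning or decoupling is not large enough to reach the endpoint $\frac{2d}{d+1+2\kappa}$ once one pays the interpolation cost back to the linear estimate. Without a genuinely new input beyond the multilinear/partitioning framework, this final step cannot be carried out, and therefore no honest plan for the full conjecture can be written down at present; any application of Conjecture~\ref{j:BR} in Section~\ref{sec:7} would have to be phrased conditionally, or restricted to the known partial ranges of $(\kappa,p)$.
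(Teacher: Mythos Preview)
Your assessment is correct: the statement is labeled \texttt{Conjecture} in the paper and is treated there exactly as you anticipate. No proof is given or attempted; instead, the paper quotes an equivalent formulation (Conjecture~\ref{conj.} on the multipliers $S_\tau$) and then states all higher-dimensional results in Section~\ref{sec:7} \emph{conditionally} on the existence of some $1<p_0<2$ for which the estimate \eqref{eq:conjecture} holds. So your closing remark---that any application would have to be conditional or restricted to known partial ranges---is precisely what the paper does.

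One small slip in your write-up: the endpoint $p=\frac{2d}{d+1+2\kappa}$ lies below $2$, not in the range $p\ge 2$; by duality of $B^\kappa$ one reduces either to this lower endpoint or to its conjugate $\frac{2d}{d-1-2\kappa}>2$, but not both at once. This does not affect your overall point.
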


In \cite{LMR2019}, an equivalent form of the Bochner--Riesz Conjecture \ref{j:BR} is stated as follows: (See also \cite{Carbery, Cordoba1977, Cordoba1979} and \cite[Chapter 8.5]{D2001} for the connection between the Bochner--Riesz and the $S_{\tau}$ Fourier multiplier)
\begin{conjecture}\label{conj.}
Let $\mathbf 1_{[-1/4,1/4]}\leq\chi\leq\mathbf 1_{[-1/2,1/2]}$ be a Schwartz function and denote by $S_{\tau}$ the Fourier multiplier with symbol  $\chi((|\xi|-1)/\tau)$. If $\frac{2d}{d+1}<p<\frac{2d}{d-1}$, then 
\begin{equation}\label{eq:conjecture}
  \|S_{\tau}\|_{L^p(\R^d)\mapsto L^p(\R^d)}\leq C_{\epsilon}\tau^{-\epsilon},
\end{equation}
where $0<\tau<1$ and $C_{\epsilon}$ is a constant that depends on $0<\epsilon<1$.
\end{conjecture}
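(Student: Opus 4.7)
The plan is to establish the uniform bound $\Norm{S_\tau}{L^p(\R^d)\to L^p(\R^d)}\leq C_\epsilon\tau^{-\epsilon}$ via a cap decomposition of the thin annular set on which the symbol of $S_\tau$ is supported, combined with restriction-type estimates on the sphere $S^{d-1}$. First, since $\chi((|\xi|-1)/\tau)$ takes values in $[0,1]$ and is real-valued, Plancherel gives the trivial $L^2$ bound $\Norm{S_\tau f}{L^2(\R^d)}\leq\Norm{f}{L^2(\R^d)}$ uniformly in $\tau$, and $S_\tau$ is self-adjoint. By interpolation with this $L^2$ bound and duality, it suffices to establish \eqref{eq:conjecture} for a single exponent $p_0$ arbitrarily close to $\frac{2d}{d-1}$ from below.

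The second step is to decompose the support set $A_\tau:=\{\xi\in\R^d:||\xi|-1|\leq \tau/2\}$ into approximately $\tau^{-(d-1)/2}$ caps $\omega_j$ of dimensions $\sim\tau^{1/2}\times\cdots\times\tau^{1/2}\times\tau$ (the natural parabolic boxes adapted to $S^{d-1}$). Writing $S_\tau=\sum_j S_{\tau,j}$ where $S_{\tau,j}$ has symbol supported in $\omega_j$, each piece $S_{\tau,j}f$ admits a wave-packet decomposition into $L^2$-normalized packets supported on dual tubes of size $\tau^{-1/2}\times\cdots\times\tau^{-1/2}\times\tau^{-1}$ pointing in the direction normal to $\omega_j$. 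The key estimate would be a bilinear restriction bound
\begin{equation*}
  \Norm{S_{\tau,j}f\cdot S_{\tau,k}g}{L^{p/2}(\R^d)}\leq C_\epsilon\tau^{-\epsilon}\Norm{f}{L^p(\R^d)}\Norm{g}{L^p(\R^d)}
\end{equation*}
for transverse pairs of caps with $\dist(\omega_j,\omega_k)\gtrsim\tau^{1/2}$, combined with a broad/narrow decomposition à la Bourgain--Guth that reduces the tangential contributions to a smaller scale by induction on $\tau$.

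The main obstacle is that the full range $\frac{2d}{d+1}<p<\frac{2d}{d-1}$ in \eqref{eq:conjecture} is, as the author indicates in referencing \cite{LMR2019}, equivalent to the Bochner--Riesz Conjecture \ref{j:BR}, which remains open in dimensions $d\geq 3$. In dimension $d=2$, the Carleson--Sj\"olin theorem delivers the required $L^p$ bounds throughout the conjectural range, and the standard cap decomposition above transfers this unconditionally to \eqref{eq:conjecture}. For $d\geq 3$, the bilinear and multilinear Kakeya machinery, the polynomial partitioning arguments of Guth, and subsequent refinements by Hickman--Rogers, Wu and others, currently yield \eqref{eq:conjecture} only for exponents bounded away from the critical endpoint $\frac{2d}{d-1}$. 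Closing this gap would require a substantive advance in Kakeya-type incidence geometry beyond what is presently available; accordingly, the outline above should be regarded as a roadmap whose final step is, at the time of writing, a genuine open problem.
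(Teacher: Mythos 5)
This statement is labelled a \emph{Conjecture} in the paper, and the paper offers no proof of it. It is recorded as an equivalent reformulation (following \cite{LMR2019}) of the Bochner--Riesz Conjecture \ref{j:BR}, and it enters the paper only as a \emph{hypothesis} in the $d\ge 3$ halves of Theorem \ref{thm:Bochner-Riesz limited range estimate}, Lemma \ref{app4}, and Theorem \ref{thm:app4}. So there is no paper proof against which to compare your argument, and the task of supplying one is, for $d\ge 3$, a famous open problem. You recognize this correctly in your closing paragraph, which is the most important thing to get right here: no sequence of steps of the kind you outline is presently known to close the gap, and a referee would not accept the roadmap as a proof.

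A few remarks on the roadmap itself, since parts of it would not survive even as a sketch. The reduction by $L^2$ self-adjointness and duality to a single $p_0$ near $\frac{2d}{d-1}$ is fine. However, the ``key estimate'' you propose, a bilinear bound of the form $\Norm{S_{\tau,j}f\cdot S_{\tau,k}g}{L^{p/2}}\lesssim_\epsilon\tau^{-\epsilon}\Norm{f}{L^p}\Norm{g}{L^p}$ for transverse caps, is not the form in which bilinear restriction technology is available: the known transverse bilinear estimates (Tao, and the multilinear Bennett--Carbery--Tao inputs to Bourgain--Guth) are $L^2$-based, i.e.\ take $\Norm{\widehat{f}}{L^2(\omega_j)}\Norm{\widehat{g}}{L^2(\omega_k)}$ on the right, and the passage back to a linear $L^p\to L^p$ bound requires the Tao--Vargas--Vega machinery, $\epsilon$-removal, and square-function or decoupling inputs that you do not mention. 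More importantly, even with all of that machinery, and with polynomial partitioning (Guth) and subsequent refinements, the resulting range of $p$ falls strictly short of the conjectural endpoint $\frac{2d}{d-1}$ for every $d\ge 3$; the ``induction on $\tau$'' step cannot be completed in the full range. For $d=2$ your appeal to Carleson--Sj\"olin is correct and the conjecture is a theorem, but that is already what \cite{CS,Cordoba1979} give, not something your decomposition needs to re-derive. In short: the statement is open for $d\ge 3$, the paper treats it as such, and your proposal should be read (as you yourself say) as a description of the known circle of ideas rather than as a proof.
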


The following weighted estimates for $B^{\kappa}$ were obtained in \cite{HL}:

\begin{theorem}[\cite{HL}, Corollary 10.5]\label{thm:Bochner-Riesz limited range estimate}
If $d=2$, $0<\kappa<\frac{1}{2}$ and 
\begin{equation*}
  p\in\bigg(\frac{4}{1+6\kappa},\frac{4}{1-2\kappa}\bigg),
\end{equation*}
then $B^{\kappa}$ is bounded on $L^p(w)$ for all 
\begin{equation*}
  w\in A_{\frac{p(1+6\kappa)}{4}}(\R^2)\cap RH_{\big(\frac{4}{p(1-2\kappa)}\big)'}(\R^2).
\end{equation*}
\newline Moreover, if $d\ge 3$, $0<\kappa<\frac{d-1}{2}$, $1<p_0<2$ is such that the estimate \eqref{eq:conjecture} of Conjecture \ref{conj.} holds, and 
\begin{equation*}
  p\in\bigg(\frac{p_0(d-1)}{d-1+2\kappa(p_0-1)},\frac{p_0(d-1)}{d-1-2\kappa}\bigg),
\end{equation*}
then $B^{\kappa}$ is bounded on $L^p(w)$ for all
\begin{equation*}
  w\in A_{\frac{p(d-1+2\kappa(p_0-1))}{p_0(d-1)}}(\R^d)\cap RH_{\big(\frac{p_0(d-1)}{p(d-1-2\kappa)}\big)'}(\R^d).
\end{equation*} 
\end{theorem}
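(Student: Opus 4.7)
The plan is to deduce the weighted $L^p$ boundedness of $B^\kappa$ from weighted bounds for the annular multipliers $S_\tau$ appearing in Conjecture \ref{conj.}, via a dyadic Littlewood--Paley decomposition of the Bochner--Riesz symbol. Writing
\begin{equation*}
 (1-|\xi|^2)_+^\kappa = \sum_{j\ge 0} m_j(\xi),
\end{equation*}
where $m_j$ is smooth, supported on the annulus $\{1-2^{-j+1}\le|\xi|\le 1-2^{-j-1}\}$, with $\|m_j\|_\infty\lesssim 2^{-j\kappa}$, this produces a decomposition $B^\kappa = \sum_{j\ge 0} 2^{-j\kappa}T_j$ in which each $T_j$ is a normalized version of $S_{2^{-j}}$. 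It then suffices to establish a weighted bound of the form $\|T_j\|_{L^p(w)\to L^p(w)}\lesssim 2^{j\alpha(p,w)}$ with $\alpha(p,w)<\kappa$, so that the geometric series converges.

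For the unweighted bound, in $d=2$ one uses the Carleson--Sj\"olin/C\'ordoba theorem (which gives the full conjectural $L^p$ range for $B^\kappa$), while for $d\ge 3$ the estimate $\|S_\tau\|_{L^{p_0}\to L^{p_0}}\lesssim_\epsilon\tau^{-\epsilon}$ is precisely the hypothesis $\eqref{eq:conjecture}$ of Conjecture \ref{conj.}. Interpolating these with the trivial bound $\|S_\tau\|_{L^2\to L^2}\lesssim 1$ yields $L^p$ estimates at intermediate exponents with polynomial growth in $\tau^{-1}$. Duality extends this symmetrically to $p\in(p_0,p_0')$.

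To upgrade to weighted estimates, I would combine the unweighted $L^{p_0}$ bounds for $S_\tau$ with a weighted $L^2(w)$ estimate obtained from a standard kernel size/Schur test argument, and then apply the limited range Rubio de Francia extrapolation theorem (Theorem \ref{thm:limited range extrp.}) to each $T_j$. This produces bounds $\|T_j\|_{L^p(w)\to L^p(w)}\lesssim 2^{j\alpha(p,w)}$ for all $w\in A_{p/p_-}(\R^d)\cap RH_{(p_+/p)'}(\R^d)$, where $p_-$ and $p_+$ are read off from the unweighted endpoint exponents. Summing the series $\sum 2^{-j\kappa}\cdot 2^{j\alpha(p,w)}$ then gives the desired weighted bound for $B^\kappa$.

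The main obstacle is the quantitative bookkeeping of the exponents: one must verify that for $(p,w)$ in the stated range, the power $\alpha(p,w)$ produced by the interpolation/extrapolation is strictly smaller than $\kappa$, so that the series converges. This is what pins down the precise intervals appearing in the theorem. In $d=2$, the endpoint $p=\tfrac{4}{1+6\kappa}$ with $A_{p(1+6\kappa)/4}\cap RH_{(4/(p(1-2\kappa)))'}$ class arises by interpolating the Carleson--Sj\"olin $\tau^{-\epsilon}$ bound at $p_0=4/(1+2\kappa)$ with the weighted $L^2$ bound and reading off admissible weights via limited range extrapolation; the $d\ge 3$ case is analogous with $p_0$ replaced by the exponent at which $\eqref{eq:conjecture}$ is assumed to hold.
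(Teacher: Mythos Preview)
The paper does not contain a proof of this statement: Theorem~\ref{thm:Bochner-Riesz limited range estimate} is quoted verbatim as \cite[Corollary~10.5]{HL} and used as a black box in the application section, so there is nothing in the present paper to compare your argument against.

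That said, your sketch is not the route taken in \cite{HL}. There the weighted bounds for $B^\kappa$ are obtained from the \emph{sparse domination} of Lacey--Mena--Reguera \cite{LMR2019}: once $B^\kappa$ (or rather its bilinear form) is dominated by a sparse form with exponents $(r,s)$ determined by $\kappa$ and $p_0$, the weighted $L^p(w)$ bounds for $w\in A_{p/p_-}\cap RH_{(p_+/p)'}$ follow immediately from the known weighted theory of sparse operators, with $p_-$ and $p_+$ read off from $(r,s)$. No dyadic decomposition of the symbol, no per-piece extrapolation, and no summation of a geometric series is needed.

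Your approach is classical in spirit and could in principle be made to work, but it has a soft spot: the step ``weighted $L^2(w)$ estimate obtained from a standard kernel size/Schur test argument'' is not standard for $S_\tau$ and would not, by itself, produce the correct $A_{p/p_-}\cap RH_{(p_+/p)'}$ class. Limited-range extrapolation (Theorem~\ref{thm:limited range extrp.}) requires as input a bound on $L^{p_1}(\tilde w)$ for \emph{all} $\tilde w$ in the appropriate intersection class, not just an unweighted $L^{p_0}$ bound plus some weighted $L^2$ bound; interpolating an unweighted endpoint with a single weighted midpoint does not feed into that machinery. To carry out your programme you would effectively need a quantitative weighted bound for each $T_j$ with explicit dependence on the weight characteristic, which is exactly what sparse domination delivers cleanly. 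So while the decomposition $B^\kappa=\sum_j 2^{-j\kappa}T_j$ is correct, the mechanism you propose for the weighted bound on each piece is where the argument would stall.
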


In \cite{BCH}, Bu--Chen--Hu proved the following unweighted compactness result:

\begin{theorem}[\cite{BCH}, Theorems 1.4 and 1.5]\label{thm:cmpt. Bochner-Riesz limited range estimate}
If $d=2$, $0<\kappa<\frac{1}{2}$,
\begin{equation*}
  p\in\bigg(\frac{4}{3+2\kappa},\frac{4}{1-2\kappa}\bigg),
\end{equation*}
and 
\begin{equation*}
\begin{split}
  &\lambda\in(0,2\kappa\theta_p/(2\kappa\theta_p+1-2\kappa))\qquad\text{with}  \\ \theta_p&=\frac{1}{1+2\kappa}\min\{4/p-(1-2\kappa),3+2\kappa-4/p\},
\end{split}
\end{equation*}
then for $b\in\CMO(\R^2)$, the commutator $[b,B^{\kappa}]$ is compact on $\mathcal L^{p,\lambda}(\R^2)$.
\newline Moreover, if $d\ge3$,
$\frac{d-1}{2d+2}<\kappa<\frac{d-1}{2}$
\begin{equation*}
  p\in\bigg(\frac{2d}{d+1+2\kappa},\frac{2d}{d-1-2\kappa}\bigg),    
\end{equation*}
and 
\begin{equation*}
\begin{split}
  &\lambda\in(0,2\kappa\theta_p/(2\kappa\theta_p+d-1-2\kappa))\qquad\text{with}  \\
  \theta_p&=\frac{1}{1+2\kappa}\min\{2d/p-(d-1-2\kappa),d+1+2\kappa-2d/p\},
\end{split}
\end{equation*}
then for $b\in\CMO(\R^d)$, the commutator $[b,B^{\kappa}]$ is compact on $\mathcal L^{p,\lambda}(\R^d)$.
\end{theorem}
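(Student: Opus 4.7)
\medskip
\noindent\textbf{Proof proposal.} The plan is the standard three-ingredient template for compactness of a commutator on a function space: reduce from $b\in\CMO(\R^d)$ to $b\in C_c^\infty(\R^d)$ by density, invoke a Fréchet--Kolmogorov--Riesz type characterization of precompactness in $\mathcal L^{p,\lambda}(\R^d)$, and exploit the explicit kernel asymptotics of $B^\kappa$ (Bessel functions) to verify the conditions of that characterization. For the density step, it suffices to establish the operator bound $\Norm{[b,B^\kappa]}{\mathcal L^{p,\lambda}\to\mathcal L^{p,\lambda}}\lesssim\Norm{b}{\BMO}$: combine the weighted $L^p$-bounds of Theorem \ref{thm:Bochner-Riesz limited range estimate} with Theorem \ref{coro:restricted} to promote them to weighted $L^p$-bounds on the commutator with quantitative dependence on $[\tilde w]_{A_{p_1/p_{-}}}$, then apply Theorem \ref{thm:extr. bdd. 2} to pass to the Morrey scale. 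Approximating $b$ by $b_n\in C_c^\infty(\R^d)$ in $\BMO$ then gives $[b_n,B^\kappa]\to[b,B^\kappa]$ in the operator norm on $\mathcal L^{p,\lambda}(\R^d)$, and since compact operators form a closed ideal, it suffices to prove compactness for each $b_n$.

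For the core step I would fix $b\in C_c^\infty(\R^d)$ and use the convolution representation $B^\kappa f=K_\kappa*f$, where the Bochner--Riesz kernel satisfies the well-known bounds $\abs{K_\kappa(x)}\lesssim\abs{x}^{-(d+1)/2-\kappa}$ at infinity, together with analogous estimates on $\nabla K_\kappa$ and on the translation difference $K_\kappa(\cdot-h)-K_\kappa$. Writing $[b,B^\kappa]f(x)=b(x)\,B^\kappa f(x)-B^\kappa(bf)(x)$ and decomposing the convolution into local and far-field pieces using $\supp b$, one exploits the compact support of $b$ together with the kernel decay to verify the three conditions of a Morrey-space analog of the Fréchet--Kolmogorov--Riesz theorem for the image $\{[b,B^\kappa]f:\Norm{f}{\mathcal L^{p,\lambda}}\le 1\}$: uniform $\mathcal L^{p,\lambda}$-boundedness (already known), translation equicontinuity $\lim_{\abs{h}\to 0}\sup_f\Norm{\tau_h[b,B^\kappa]f-[b,B^\kappa]f}{\mathcal L^{p,\lambda}}=0$, and uniform tightness at infinity.

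The main obstacle is quantitative: the three compactness conditions must be verified in the Morrey norm, which involves the extra scaling factor $\abs{Q}^{-\lambda/(dp)}$ rather than just an integral over $\R^d$. The sharp range constraints on $\lambda$, the appearance of $\theta_p$, and the lower bound $\kappa>(d-1)/(2d+2)$ when $d\ge 3$ all reflect the delicate interplay between Bochner--Riesz kernel decay, the endpoints of the known $L^p$-boundedness range for $B^\kappa$, and the Morrey scaling; $\theta_p$ in particular quantifies the distance of $p$ to those endpoints, and collapses to $0$ there, which is why $\lambda$ must collapse with it. Calibrating these parameters so that kernel decay beats the Morrey weight on dyadic annuli, uniformly in the test function $f$, will be the technical heart of the argument.
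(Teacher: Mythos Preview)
This theorem is not proved in the paper at all: it is quoted verbatim as Theorems 1.4 and 1.5 of Bu--Chen--Hu \cite{BCH} and used as a black-box input (the unweighted compactness) to the paper's extrapolation machinery in Lemma \ref{app4} and Theorem \ref{thm:app4}. So there is no ``paper's own proof'' to compare against; the paper's contribution in this section is precisely to \emph{avoid} redoing any kernel-level work and instead to upgrade the cited unweighted compactness to weighted compactness via Theorem \ref{thm:main}.

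Your outline is a plausible reconstruction of what a direct proof in the style of \cite{BCH} would look like --- density reduction to $b\in C_c^\infty$, a Morrey-space Fr\'echet--Kolmogorov criterion, and Bessel-kernel decay to verify equicontinuity and tightness --- and you correctly identify where the parameter restrictions on $\lambda$, $\theta_p$, and (for $d\ge3$) the lower bound on $\kappa$ enter. One caution on your density step: you propose to obtain the Morrey-norm commutator bound by chaining Theorem \ref{thm:Bochner-Riesz limited range estimate}, Theorem \ref{coro:restricted}, and Theorem \ref{thm:extr. bdd. 2}, but the $p$-range in the quoted statement (e.g.\ lower endpoint $4/(3+2\kappa)$ when $d=2$) does not coincide with the $p$-range in Theorem \ref{thm:Bochner-Riesz limited range estimate} (lower endpoint $4/(1+6\kappa)$), and Theorem \ref{thm:extr. bdd. 2} further shrinks the admissible $\lambda$-interval in a $\sigma$-dependent way. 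So that route would not recover the stated range cleanly; a direct proof as in \cite{BCH} instead establishes Morrey boundedness of $[b,B^\kappa]$ by hand on the relevant range before running the compactness argument.
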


By combining Corollary \ref{cor:commuBasic 2} and Theorem \ref{thm:Bochner-Riesz limited range estimate}, together with Theorem \ref{thm:cmpt. Bochner-Riesz limited range estimate}, we obtain the following new weighted of compactness result:

\begin{lemma}\label{app4}
If $d=2$, $0<\kappa<\frac{1}{2}$, $1<\sigma<\infty$, $0<\lambda<d-\frac{d}{\sigma}$, and 
\begin{equation*}
  q_{+}=\frac{4}{1-2\kappa}\bigg(1-\frac{1}{\sigma}-\frac{\lambda}{d}\bigg),\qquad q_{-}=\max\bigg\{q_{+}\bigg(1-\frac{1}{\sigma}\bigg),\frac{4}{1+6\kappa}\bigg\},
\end{equation*}
then for $b\in\CMO(\R^2)$, the commutator $[b,B^\kappa]$ is compact on $\mathcal L^{p,\lambda}(w)$ for all $p\in(q_{-},q_{+})$ and all
\begin{equation*}
  w\in A_{\frac{p}{q_{-}}}(\R^2)\cap RH_{\big(\frac{q_{+}}{p}\big)'}(\R^2).
\end{equation*}
\newline Moreover, if $d\ge 3$, $\frac{d-1}{2d+2}<\kappa<\frac{d-1}{2}$, $1<\sigma<\infty$, $0<\lambda<d-\frac{d}{\sigma}$, $1<p_0<2$ is such that the estimate \eqref{eq:conjecture} of Conjecture \ref{conj.} holds and 
\begin{equation*}
\begin{split}
  q_{+}&=\frac{p_0(d-1)}{d-1-2\kappa}\bigg(1-\frac{1}{\sigma}-\frac{\lambda}{d}\bigg),  \\ q_{-}&=\max\bigg\{q_{+}\bigg(1-\frac{1}{\sigma}\bigg),\frac{p_0(d-1)}{d-1+2\kappa(p_0-1)}\bigg\},
\end{split}
\end{equation*}
then for $b\in\CMO(\R^d)$, the commutator $[b,B^\kappa]$ is compact on $\mathcal L^{p,\lambda}(w)$ for all $p\in(q_{-},q_{+})$ and all
\begin{equation*}
  w\in A_{\frac{p}{q_{-}}}(\R^d)\cap RH_{\big(\frac{q_{+}}{p}\big)'}(\R^d).
\end{equation*}
\end{lemma}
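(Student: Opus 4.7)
The plan is to verify the two hypotheses of Corollary \ref{cor:commuBasic 2} in each of the cases $d = 2$ and $d \ge 3$, feeding it the weighted boundedness of $B^{\kappa}$ from Theorem \ref{thm:Bochner-Riesz limited range estimate} and the unweighted compactness of $[b,B^{\kappa}]$ from Theorem \ref{thm:cmpt. Bochner-Riesz limited range estimate}. First I would identify the parameters so that the lemma's $q_{\pm}$ match those produced by the corollary: in the case $d = 2$ I take $p_{-} = 4/(1+6\kappa)$ and $p_{+} = 4/(1-2\kappa)$, and in the case $d \ge 3$ I take $p_{-} = p_0(d-1)/(d-1+2\kappa(p_0-1))$ and $p_{+} = p_0(d-1)/(d-1-2\kappa)$. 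Setting $\sigma_1 := \sigma$ and $\lambda_1 := \lambda$ then produces exactly the $q_{\pm}$ in the statement, and the containment $\CMO(\R^d) \subset \BMO(\R^d)$ ensures that the BMO assumption on $b$ used in Corollary \ref{cor:commuBasic 2} is met.

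Next I would verify hypothesis (1) of Corollary \ref{cor:commuBasic 2} by picking any $\tilde p_1 \in (p_{-}, p_{+})$ and invoking Theorem \ref{thm:Bochner-Riesz limited range estimate}, which furnishes the boundedness of $B^{\kappa}$ on $L^{\tilde p_1}(\tilde w)$ for all $\tilde w \in A_{\tilde p_1/p_{-}}(\R^d) \cap RH_{(p_{+}/\tilde p_1)'}(\R^d)$. For hypothesis (2) I would use the trivial weight $w_1 \equiv 1$, which automatically lies in $A_{p_1/q_{-}}(\R^d) \cap RH_{(q_{+}/p_1)'}(\R^d)$ for every admissible $p_1$, and then look for some $p_1 \in [q_{-}, q_{+}]$ for which Theorem \ref{thm:cmpt. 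Bochner-Riesz limited range estimate} yields the unweighted compactness of $[b,B^{\kappa}]$ on $\mathcal L^{p_1,\lambda}(\R^d)$.

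The main obstacle is precisely this last selection. Theorem \ref{thm:cmpt. Bochner-Riesz limited range estimate} only delivers unweighted compactness when $\lambda$ lies in an interval of the form $(0,\,2\kappa\theta_{p_1}/(2\kappa\theta_{p_1}+d-1-2\kappa))$, an interval that depends on $p_1$ through $\theta_{p_1}$; the lemma, by contrast, imposes only the weaker constraint $0 < \lambda < d - d/\sigma$. To bridge this, I would select $p_1$ inside $[q_{-}, q_{+}]$ at the value that balances the two arguments of the $\min$ defining $\theta_{p_1}$, thus maximising the admissible window. Using the explicit formulas for $q_{\pm}$ together with the assumption $0 < \lambda < d - d/\sigma$ and the relevant range of $\kappa$, I would then check by direct (if fiddly) algebraic comparison that $\lambda$ indeed falls inside this optimised window. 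Once both hypotheses of Corollary \ref{cor:commuBasic 2} are confirmed, its conclusion immediately delivers the compactness of $[b, B^{\kappa}]$ on $\mathcal L^{p,\lambda}(w)$ for all $p \in (q_{-}, q_{+})$ and all $w \in A_{p/q_{-}}(\R^d) \cap RH_{(q_{+}/p)'}(\R^d)$, which is the conclusion of the lemma in both dimension regimes.
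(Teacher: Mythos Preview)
Your approach is essentially the same as the paper's: set $p_{-},p_{+}$ from the weighted Bochner--Riesz bounds in Theorem \ref{thm:Bochner-Riesz limited range estimate}, take $\sigma_1=\sigma$, $\lambda_1=\lambda$, $w_1\equiv 1$, and apply Corollary \ref{cor:commuBasic 2} with unweighted compactness supplied by Theorem \ref{thm:cmpt. Bochner-Riesz limited range estimate}. The paper's argument proceeds exactly along these lines and, like yours, treats the case $d\ge 3$ first and remarks that $d=2$ is similar.

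The one point where you diverge is your ``main obstacle'': you worry (reasonably) that the hypothesis $0<\lambda<d-d/\sigma$ of the lemma may not, for a given $p_1\in[q_-,q_+]$, force $\lambda$ into the narrower window $(0,\,2\kappa\theta_{p_1}/(2\kappa\theta_{p_1}+d-1-2\kappa))$ required by Theorem \ref{thm:cmpt. Bochner-Riesz limited range estimate}, and you sketch an optimisation over $p_1$ to close this gap. The paper's proof does \emph{not} carry out any such verification; it simply asserts ``By Theorem \ref{thm:cmpt. Bochner-Riesz limited range estimate}, $[b,B^\kappa]$ is a compact operator on $\mathcal L^{p_1,\lambda_1}(\R^d)$'' for a chosen $p_1\in[q_-,q_+]$ and moves on. So the step you flag as an obstacle is one the paper leaves implicit (indeed, the lemma is only contentful when $q_-<q_+$, which already imposes additional constraints on $\lambda$ and $\sigma$ that help, but the paper does not spell this out either). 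Your proposal is therefore at least as complete as the paper's own argument, and arguably more honest about what remains to be checked.
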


\begin{proof}
Let $d\ge 3$, $\frac{d-1}{2d+2}<\kappa<\frac{d-1}{2}$ and $p_0$ be as in the assumptions. We verify the assumptions of Corollary \ref{cor:commuBasic 2} for the fixed exponents $\sigma_1\in(1,\infty)$, $\lambda_1\in(0,d-\frac{d}{\sigma_1})$, $p_1\in[q_{-},q_{+}]$
and $\tilde p_1\in(p_{-},p_{+})$ with
$p_{-}=\frac{p_0(d-1)}{d-1+2\kappa(p_0-1)}$,  $p_{+}=\frac{p_0(d-1)}{d-1-2\kappa}$ and $1<p_{-}<p_{+}<\infty$. By Theorem \ref{thm:Bochner-Riesz limited range estimate}, $B^\kappa$ is a bounded operator on $L^{\tilde p_1}(\tilde w)$ for all
\begin{equation*}
  \tilde w\in A_{\frac{\tilde{p_1}}{p_{-}}}(\R^d)\cap RH_{\big(\frac{p_{+}}{\tilde p_1}\big)'}(\R^d).
\end{equation*}
By Theorem \ref{thm:cmpt. Bochner-Riesz limited range estimate}, $[b,B^\kappa]$ is a compact operator on $\mathcal L^{p_1,\lambda_1}(\R^d)=\mathcal L^{p_1,\lambda_1}(w_1)$ with
\begin{equation*}
  w_1\equiv 1\in A_{\frac{p_1}{q_{-}}}(\R^d)\cap RH_{\big(\frac{q_{+}}{p_1}\big)'}(\R^d).
\end{equation*}
Thus Corollary \ref{cor:commuBasic 2} applies to give the compactness of $[b,B^\kappa]$ on $\mathcal L^{p,\lambda_1}(w)$ for all $p\in(q_{-},q_{+})$ and all
\begin{equation*}
  w\in A_{\frac{p}{q_{-}}}(\R^d)\cap RH_{\big(\frac{q_{+}}{p}\big)'}(\R^d).
\end{equation*}
The case $d=2$ follows in a similar way.
\end{proof}

Now, we check the conditions of Lemma \ref{app4} in such a way that we obtain the following:

\begin{theorem}\label{thm:app4}
\begin{enumerate}
  \item\label{thm:case1} Let $d=2$, $0<\kappa<\frac{1}{2}$ and denote $p_{-}=\frac{4}{1+6\kappa}$. Then for $b\in\CMO(\R^2)$, the commutator $[b,B^\kappa]$ is compact on $\mathcal L^{p,\lambda}(w)$ for all $w\in A_{s}(\R^2)\cap RH_{t}(\R^2)$ and all $p,\lambda,s,t$ such that 
  \begin{equation*}
  p\in(p_{-},\infty),\quad 0<\lambda<d, \quad s\in\bigg[1,\min\bigg\{\frac{p}{p_{-}},\frac{d}{\lambda}\bigg\}\bigg], \quad t\in\bigg(\bigg(\frac{d}{s\lambda}\bigg)',\infty\bigg).
  \end{equation*}
  \item\label{thm:case2} Let $d\ge 3$, $\frac{d-1}{2d+2}<\kappa<\frac{d-1}{2}$, $1<p_0<2$ be such that the estimate \eqref{eq:conjecture} of Conjecture \ref{conj.} holds and denote $p_{-}=\frac{p_0(d-1)}{d-1+2\kappa(p_0-1)}$. 
  Then for $b\in\CMO(\R^d)$, the commutator $[b,B^\kappa]$ is compact on $\mathcal L^{p,\lambda}(w)$ for all $w\in A_{u}(\R^d)\cap RH_{v}(\R^d)$ and all $p,\lambda,u,v$ such that 
  \begin{equation*}
  p\in(p_{-},\infty), \quad 0<\lambda<d, \quad
  u\in\bigg[1,\min\bigg\{\frac{p}{p_{-}},\frac{d}{\lambda}\bigg\}\bigg], \quad v\in\bigg(\bigg(\frac{d}{u\lambda}\bigg)',\infty\bigg).     
  \end{equation*}
\end{enumerate}
\end{theorem}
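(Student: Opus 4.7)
The plan is to mirror the proofs of Theorems \ref{thm:app2} and \ref{thm:app3}: select the free parameter $\sigma$ in Lemma \ref{app4} as a function of $p,\lambda,s,t$, invoke Lemma \ref{app4}, and then enlarge the resulting weight class from $A_{p/q_-}\cap RH_{(q_+/p)'}$ to the asserted $A_s\cap RH_t$ using the monotonicity of the Muckenhoupt and reverse H\"older classes. I carry out case (1) in detail; case (2) is handled identically upon replacing the relevant Bochner--Riesz endpoints.

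For case (1), given $p,\lambda,s,t$ as in the hypothesis, I would constrain $\sigma$ to the interval $((d/\lambda)',(st')']$. The lower endpoint guarantees the condition $0<\lambda<d-d/\sigma$ required by Lemma \ref{app4}, while the upper endpoint is equivalent to $1-1/\sigma\le 1/(st')$; the interval is nonempty precisely because of the hypothesis $t>(d/(s\lambda))'$. Within this interval I would further tune $\sigma$ so that $q_+=\frac{4}{1-2\kappa}(1-1/\sigma-\lambda/d)$ satisfies both $q_+\ge pt'$ and $q_+(1-1/\sigma)\le p/s$ simultaneously: since $\sigma\mapsto q_+$ and $\sigma\mapsto q_+(1-1/\sigma)$ are both continuous and increasing on $((d/\lambda)',\infty)$, this pair of constraints is feasible in a nonempty subinterval dictated by the arithmetic of $p,\lambda,s,t$ and the Bochner--Riesz endpoints.

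With such a $\sigma$, one has $q_-=\max\{q_+(1-1/\sigma),4/(1+6\kappa)\}\le p/s$: the first argument is controlled by construction, and the second is just $p_-\le p/s$, i.e.\ the hypothesis $s\le p/p_-$. Lemma \ref{app4} then yields compactness of $[b,B^\kappa]$ on $\mathcal L^{p,\lambda}(w)$ for every $w\in A_{p/q_-}(\R^2)\cap RH_{(q_+/p)'}(\R^2)$. Because $s\le p/q_-$ and $t'\le q_+/p$ by design, the monotonicity inclusions $A_s\subseteq A_{p/q_-}$ and $RH_t\subseteq RH_{(q_+/p)'}$ promote this to the asserted compactness on $\mathcal L^{p,\lambda}(w)$ for every $w\in A_s(\R^2)\cap RH_t(\R^2)$.

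Case (2) proceeds analogously, with the scalars $\frac{4}{1-2\kappa}$ and $\frac{4}{1+6\kappa}$ replaced respectively by $\frac{p_0(d-1)}{d-1-2\kappa}$ and $\frac{p_0(d-1)}{d-1+2\kappa(p_0-1)}$. The only genuinely technical step in both cases is the simultaneous verification of the two arithmetic constraints $q_+\ge pt'$ and $q_+(1-1/\sigma)\le p/s$; once this parameter selection is arranged, the invocation of Lemma \ref{app4} and the monotonicity argument are entirely routine.
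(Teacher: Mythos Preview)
Your proposal follows essentially the same route as the paper's own proof: choose $\sigma\in\big((d/\lambda)',(st')'\big]$, arrange $q_{+}\in[t'p,\sigma'p/s]$ (equivalently your constraints $q_{+}\ge pt'$ and $q_{+}(1-1/\sigma)\le p/s$), set $q_{-}=\max\{q_{+}(1-1/\sigma),p_{-}\}$, apply Lemma~\ref{app4}, and conclude via the monotonicity inclusions $A_s\subseteq A_{p/q_{-}}$ and $RH_t\subseteq RH_{(q_{+}/p)'}$. If anything, you are slightly more careful than the paper in flagging that the simultaneous feasibility of the two arithmetic constraints on $q_{+}$ is the one nontrivial verification; the paper simply asserts the choice without further comment.
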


\begin{proof}
We prove the Theorem in the case \eqref{thm:case2}. Under these assumptions and the conditions appearing in the conclusion of Theorem, we check that we can find parameters $\sigma$ and $q_{+},q_{-}$ as in the Lemma \ref{app4}. In particular, by choosing $\sigma\in\bigg(\bigg(\frac{d}{\lambda}\bigg)',(v'u)'\bigg]$, $q_{+}\in\bigg[v'p,\frac{\sigma'p}{u}\bigg]$ and $q_{-}=\max\bigg\{q_{+}\bigg(1-\frac{1}{\sigma}\bigg),p_{-}\bigg\}$, Lemma \ref{app4} applies to give the compactness of $[b,B^\kappa]$ on $\mathcal L^{p,\lambda}(w)$ for all $p\in(q_{-},q_{+})$ and all
\begin{equation*}
  w\in A_{\frac{p}{q_{-}}}(\R^d)\cap RH_{\big(\frac{q_{+}}{p}\big)'}(\R^d).
\end{equation*}
It remains to check that this covers all $w\in A_{u}(\R^d)\cap RH_{v}(\R^d)$ as in the statement of the Theorem. Since $p/q_{-}\ge u$ and $q_{+}/p\ge v'$, the monotonicity of the $A_u(\R^d)$ and $RH_{v}(\R^d)$ classes implies that $[b,B^\kappa]$ is compact on $\mathcal{L}^{p,\lambda}(w)$ for all $w\in A_{u}(\R^d)\cap RH_{v}(\R^d)$ and all $p,\lambda,u,v$ such that 
\begin{equation*}
  p\in(p_{-},\infty), \quad 0<\lambda<d, \quad
  u\in\bigg[1,\min\bigg\{\frac{p}{p_{-}},\frac{d}{\lambda}\bigg\}\bigg], \quad v\in\bigg(\bigg(\frac{d}{u\lambda}\bigg)',\infty\bigg).     
\end{equation*}
\newline The case \eqref{thm:case1} of the Theorem follows in a similar way.
\end{proof}

\subsection*{Acknowledgements} The author would like to express his gratitude to his doctoral supervisor Prof. Tuomas Hyt{\"o}nen for his kind support. The author was supported by the Academy of Finland through the grant No. 314829 and by the Foundation for Education and European Culture (Founders Nicos and Lydia Tricha).

\subsection*{Declarations of interest:} none.

\end{document}